\definecolor{darkred}{rgb}{0.6,0.0,0.1}
\definecolor{darkgreen}{rgb}{0,0.5,0}
\definecolor{darkblue}{rgb}{0,0,0.5}
\renewcommand{\cite}{\citet}
\definecolor{dgreen}{rgb}{0,0.5,0}
\definecolor{dblue}{rgb}{0,0,0.9}
\definecolor{dred}{rgb}{0.6,0.0,0.1}
\definecolor{dgold}{rgb}{0.5,0.3,0.0}
\definecolor{dvio}{rgb}{0.6,0.3,0.5}
\definecolor{gray}{rgb}{0.5,0.5,0.5}
\definecolor{dbraun}{rgb}{.5,0.2,0}
\newcommand{\dr}{\color{dred}}
\newtheoremstyle{mysc}
  {3pt}
  {3pt}
  {\it}
  {}
  {\color{darkred}\sc}
  {.}
  {.5em}
  {}
\newtheoremstyle{myas}
  {3pt}
  {3pt}
  {\it}
  {}
  {\color{darkblue}\sc}
  {.}
  {.5em}
  {}
\newtheoremstyle{myex}
  {10pt}
  {10pt}
  {\it}
  {}
  {\color{darkred}\sc}
  {.}
  {.5em}
  {}
\theoremstyle{mysc}\newtheorem{prop}{Proposition}[section]
\theoremstyle{mysc}\newtheorem{coro}[prop]{Corollary}
\theoremstyle{mysc}\newtheorem{theo}[prop]{Theorem}
\theoremstyle{mysc}\newtheorem{lem}[prop]{Lemma}
\theoremstyle{myas}\newtheorem{ass}{Assumption}
\theoremstyle{myex}\newtheorem{rem}{Remark}
\theoremstyle{myex}\newtheorem{illu}[rem]{Illustration}
\numberwithin{equation}{section}
\def\@fnsymbol#1{\ensuremath{\ifcase#1\or * \or 1 \or  2\or 3\or 4\or * \or \star \or  , \or
g\or h\or i\else\@ctrerr\fi}}%
\author{\begin{minipage}{.28\textwidth}\center
{\sc Jan Johannes}\thanks{Corresponding  author.} \thanks{Ensai, Campus de Ker Lann, Rue Blaise
Pascal - BP 37203, 35172 Bruz Cedex, France  and ISBA/CORE, Universit\'e
catholique de Louvain, Voie du Roman Pays 20, 1348~Louvain-la-Neuve,
Belgium,  e-mail:
\url{jan.johannes@ensai.fr}}\\[.5ex]\small CREST-Ensai and Universit\'e
catholique de Louvain\end{minipage}
\and \begin{minipage}{.28\textwidth}\center {\sc Anna Simoni}\thanks{CREST, 15 Boulevard Gabriel P\'{e}ri, 92240 Malakoff, France,
e-mail: \url{simoni.anna@gmail.com}}\\[.5ex]\small CNRS and CREST\\[3.5ex]\null\end{minipage}\and \begin{minipage}{.28\textwidth}\center
{\sc Rudolf Schenk}\thanks{Rudolf is deceased. This manuscript is
  based on a joint work while he was a Ph.D. student at ISBA, Universit\'e
catholique de Louvain.}\\[.5ex]\small Universit\'e catholique de Louvain\\\null\end{minipage}\\[-6ex]}
\date{}
\title{Adaptive  Bayesian estimation in\\ indirect Gaussian sequence space models}
\begin{document}
\maketitle
\begin{abstract}In an indirect Gaussian sequence space model lower and
  upper bounds are derived for the concentration rate of the posterior
  distribution of the parameter of interest shrinking to the parameter
  value $\TrSo$ that generates the data. While this establishes posterior
  consistency, however, the concentration rate depends on both $\TrSo$
  and a tuning parameter which enters the prior distribution. We first
  provide an oracle optimal choice of the tuning parameter, i.e.,
  optimized for each $\TrSo$ separately. The optimal choice of the
  prior distribution allows us to derive an oracle optimal
  concentration rate of the associated posterior distribution.
Moreover, for a given class of parameters and a suitable choice of the
tuning parameter, we show that the resulting uniform
  concentration rate over the given class is optimal in a minimax
  sense.  Finally, we construct a hierarchical
  prior 
  that is adaptive. This means that, given a parameter $\TrSo$ or a
  class of parameters, respectively, the posterior distribution
  contracts at the oracle rate or at the minimax rate over the
  class. Notably, the hierarchical prior does not depend neither on $\TrSo$ nor
  on the given class. Moreover, convergence of the fully data-driven Bayes
  estimator at the oracle or at the minimax rate is  established.
\end{abstract}
{\footnotesize
\begin{tabbing}
\noindent \emph{Keywords:} \=Bayesian nonparametrics, Sieve prior,
hierarchical Bayes, exact concentration rates,\\
\>oracle optimality,  minimax theory, adaptation.\\[.2ex]
\noindent\emph{AMS 2000 subject classifications:} Primary 62C10; secondary  62G05, 62G20.
\end{tabbing}}

\section{Introduction}\label{s:in}
Accounting for the fact that inverse problems are widely used in many fields of science,
there  has been over the last decades a growing interest in statistical
inverse problems (see, \textit{e.g.}, \cite{KorostelevTsybakov1993},
\cite{MairRuymgaart1996}, \cite{EvansStark2002}, \cite{KaipioSomersalo2005}, \cite{BissantzHohageMunkRuymgaart2007} and references therein). 
Mathematical statistics has paid special attention to  oracle or
  minimax optimal nonparametric estimation and adaptation in the framework of
inverse problems (see \cite{EfromovichKoltchinskii2001}, \cite{CavalierGolubevLepskiTsybakov2003},
\cite{Cavalier2008} and \cite{HoffmannReiss2008}, to name but a few). Nonparametric estimation in general requires
to choose a tuning parameter which is challenging in practise. Oracle
and minimax estimation is achieved, respectively, if the tuning
parameter is set to an optimal value which relies either on a
knowledge of the unknown parameter of interest or of certain
characteristics of it (such as smoothness). Since both the parameter and its smoothness are unknown, it
is necessary to design a feasible procedure to select the tuning
parameter that adapts to the unknown underlying function  or to its
regularity and achieves the oracle or  minimax rate. Among the  most
prominent approaches stand without doubts model selection
(cf. \cite{BarronBirgeMassart1999} and its exhaustive discussion in \cite{Massart07}),
Stein's unbiased risk estimation and its extensions (cf. \cite{CavalierGolubevPicardTsybakov2002},
\cite{CavalierGolubevPicardTsybakov2002} or
\cite{CavalierHengartner2005}),
 Lepski's method (see, e.g., \cite{Lepskij1990},
\cite{Birge2001}, \cite{EfromovichKoltchinskii2001} or \cite{Mathe2006})
or combinations of the aforementioned strategies (cf. \cite{GoldenshlugerLepski2011} and \cite{ComteJohannes2012}).
On the other hand side,  it seems natural to adopt a Bayesian point of view where the tuning parameter can be
endowed with a prior. As the theory for a general inverse problem --
with a possibly unknown or noisy operator -- is technically highly
involved, we  consider in this paper as a starting point an indirect Gaussian regression which
  is well known to be equivalent to an indirect Gaussian sequence
  space model (in a \cite{LeCam1964} sense, see, e.g., \cite{BrownLow1996a}
  for the direct case and \cite{Meister2011} for the indirect case).\\
Let $\Hspace$ be the Hilbert space of square summable real valued sequences  endowed with the usual inner
product $\Hskalar$ and associated norm $\Hnorm$. In an indirect Gaussian
sequence space model (iGSSM) one aim is to recover a parameter sequence
$\So=\suite{\So}\in\Hspace$  from a transformed version
$(\Ev_j\So_j)_{j\geq1}$ that is blurred by a Gaussian white
noise. Precisely, an observable  sequence of random variables
$(\ObSo)_{j\geq1}$, $\ObSo$ for short,
obeys an indirect Gaussian sequence space model, if
\begin{equation}
  \label{in:de:mod}
 \ObSo_j = \Ev_j\So_j + \sqrt{\ObSoNoL}\ObSoNo_j ,\qquad j\in\Nz,
\end{equation}
\noindent where $\{\ObSoNo_j\}_{j\geq1}$ are unobservable error terms,
which are independent and standard normally distributed, and
$0<\ObSoNoL<1$ is the noise level. The sequence
$\Ev=\suite{\Ev}$ represents the operator that
transforms the signal $\So$. In the particular case of a constant
sequence $\Ev$ the sequence space model is called direct while it
 is  called an indirect sequence space model if the sequence $\Ev$
 tends to zero. We assume throughout the paper that the sequence is bounded.

In this paper we adopt a Bayesian approach, where the parameter
sequence of interest $\So=(\So_j)_{j\geq1}$ itself is a realisation of a
random variable $\RvSo=(\RvSo_j)_{j\geq1}$ and the observable random
variable $\ObSo=(\ObSo_j)_{j\geq1}$ satisfies
\begin{equation}
  \label{in:de:ob}
 \ObSo_j = \Ev_j\RvSo_j + \sqrt{\ObSoNoL}\ObSoNo_j,\quad j\in\Nz
\end{equation}
with independent and standard normally distributed error terms
$\{\ObSoNo_j\}_{j\geq1}$ and noise level $0<\ObSoNoL<1$. Throughout the paper we assume that random parameters
$\{\RvSo_j\}_{j\geq1}$ and the error terms $\{\ObSoNo_j\}_{j\geq1}$
are independent. Consequently, \eqref{in:de:ob} and a specification of the prior
distribution $P_{\RvSo}$ of $\RvSo$ determine completely the joint distribution of $\ObSo$
and $\RvSo$. For a broader overview on Bayesian procedures we refer the reader to the
monograph  by \cite{Robert2007}.\\
Typical prior specifications studied in the
direct sequence space model literature are compound priors, also known as Sieve priors (see,
e.g., \cite{Zhao2000}, \cite{ShenWasserman2001} or
\cite{ArbelGayraudRousseau2013}, Gaussian series priors (cf. \cite{freedman1999}, \cite{Cox1993} or \cite{Castillo2008}), block
priors (cf. \cite{GaoZhou2014}), countable mixture of normal priors
(cf. \cite{BelitserGhosal2003}) and finite mixtures of normal and
Dirac priors (\textit{e.g.} \cite{Abramovichetal1998}). In the context
of an iGSSM,
\cite{KnapikVanderVaartVanZanten2011} and
\cite{KnapikSzaboVanderVaartVanZanten2014} consider Gaussian series
priors and  continuous mixture of Gaussian series priors, respectively. \\
By considering an iGSSM we derive in this paper theoretical properties  of a Bayes
procedure with a Sieve prior specification from a frequentist point of
view, meaning that there exists a true parameter value $\TrSo =
(\TrSo[j])_{j\geq 1}$ associated with the data generating process of
$\suite{\ObSo}$.  A broader overview of frequentist asymptotic
properties of nonparametric Bayes procedures can be found, for example,
in \cite{GhoshRamamoorthi2003}, while direct and indirect models, respectively, are considered by 
e.g.,   \cite{Zhao2000}, \cite{BelitserGhosal2003},
\cite{Castillo2008} and \cite{GaoZhou2014}, and,  e.g.,
\cite{KnapikVanderVaartVanZanten2011} and
\cite{KnapikSzaboVanderVaartVanZanten2014}. Bayesian procedures in the context of
slightly different Gaussian inverse problems and their asymptotic
properties are studied in, e.g., \cite{AgapiouLarssonStuart2013} and \cite{FlorensSimoni2010}.
However, our special attention is given to posterior consistency and optimal posterior concentration in
an oracle or minimax sense, which we
elaborate in the following.\\
In this paper we consider a sieve prior family $\{P_{\DiRvSo}\}_{\Di}$
where the prior distribution $P_{\DiRvSo}$ of the random parameter
sequence $\DiRvSo=(\DiRvSo_j)_{j\geq1}$ is Gaussian and degenerated
for all $j>m$. More precisely, the first $m$ coordinates
$\{\DiRvSo_j\}_{j=1}^{\Di}$ are independent and normally distributed
random variables while the remaining coordinates
$\{\DiRvSo_j\}_{j>\Di}$ are degenerated at a point. Note that the dimension
parameter $\Di$ plays the role of a tuning parameter. Assuming an
observation $\ObSo=(\ObSo_j)_{j\geq1}$ satisfying
$\ObSo_j=\DiRvSo_j+\sqrt{\ObSoNoL}\ObSoNo_j$, we denote by
$P_{\DiRvSo|\ObSo}$ the corresponding posterior distribution of
$\DiRvSo$ given $\ObSo$. Given a prior sub-family
$\{P_{\DiRvSo[\eDi]}\}_{\eDi}$ in dependence of the noise level
$\ObSoNoL$, our objective is the study of frequentist properties of the
associated posterior sub-family $\{P_{\DiRvSo[\eDi]|\ObSo}\}_{\eDi}$.
To be more precise, let $\TrSo$  be the realization of the random
parameter $\RvSo$ associated with the data-generating distribution and
denote by   $\Ex_{\TrSo}$ the corresponding expectation. A
quantity $\eRa$ which is up to a constant a lower and an upper bound of
the concentration of the posterior sub-family $\{P_{\DiRvSo[\eDi]|\ObSo}\}_{\eDi}$, i.e.,
\begin{equation}\label{in:de:ra}
\lim_{\ObSoNoL\to0}\Ex_{\TrSo}P_{\DiRvSo[\eDi]|\ObSo}((K)^{-1}\,
\eRa \leq \HnormV{\DiRvSo[\eDi]-\TrSo}^2\leq K\, \eRa)=1
\quad\text{with $1\leq  K<\infty$},\hfill
\end{equation}
 is called exact posterior
concentration (see, e.g., \cite{BarronBirgeMassart1999},
\cite{GhosalGhoshVanDerVaart2000} or \cite{Castillo2008} for a broader
discussion of the concept of posterior concentration). We shall
emphasise that the derivation of the posterior concentration relies strongly
on tail bounds for non-central $\chi^2$ distributions established
in \cite{Birge2001}. Moreover, if $\eRa\to0$ as $\ObSoNoL\to 0$ then the lower and
upper bound given in \eqref{in:de:ra} establish posterior consistency
and $\eRa$ is called exact posterior concentration rate.
Obviously, the exact  rate depends on the prior sub-family
$\{P_{\DiRvSo[\eDi]}\}_{\eDi}$ as well as on the
unknown parameter $\TrSo$. \\In the spirit of a frequentist oracle
approach, given a parameter $\TrSo$ we derive in this paper
a prior sub-family $\{P_{\DiRvSo[\treDi]}\}_{\treDi}$ with smallest
possible exact posterior concentration rate $\treRa$ which we
call, respectively, an oracle prior sub-family and an oracle posterior concentration rate. On the other hand side, following a minimax approach,  \cite{JohannesSchwarz2013},
for example, derive the minimax rate of convergence $\oeRa$ of the
maximal mean integrated squared error (MISE) over a given class $\cwSo$ of parameters  (introduced below).
We construct a sub-family $\{P_{\DiRvSo[\oeDi]}\}_{\oeDi}$ of prior
distributions with exact posterior concentration rate $\oeRa$
uniformly over  $\cwSo$ which does not depend on the true parameter $\TrSo$ but only on the set of possible
parameters  $\cwSo$. It is interesting to note that  in a direct GSSM
\cite{Castillo2008} establishes  up to a constant the minimax-rate as
an upper bound  of the posterior concentration, while
the derived lower bound features a logarithmic factor compared to the
minimax rate. \cite{ArbelGayraudRousseau2013}, for example,
in a direct GSSM and \cite{KnapikSzaboVanderVaartVanZanten2014} in an
indirect GSSM provide only upper bounds of the posterior
concentration rate which differ up to a logarithmic factor from the
minimax rate.
We shall emphasize, that the prior specifications we propose in this
paper lead to exact posterior concentration rates  that  are optimal
in an oracle or  minimax sense over certain classes of parameters not only in the direct model but also in the
 more general indirect model. However, both oracle and minimax sieve prior are unfeasible in
practise since they rely on the knowledge of either $\TrSo$ itself or
its smoothness.

Our main contribution in this paper is the construction of a
hierarchical prior $P_{\RvDiRvSo}$ that is adaptive. Meaning that, given a
 parameter $\TrSo\in\Hspace$ or a  classes $\cwSo\subset\Hspace$ of
parameters, the posterior distribution $P_{\RvDiRvSo|\ObSo}$ contracts,
respectively, at the oracle rate or the minimax rate over $\cwSo$
while the hierarchical prior $P_{\RvDiRvSo}$ does not rely neither on the
knowledge of $\TrSo$ nor the class $\cwSo$.  Let us briefly elaborate on
the hierarchical structure of the prior which induces an additional prior on the
tuning parameter $\Di$, i.e., $\Di$ itself is a realisation of a
random variable $\RvDi$. We construct a prior for $\RvDi$ such
  that the marginal posterior for $\RvDiRvSo$ (obtained by integrating out
  $\RvDi$ with respect to its posterior) contracts exactly at the
  oracle concentration rate. This is possible for every $\TrSo$ whose
  components differ from the components of the prior mean  infinitely many times. In addition, for
  every $\TrSo$ in the class $\cwSo$ we show that the posterior
  distribution $P_{\RvDiRvSo|\ObSo}$ contracts at least at the minimax
  rate $\oeRa$ and that the  corresponding Bayes estimate is
  minimax-optimal. Thereby, the proposed  Bayesian procedure is \emph{minimax
    adaptive} over the class $\cwSo$.

Although adaptation has attracted remarkable interest in the frequentist
literature, only few contributions are available in the Bayesian
literature on Gaussian sequence space models.  In a direct model
\cite{BelitserGhosal2003}, \cite{SzaboVanderVaartVanZanten2013},
\cite{ArbelGayraudRousseau2013} and \cite{GaoZhou2014} derive Bayesian
methods that achieve minimax adaptation while in an indirect Gaussian
sequence space model, to the best of our knowledge, only
\cite{KnapikSzaboVanderVaartVanZanten2014} has derived an adaptive
Bayesian procedure. In this paper, we extend previous results on
adaptation obtained through sieve priors to the indirect Gaussian
sequence space model. This requires a specification of the prior on
the tuning parameter $\RvDi$ different from the one used by, e.g.,
\cite{Zhao2000} and \cite{ArbelGayraudRousseau2013}. Interestingly,
our novel prior specification on $\RvDi$ improves the general results
of \cite{ArbelGayraudRousseau2013} since it allows to obtain
adaptation without a rate loss (given by a logarithmic factor) even in
the direct model. Compared to
\cite{KnapikSzaboVanderVaartVanZanten2014} our procedure relies on a
sieve prior while they use a family of Gaussian prior for $\RvSo$
that is not degenerate in any component of $\RvSo$ and where the
hyper-parameter is represented by the smoothness of the prior
variance. Their procedure is minimax-adaptive up to a logarithmic
deterioration of the minimax rate  on certain smoothness classes
for $\TrSo$ which is, instead, avoided by our
  procedure.

The rest of the paper is organised as follows. The prior scheme is
specified in Section \ref{s:mo}. In Section \ref{s:tr} we derive the
lower and upper bound of the posterior concentration, the oracle
posterior concentration rate and the minimax rate. In Section
\ref{s:ad} we introduce  a  prior distribution $P_{\RvDi}$ for the
random dimension  $\RvDi$ and we prove adaptation of the hierarchical Bayes procedure. 
The proofs are given in the appendix.\\


\section{Basic model assumptions}\label{s:mo}
Let us consider a Gaussian {prior} distribution for the   parameter
$\RvSo=(\RvSo_j)_{j\geq1}$, that is, $\{\RvSo_j\}_{j\geq1}$ are independent, normally distributed  with prior means
$(\RvSoEr_j)_{j\geq1}$ and prior variances $(\RvSoVa_j)_{j\geq1}$. 
Standard calculus  shows that the posterior distribution of  $\RvSo$
 given  $\ObSo=(\ObSo_j)_{j\geq1}$ is Gaussian, that is, given $\ObSo$,
$\{\RvSo_j\}_{j\geq1}$   are conditionally independent, normally distributed random variables  with
posterior variance
$\sigma_j:=\Var(\RvSo_j\vert\ObSo)=(\Ev_j^2\ObSoNoL^{-1}+\RvSoVa^{-1}_j)^{-1}$
and posterior mean
$ \So^{\ObSo}_j:=\Ex[\RvSo_j\vert\ObSo]=\sigma_j({\RvSoVa^{-1}_j\RvSoEr_j+\Ev_j\ObSoNoL^{-1}\ObSo_j})$, for all $j\in\Nz$.
Taking this as a starting point, we construct a sequence of hierarchical Sieve prior distributions. To be more precise,
let us denote by $\dirac_x$ the Dirac measure in the point $x$.  Given $\Di\in\Nz$, we
consider the  independent random variables $\set{\DiRvSo_j}_{j\geq1}$
with marginal distributions
\begin{equation}\label{mo:de:DiRvSo}
\DiRvSo_j\sim
\cN(\RvSoEr_j,\RvSoVa_j),\; 1\leq j\leq \Di \mbox{ and } \DiRvSo_j\sim
\dirac_{\RvSoEr_j},\; \Di<j,
\end{equation}
resulting in the degenerate  prior distribution
$P_{\DiRvSo}$. Here, we use the notation $\DiRvSo=\suite{\DiRvSo}$.
Consequently,  $\{\DiRvSo_j\}_{j\geq1}$ are conditionally independent given $\ObSo$ and their { posterior}
distribution $P_{\DiRvSo_j|\ObSo}$ is Gaussian with mean $\So^{\ObSo}_j$ and variance $\sigma_j$ for
$1\leq j\leq \Di$ while being degenerate on $\RvSoEr_j$ for $j>\Di$.\\
Let $\Ind{A}$ denote the
  indicator function which takes the value one if the condition $A$ holds true, and the value zero otherwise.
We consider the posterior mean $\hSo^\Di=\suite{\hSo^\Di}:=\Ex[\DiRvSo\vert \ObSo]$ given  for $j\geq1$ by
$\hSo_j^{\Di}:=\So^{\ObSo}_j \Ind{\set{j\leq \Di}}+
\RvSoEr_j \Ind{\set{j> \Di}}$ as Bayes estimator of $\So$. We shall
emphasize an improper specification of the prior, that is, $\RvSoEr=\suite{\RvSoEr}\equiv 0$ and $\RvSoVa=\suite{\RvSoVa}\equiv
\infty$. Obviously, in this situation
$\So^{\ObSo}=Y/\Ev=\suite{Y_j/\Ev}$ and $\PoVa=\ObSoNoL/\Ev^2=\suite{\ObSoNoL/\Ev^2}$ are the posterior mean and
variance sequences, respectively. Consequently, under the improper prior
specification, for each $\Di\in\Nz$ the posterior mean
$\hSo^\Di=\Ex[\RvSo^\Di|\ObSo]$ of $\RvSo^\Di$ corresponds to an
orthogonal projection estimator, i.e.,
$\hSo^\Di=(Y/\Ev)^m$ with   $(Y/\Ev)^m_j=Y_j/\Ev_j\Ind{\set{1\leq j\leq \Di}}$.
\\
From a  Bayesian point of view the thresholding parameter $\Di$ is a hyper-parameter and hence, we may complete
the prior specification by introducing a prior distribution on it. Consider a random
thresholding parameter $\RvDi$ taking its values in $\set{1,\dotsc,\DiMa}$ for some  $\DiMa\in\Nz$   with
prior distribution $P_{\RvDi}$. Both
$\DiMa$
and $P_{\RvDi}$ will be specified in Section
\ref{s:ad}. 
Moreover, the distribution of the random variables $\set{\ObSo_j}_{j\geq1}$ and $\{\RvDiRvSo_j\}_{j\geq1}$ conditionally on $\RvDi$ are determined by
\begin{equation*}
\ObSo_j=\Ev_j\RvDiRvSo+\sqrt{\ObSoNoL}\ObSoNo_j\quad\text{ and }\quad
\RvDiRvSo_j=\RvSoEr_j+\sqrt{\RvSoVa}_j \RvSoNo_j \Ind{\set{1\leq j\leq \RvDi}}
\end{equation*}
where $\{\ObSoNo_j,\RvSoNo_j\}_{j\geq1}$
are iid. standard normal random variables independent of
$\RvDi$. Furthermore, the posterior mean $\hSo:=\Ex[\RvDiRvSo|\ObSo]$ satisfies
$\hSo_j=\RvSoEr_j$ for $j>\DiMa$ and $\hSo_j=\RvSoEr_j\,P(1\leq \RvDi<
j\vert\ObSo) + \So_j^{\ObSo}\,P(j\leq \RvDi\leq \DiMa\vert\ObSo)$ for
all $1\leq j\leq \DiMa$. It is important to note, that the marginal posterior
distribution $P_{\RvDiRvSo|\ObSo}$ of $\RvDiRvSo=\suite{\RvDiRvSo}$
given the observation $\ObSo$ does depend on the prior specification
and the observation only, and hence it is fully data-driven. Revisiting
the improper prior specification introduced above,
the data-driven Bayes estimator equals a shrunk orthogonal
projection estimator. More precisely, we have  $\hSo_j=  P(j\leq
  \RvDi\leq \DiMa\vert\ObSo)\times Y_j/\Ev_j\Ind{\set{1\leq j\leq
      \DiMa}}$. Interestingly, rather than using the data  to select the dimension
  parameter $\Di$ in the set of possible values $\{1,\dotsc,\DiMa\}$,
  the Bayes estimator uses all components, up to $\DiMa$, shrunk by a
  weight decreasing with the index.

\section{Optimal concentration rate}\label{s:tr}
\subsection{Consistency}
Note that conditional on $\ObSo$ the random variables $\{\DiRvSo_j-\TrSo[j]\}_{j=1}^m$ are independent and
normally distributed with conditional mean $\So^{\ObSo}_j-\TrSo[j]$ and conditional variance
$\sigma_j$.
The next assertion
presents   a version of tail bounds for sums of independent squared
Gaussian random variables. It is shown in the appendix using a result
due to \cite{Birge2001} which can be shown along  the lines of the proof of Lemma 1 in \cite{LaurentLM2012}.
\begin{lem}\label{tr:le:te}
  Let $\{X_j\}_{j\geq1}$ be  independent and normally distributed r.v.\ with mean
  $\alpha_j\in\Rz$ and standard deviation $\beta_j\geq0$, $j\in\Nz$. For  $m\in\Nz$ set
  $S_{m}:=\sum_{j=1}^{m}X_j^2$ and consider  $v_m\geq \sum_{j=1}^m\beta_j^2$, $t_m\geq \max_{1\leq
    j\leq m}\beta_j^2$ and $r_m\geq\sum_{j=1}^m\alpha_j^2$.  Then for all $c\geq 0$ we have
  \begin{align}\label{tr:le:te:e1}
    &\sup_{m\geq 1}\exp\Bigl({\frac{c(c\wedge1)(v_m+ 2r_m)}{4 t_m}}\Bigr)P\big(S_m-\Ex S_m\leq -c(v_m+2r_m)\big) \leq 1;\\ \label{tr:le:te:e2}
&\sup_{m\geq 1}\exp\Bigl({\frac{c(c\wedge1)(v_m + 2r_m)}{4t_m}}\Bigr) P\big(S_m-\Ex S_m \geq \frac{3c}{2}(v_m +
    2r_m)\big)\leq1.
  \end{align}
\end{lem}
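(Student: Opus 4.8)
The plan is to reduce both \eqref{tr:le:te:e1} and \eqref{tr:le:te:e2} to the sharp one-sided deviation bounds for a single non-central $\chi^2$-type sum that are available from \cite{Birge2001} (equivalently, Lemma~1 in \cite{LaurentLM2012}). Recall that if $X_j\sim\cN(\alpha_j,\beta_j^2)$ independently, then $S_m=\sum_{j=1}^m X_j^2$ has the distribution of a weighted sum of independent non-central $\chi^2_1$ variables, with $\Ex S_m=\sum_{j=1}^m(\beta_j^2+\alpha_j^2)$. The cited tail inequality gives, for $x>0$,
\begin{equation*}
P\bigl(S_m-\Ex S_m\geq 2\sqrt{(A_m+2B_m)x}+2t_m x\bigr)\leq e^{-x},
\qquad
P\bigl(S_m-\Ex S_m\leq -2\sqrt{(A_m+2B_m)x}\bigr)\leq e^{-x},
\end{equation*}
where $A_m:=\sum_{j=1}^m\beta_j^4$, $B_m:=\sum_{j=1}^m\alpha_j^2\beta_j^2$, and $t_m\geq\max_{j\leq m}\beta_j^2$. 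Since $A_m\leq t_m\sum_j\beta_j^2\leq t_m v_m$ and $B_m\leq t_m\sum_j\alpha_j^2\leq t_m r_m$, we get $A_m+2B_m\leq t_m(v_m+2r_m)$; this is the key monotonicity step that converts the fourth-moment quantities into the coarser bounds $v_m,t_m,r_m$ that the lemma is stated with.

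For the lower tail \eqref{tr:le:te:e1}: I would set the deviation level $c(v_m+2r_m)$ equal to $2\sqrt{(A_m+2B_m)x}$ and solve for $x$, i.e. choose $x=x_m:=\dfrac{c^2(v_m+2r_m)^2}{4(A_m+2B_m)}$. Then the lower-tail inequality above yields $P(S_m-\Ex S_m\leq -c(v_m+2r_m))\leq e^{-x_m}$, and using $A_m+2B_m\leq t_m(v_m+2r_m)$ gives $x_m\geq \dfrac{c^2(v_m+2r_m)}{4t_m}\geq \dfrac{c(c\wedge1)(v_m+2r_m)}{4t_m}$ (the last step because if $c\geq1$ then $c^2\geq c(c\wedge1)$ trivially, and if $c<1$ then $c^2=c(c\wedge1)$). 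Rearranging $P(\cdots)\leq e^{-x_m}$ into $\exp(x_m)P(\cdots)\leq1$ and weakening $x_m$ downward to $\dfrac{c(c\wedge1)(v_m+2r_m)}{4t_m}$ gives exactly \eqref{tr:le:te:e1}, uniformly in $m$.

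For the upper tail \eqref{tr:le:te:e2}: with the same choice $x=x_m$ the upper deviation from Birg\'e is $2\sqrt{(A_m+2B_m)x_m}+2t_m x_m = c(v_m+2r_m)+2t_m x_m$, and I must check this is at most $\tfrac{3c}{2}(v_m+2r_m)$, i.e. that $2t_m x_m\leq \tfrac{c}{2}(v_m+2r_m)$. Substituting $x_m$ and using $A_m+2B_m\leq t_m(v_m+2r_m)$ again, $2t_m x_m = \dfrac{c^2 t_m(v_m+2r_m)^2}{2(A_m+2B_m)}\geq \dfrac{c^2(v_m+2r_m)}{2}$ — which points the wrong way. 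So for the upper tail I should instead fix the target deviation $\tfrac{3c}{2}(v_m+2r_m)$ and find the smallest $x$ making Birg\'e's bound exceed it; a clean route is to use the quadratic-in-$\sqrt{x}$ structure: $2\sqrt{(A_m+2B_m)x}+2t_m x\geq 2t_m x$, so it suffices that $2t_m x\leq\tfrac{3c}{2}(v_m+2r_m)$ together with $2\sqrt{(A_m+2B_m)x}\le $ the remainder; choosing $x_m=\dfrac{c(c\wedge1)(v_m+2r_m)}{4t_m}$ directly and bounding $2t_m x_m=\tfrac12 c(c\wedge1)(v_m+2r_m)\leq\tfrac12 c(v_m+2r_m)$ and $2\sqrt{(A_m+2B_m)x_m}\leq 2\sqrt{t_m(v_m+2r_m)\cdot\tfrac{c(c\wedge1)(v_m+2r_m)}{4t_m}}=\sqrt{c(c\wedge1)}\,(v_m+2r_m)\leq c(v_m+2r_m)$ (since $\sqrt{c(c\wedge1)}\leq c$) shows the sum is $\leq\tfrac{3c}{2}(v_m+2r_m)$, hence $P\bigl(S_m-\Ex S_m\geq\tfrac{3c}{2}(v_m+2r_m)\bigr)\leq e^{-x_m}$, which is \eqref{tr:le:te:e2}.

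The main obstacle is bookkeeping rather than depth: one must state the version of the Birg\'e/Laurent inequality with exactly the right constants and the $\max_j\beta_j^2$ prefactor, then make the two chained estimates $A_m+2B_m\leq t_m v_m+2t_m r_m$ and $\sqrt{c(c\wedge1)}\le c$, $c(c\wedge1)\le c^2$ line up so that the universal ``$\leq 1$'' form comes out with no extra constants and uniformly over $m$. Care is also needed because $\beta_j$ is allowed to be $0$, but that only makes $X_j^2=\alpha_j^2$ deterministic and is harmless in all the sums. I would present \eqref{tr:le:te:e1} in full and note \eqref{tr:le:te:e2} follows by the symmetric argument with the adjusted factor $\tfrac{3}{2}$.
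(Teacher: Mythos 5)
Your proposal is correct and follows essentially the same route as the paper: the same Birg\'e/Laurent--Massart tail inequality for non-central $\chi^2$ sums, the same key bound $\sum_j\beta_j^2(\beta_j^2+2\alpha_j^2)\leq t_m(v_m+2r_m)$, and for the upper tail the same choice $x=c(c\wedge1)(v_m+2r_m)/(4t_m)$ with the splits $2t_mx\leq\tfrac{c}{2}(v_m+2r_m)$ and $2\sqrt{\Sigma_m x}\leq c(v_m+2r_m)$. The only cosmetic difference is that for the lower tail you solve for $x$ from the deviation level (which silently assumes $A_m+2B_m>0$) whereas the paper fixes the same $x$ and checks $c(v_m+2r_m)\geq2\sqrt{\Sigma_m x}$, which handles the degenerate case automatically; this does not affect correctness.
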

\noindent A major step towards establishing a concentration rate of the
{posterior} distribution consists in finding a finite sample bound
for a fixed $\Di\in\Nz$. We express these bounds in terms of
\begin{gather*}
\gb_\Di:=\sum_{j>\Di}(\TrSo[j]-\RvSoEr_j)^2,\quad\oPoVa:=\sum_{j=1}^{\Di}\sigma_j \quad\mbox{with }\sigma_j=(\Ev_j^2\ObSoNoL^{-1}+\RvSoVa_j^{-1})^{-1};\\
\mPoVa:=\max_{1\leq j\leq \Di}\sigma_j\quad\mbox{and}\quad \gr_\Di:= \sum_{j=1}^{m}(\Ex_{\TrSo}[\So^{\ObSo}_j]-\TrSo[j])^2=\sum_{j=1}^{m}\sigma_j^2\RvSoVa^{-2}_j(\RvSoEr_j-\TrSo[j])^2.
\end{gather*}
\begin{prop}\label{tr:pr:pbm}For all $\Di\in\Nz$, for all $\ObSoNoL>0$
  and for all $ 0<c< 1/5$ we
  have
\begin{align}\label{tr:pr:pbm:e1}
& \Ex_{\TrSo}P_{\DiRvSo|\ObSo}(
\HnormV{\DiRvSo-\TrSo}^2> \gb_\Di+ 3\oPoVa + {3}\,\Di\,\mPoVa/2+4\gr_\Di)\leq 2\exp(-{\Di}/{36});\\\label{tr:pr:pbm:e2}
& \Ex_{\TrSo}P_{\DiRvSo|\ObSo}( \HnormV{\DiRvSo-\TrSo}^2< \gb_\Di+\oPoVa- 4\, c\, (\Di\,\mPoVa+\gr_\Di))\leq 2\exp(-c^2\Di/2).\hfill
\end{align}
\end{prop}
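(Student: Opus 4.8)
The plan is to condition on the observation $\ObSo$ first, reduce both inequalities to deviation bounds for the single sum $S_\Di:=\sum_{j=1}^{\Di}(\DiRvSo_j-\TrSo[j])^2$, and then apply Lemma~\ref{tr:le:te} with a deliberately \emph{padded} value of $v_\Di$. Since $\DiRvSo_j=\RvSoEr_j$ holds $P_{\DiRvSo|\ObSo}$-almost surely for $j>\Di$, one has $\HnormV{\DiRvSo-\TrSo}^2=\gb_\Di+S_\Di$, and $\Ex_{\TrSo}P_{\DiRvSo|\ObSo}(\cdot)$ is just the probability under the joint law of $(\ObSo,\DiRvSo)$; hence \eqref{tr:pr:pbm:e1} and \eqref{tr:pr:pbm:e2} read, respectively, $\Ex_{\TrSo}P_{\DiRvSo|\ObSo}\bigl(S_\Di>3\oPoVa+\tfrac32\Di\,\mPoVa+4\gr_\Di\bigr)\le 2\exp(-\Di/36)$ and $\Ex_{\TrSo}P_{\DiRvSo|\ObSo}\bigl(S_\Di<\oPoVa-4c(\Di\,\mPoVa+\gr_\Di)\bigr)\le 2\exp(-c^2\Di/2)$. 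Conditionally on $\ObSo$ the summands $X_j:=\DiRvSo_j-\TrSo[j]$ are independent $\cN(\So^{\ObSo}_j-\TrSo[j],\sigma_j)$, so with $A_\Di:=\sum_{j=1}^{\Di}(\So^{\ObSo}_j-\TrSo[j])^2$ one gets $\Ex[S_\Di\vert\ObSo]=\oPoVa+A_\Di$, and Lemma~\ref{tr:le:te} applies with $\alpha_j=\So^{\ObSo}_j-\TrSo[j]$ and $\beta_j^2=\sigma_j$.

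The decisive choice is $v_\Di=\Di\,\mPoVa$ (admissible since $\sum_{j\le\Di}\sigma_j=\oPoVa\le\Di\,\mPoVa$) in place of the sharp $v_\Di=\oPoVa$, together with $t_\Di=\mPoVa$ and $r_\Di=A_\Di$. With the sharp value \eqref{tr:le:te:e2} would yield only a bound of order $\exp(-c\,\oPoVa/\mPoVa)$, whose exponent need not grow with $\Di$; padding turns it into a genuine multiple of $\Di$. For the upper bound I apply \eqref{tr:le:te:e2} with $c=1/3$, so that $\tfrac{3c}{2}(v_\Di+2r_\Di)=\tfrac12\Di\,\mPoVa+A_\Di$ and $\tfrac{c(c\wedge1)}{4t_\Di}(v_\Di+2r_\Di)\ge\Di/36$, which gives conditionally on $\ObSo$
\begin{equation*}
P_{\DiRvSo|\ObSo}\!\left(S_\Di\ge\oPoVa+\tfrac12\Di\,\mPoVa+2A_\Di\right)\le\exp(-\Di/36).
\end{equation*}
It remains to control the random quantity $A_\Di$. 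Under the law of $\ObSo$ the variables $\So^{\ObSo}_j-\TrSo[j]$ are independent Gaussians with mean $\mu_j:=\sigma_j\RvSoVa_j^{-1}(\RvSoEr_j-\TrSo[j])$, whence $\sum_{j\le\Di}\mu_j^2=\gr_\Di$, and variance $\rho_j:=\sigma_j^2\Ev_j^2\ObSoNoL^{-1}\in[0,\sigma_j]$. Applying \eqref{tr:le:te:e2} once more, now to $A_\Di$, with $c=1/3$, $v_\Di=\Di\,\mPoVa$, $t_\Di=\mPoVa$ and $r_\Di=\gr_\Di$ (the hypotheses hold because $\sum\rho_j\le\oPoVa\le\Di\,\mPoVa$ and $\rho_j\le\mPoVa$), I obtain $\Ex_{\TrSo}\bigl[\Ind{A_\Di\ge\oPoVa+\tfrac12\Di\,\mPoVa+2\gr_\Di}\bigr]\le\exp(-\Di/36)$. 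On the complementary event one has $\oPoVa+\tfrac12\Di\,\mPoVa+2A_\Di<3\oPoVa+\tfrac32\Di\,\mPoVa+4\gr_\Di$, so splitting $\Ex_{\TrSo}P_{\DiRvSo|\ObSo}(\cdot)$ over this event and its complement and adding the two $\exp(-\Di/36)$ bounds proves \eqref{tr:pr:pbm:e1}.

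For \eqref{tr:pr:pbm:e2} the argument runs in the same way from \eqref{tr:le:te:e1}: conditionally on $\ObSo$, apply it with the same $v_\Di=\Di\,\mPoVa$, $t_\Di=\mPoVa$, $r_\Di=A_\Di$ but with constant $2c$ (admissible since $0<c<1/5$ gives both $2c<1$ and $1-4c>0$). The lower fluctuation threshold is then $\Ex[S_\Di\vert\ObSo]-2c(v_\Di+2r_\Di)=\oPoVa+(1-4c)A_\Di-2c\,\Di\,\mPoVa$, with deviation probability at most $\exp\bigl(-c^2(v_\Di+2r_\Di)/\mPoVa\bigr)\le\exp(-c^2\Di)$; since $1-4c>0$ and $A_\Di\ge0$, that threshold is bounded below deterministically by $\oPoVa-4c(\Di\,\mPoVa+\gr_\Di)$, so no separate control of $A_\Di$ is required, and taking $\Ex_{\TrSo}$ gives $\exp(-c^2\Di)\le2\exp(-c^2\Di/2)$.

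The hard part --- and what dictates the precise constants $3\oPoVa$, $\tfrac32\Di\,\mPoVa$, $4\gr_\Di$ in the statement --- is the bookkeeping around the padding step: $v_\Di=\Di\,\mPoVa$ is what makes the exponent linear in $\Di$, yet one must check that it does not push the fluctuation thresholds past what is claimed, and, for the upper bound, that the residual term $2A_\Di$ is itself absorbed by a \emph{second} application of Lemma~\ref{tr:le:te} --- which is exactly where the factor $2$ in \eqref{tr:pr:pbm:e1} comes from. Everything else is the routine Gaussian arithmetic above and the elementary inequalities $\sum\mu_j^2=\gr_\Di$, $\sum\rho_j\le\oPoVa$, $\rho_j\le\sigma_j\le\mPoVa$.
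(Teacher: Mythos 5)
Your proof is correct and takes essentially the same route as the paper's: condition on $\ObSo$, write $\HnormV{\DiRvSo-\TrSo}^2 = \gb_\Di + S_\Di$, apply Lemma~\ref{tr:le:te} conditionally with the padded choice $v_\Di = \Di\,\mPoVa$, $t_\Di=\mPoVa$, and (for the upper bound) control the random centering term $A_\Di=\sum_{j\le\Di}(\So^{\ObSo}_j-\TrSo[j])^2$ by a second application of the same lemma under $P_{\TrSo}$. The paper parametrizes the good event through an auxiliary quantity $s_\Di$ and optimizes over constants $c_1,c_2,c_3$; you arrive at the same thresholds $3\oPoVa+\tfrac32\Di\,\mPoVa+4\gr_\Di$ and exponent $\Di/36$ directly. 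The one genuine (if minor) improvement in your write-up is the lower bound: the paper again splits over $\Omega_\Di$ and pays a second exponential term, whereas you observe that after applying \eqref{tr:le:te:e1} with constant $2c$ the residual $(1-4c)A_\Di$ is nonnegative for $c<1/5$ and can be dropped deterministically, yielding the cleaner single bound $\exp(-c^2\Di)\le 2\exp(-c^2\Di/2)$ without any additional concentration step. All the bookkeeping (the identification $\mu_j=\sigma_j\RvSoVa_j^{-1}(\RvSoEr_j-\TrSo[j])$, $\rho_j=\sigma_j^2\Ev_j^2\ObSoNoL^{-1}\le\sigma_j$, and the admissibility checks $\sum\rho_j\le\oPoVa\le\Di\,\mPoVa$, $\rho_j\le\mPoVa$) is correct.
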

\noindent 
The desired convergence to zero of all the aforementioned sequences
necessitates to consider an appropriate sub-family
$\{P_{\RvSo^{\eDi}}\}_{\eDi}$  in dependence of the noise level
$\ObSoNoL$, notably introducing consequently sub-sequences $(\oPoVa[\eDi])_{\eDi\geq1},(\mPoVa[\eDi])_{\eDi\geq1}$ and
 $(\gr_{\eDi})_{\eDi\geq1}$. 
\begin{ass}\label{tr:as:pr}
There  exist constants $
0<\ObSoNoL_{\TrSy}:=\ObSoNoL_{\TrSy}{(\TrSo,\Ev,\RvSoEr,\RvSoVa)}<1 $ and   $1\leq
K:=K{(\TrSo,\Ev,\RvSoEr,\RvSoVa)}<\infty$  such
that
the Sieve sub-family
$\{P_{\RvSo^{\eDi}}\}_{\eDi}$ of  prior
 distributions satisfies the condition
 $\sup_{0<\ObSoNoL<\ObSoNoL_{\TrSy}}(\gr_{\eDi}\vee \eDi\mPoVa[\eDi])/(\gb_{\eDi}\vee\oPoVa[\eDi])\leq K$.
\end{ass}%
\noindent The following corollary can be immediately deduced from
Proposition \ref{tr:pr:pbm} and we omit its proof.%
\begin{coro}\label{tr:co:pbm} Under Assumption \ref{tr:as:pr}
  for all $0<\ObSoNoL<\ObSoNoL_{\TrSy}$ and $0<c<1/(8K)$ hold
\begin{align}\label{tr:co:pbm:e1}
& \Ex_{\TrSo}P_{\DiRvSo[\eDi]|\ObSo}(
\HnormV{\DiRvSo[\eDi]-\TrSo}^2> (4+({11}/{2})K)[\gb_{\eDi}\vee\oPoVa[\eDi]])\leq 2\exp(-\frac{\eDi}{36});\\\label{tr:co:pbm:e2}
& \Ex_{\TrSo}P_{\DiRvSo[\eDi]|\ObSo}( \HnormV{\DiRvSo[\eDi]-\TrSo}^2<(1-8\,c\,K)[\gb_{\eDi}\vee\oPoVa[\eDi]])\leq 2\exp(-c^2\eDi/2).\hfill
\end{align}
\end{coro}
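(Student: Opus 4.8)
The plan is simply to transport the finite-sample bounds of Proposition \ref{tr:pr:pbm}, specialised to $\Di=\eDi$, through Assumption \ref{tr:as:pr}, which lets us absorb the contributions $\eDi\,\mPoVa[\eDi]$ and $\gr_{\eDi}$ into the leading quantity $\gb_{\eDi}\vee\oPoVa[\eDi]$. Before doing so I would record two elementary facts used throughout. First, since $K\geq1$ we have $1/(8K)\leq 1/8<1/5$, so every $c$ with $0<c<1/(8K)$ satisfies the hypothesis $0<c<1/5$ of Proposition \ref{tr:pr:pbm}, and in addition $1-8cK>0$, so the radius appearing in \eqref{tr:co:pbm:e2} is genuinely positive. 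Second, for $0<\ObSoNoL<\ObSoNoL_{\TrSy}$ Assumption \ref{tr:as:pr} yields $\gr_{\eDi}\vee(\eDi\,\mPoVa[\eDi])\leq K\,[\gb_{\eDi}\vee\oPoVa[\eDi]]$, which is the single inequality driving both parts.

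For \eqref{tr:co:pbm:e1} I would start from \eqref{tr:pr:pbm:e1} with $\Di=\eDi$ and bound its threshold termwise: $\gb_{\eDi}\leq\gb_{\eDi}\vee\oPoVa[\eDi]$, then $3\oPoVa[\eDi]\leq 3[\gb_{\eDi}\vee\oPoVa[\eDi]]$, and $(3/2)\,\eDi\,\mPoVa[\eDi]+4\gr_{\eDi}\leq(3/2+4)\,[\gr_{\eDi}\vee(\eDi\,\mPoVa[\eDi])]\leq(11/2)K\,[\gb_{\eDi}\vee\oPoVa[\eDi]]$. Adding these gives $\gb_{\eDi}+3\oPoVa[\eDi]+(3/2)\,\eDi\,\mPoVa[\eDi]+4\gr_{\eDi}\leq(4+(11/2)K)\,[\gb_{\eDi}\vee\oPoVa[\eDi]]$, so the event in \eqref{tr:co:pbm:e1} is contained in the event in \eqref{tr:pr:pbm:e1}; monotonicity of $\Ex_{\TrSo}P_{\DiRvSo[\eDi]|\ObSo}(\cdot)$ then transfers the bound $2\exp(-\eDi/36)$.

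For \eqref{tr:co:pbm:e2} the threshold must instead be bounded from below. Using $a+b\geq a\vee b$ for $a,b\geq0$ we get $\gb_{\eDi}+\oPoVa[\eDi]\geq\gb_{\eDi}\vee\oPoVa[\eDi]$, while $\eDi\,\mPoVa[\eDi]+\gr_{\eDi}\leq 2\,[\gr_{\eDi}\vee(\eDi\,\mPoVa[\eDi])]\leq 2K\,[\gb_{\eDi}\vee\oPoVa[\eDi]]$; hence $\gb_{\eDi}+\oPoVa[\eDi]-4c(\eDi\,\mPoVa[\eDi]+\gr_{\eDi})\geq(1-8cK)\,[\gb_{\eDi}\vee\oPoVa[\eDi]]$. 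Consequently the set $\{\HnormV{\DiRvSo[\eDi]-\TrSo}^2<(1-8cK)[\gb_{\eDi}\vee\oPoVa[\eDi]]\}$ is contained in $\{\HnormV{\DiRvSo[\eDi]-\TrSo}^2<\gb_{\eDi}+\oPoVa[\eDi]-4c(\eDi\,\mPoVa[\eDi]+\gr_{\eDi})\}$, and \eqref{tr:pr:pbm:e2} supplies the bound $2\exp(-c^2\eDi/2)$, which is \eqref{tr:co:pbm:e2}.

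There is no substantial obstacle here: the argument is nothing more than monotonicity of probability under inclusion of events, combined with the elementary inequalities $a+b\geq a\vee b$ and $a+b\leq 2(a\vee b)$ and the constant bookkeeping above. The only points demanding a little care are the orientation of the inequalities in the lower-tail case --- one needs the smaller radius of \eqref{tr:co:pbm:e2} to sit inside the larger radius of \eqref{tr:pr:pbm:e2}, not the other way round --- and checking, as done above, that the range $0<c<1/(8K)$ is compatible with the hypotheses of Proposition \ref{tr:pr:pbm} and makes the lower radius positive.
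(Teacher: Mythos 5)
Your argument is correct and is precisely the ``immediate deduction'' the paper has in mind (the authors omit the proof of this corollary): apply Proposition \ref{tr:pr:pbm} with $\Di=\eDi$, use Assumption \ref{tr:as:pr} to absorb $\gr_{\eDi}\vee\eDi\mPoVa[\eDi]$ into $K[\gb_{\eDi}\vee\oPoVa[\eDi]]$, and conclude by inclusion of events; the constant bookkeeping $(1+3+\tfrac{11}{2}K)$ and $(1-8cK)$ checks out, as does the compatibility of $0<c<1/(8K)$ with the hypothesis $c<1/5$ since $K\geq1$.
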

\noindent Note that the sequence  $(
\gb_{\eDi}\vee\oPoVa[\eDi])_{\eDi\geq 1}$ generally does not converge
to zero. However, supposing that $\eDi\to\infty$ as $\ObSoNoL\to0$
then it follows from the dominated convergence
theorem that $\gb_{\eDi}=o(1)$. Hence, assuming additionally that $\oPoVa[\eDi]=o(1)$
holds true is sufficient to ensure that  $(	\gb_{\eDi}\vee\oPoVa[\eDi])_{\eDi\geq 1}$
converges to zero and it is indeed
a posterior concentration rate. The next assertion summarises this
result and we omit its elementary proof.
\begin{prop}[Posterior consistency]\label{tr:th:pc} Let Assumption
  \ref{tr:as:pr} be satisfied. If
  $\eDi\to\infty$ and $\oPoVa[\eDi]=o(1)$  as $\ObSoNoL\to0$, then
  \begin{equation*}
\lim_{\ObSoNoL\to0}\Ex_{\TrSo}P_{\DiRvSo[\eDi]|\ObSo}\big(  (10K)^{-1}[\gb_{\eDi}\vee\oPoVa[\eDi]]\leq\HnormV{\DiRvSo[\eDi]-\TrSo}^2\leq 10K[\gb_{\eDi}\vee\oPoVa[\eDi]]\big)=1.
\end{equation*}
\end{prop}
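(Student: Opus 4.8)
The plan is to obtain Proposition~\ref{tr:th:pc} as an almost immediate consequence of Corollary~\ref{tr:co:pbm} together with the two additional hypotheses $\eDi\to\infty$ and $\oPoVa[\eDi]=o(1)$. First I would fix the two one-sided tail bounds from \eqref{tr:co:pbm:e1}--\eqref{tr:co:pbm:e2}. The upper tail bound \eqref{tr:co:pbm:e1} holds for all $0<\ObSoNoL<\ObSoNoL_{\TrSy}$ with the explicit constant $4+(11/2)K$, which is bounded above by $10K$ since $K\geq1$. For the lower tail bound \eqref{tr:co:pbm:e2} I would simply commit to a concrete admissible value of $c$, e.g.\ $c=1/(16K)$, which is strictly less than $1/(8K)$; this yields $1-8cK=1/2\geq (10K)^{-1}$, again using $K\geq1$. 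Thus, writing $\eRa:=\gb_{\eDi}\vee\oPoVa[\eDi]$, for every $0<\ObSoNoL<\ObSoNoL_{\TrSy}$ we get
\[
\Ex_{\TrSo}P_{\DiRvSo[\eDi]|\ObSo}\big(\HnormV{\DiRvSo[\eDi]-\TrSo}^2> 10K\,\eRa\big)\leq 2\exp(-\eDi/36),\qquad
\Ex_{\TrSo}P_{\DiRvSo[\eDi]|\ObSo}\big(\HnormV{\DiRvSo[\eDi]-\TrSo}^2< (10K)^{-1}\eRa\big)\leq 2\exp(-c^2\eDi/2).
\]

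Next I would pass to the limit $\ObSoNoL\to0$. By a union bound, the probability of the complement of the two-sided event in the statement is at most $2\exp(-\eDi/36)+2\exp(-c^2\eDi/2)$; since $\eDi\to\infty$ as $\ObSoNoL\to0$, this upper bound tends to $0$, which gives exactly the claimed limit. Here one only needs to note that the bound is valid for all sufficiently small $\ObSoNoL$ (namely $\ObSoNoL<\ObSoNoL_{\TrSy}$), which is all that is required to take the limit.

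The only genuine content beyond bookkeeping is the remark made just before the statement, justifying that $\eRa\to0$: if $\eDi\to\infty$ then $\gb_{\eDi}=\sum_{j>\eDi}(\TrSo[j]-\RvSoEr_j)^2=o(1)$ because the tail of a convergent series vanishes (equivalently by dominated convergence on the counting measure), using $\TrSo-\RvSoEr\in\Hspace$; combined with the hypothesis $\oPoVa[\eDi]=o(1)$ this gives $\eRa=\gb_{\eDi}\vee\oPoVa[\eDi]=o(1)$, so that $\eRa$ is a bona fide \emph{rate}. I do not expect any real obstacle here: the whole argument is a matter of choosing one explicit $c$, absorbing constants into $10K$ via $K\geq1$, and invoking the divergence of $\eDi$ to kill the two exponential tails; this is why the authors describe the proof as elementary and omit it.
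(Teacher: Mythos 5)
Your proof is correct and is exactly the argument the paper has in mind: the authors omit the proof as elementary, having just stated Corollary \ref{tr:co:pbm}, and your choice $c=1/(16K)$ together with the bounds $4+(11/2)K\leq 10K$ and $1-8cK=1/2\geq (10K)^{-1}$ (both using $K\geq1$), followed by a union bound and $\eDi\to\infty$, is precisely the intended route.
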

\noindent The last assertion shows that $(\gb_{\eDi}\vee\oPoVa[\eDi])_{\eDi\geq1}$ is up to a constant a lower and upper
bound of the concentration rate associated with the Sieve sub-family
$\{P_{\RvSo^{\eDi}}\}_{\eDi}$ of prior distributions.  It is easily
shown that it also provides an upper bound of the frequentist risk of
the associated Bayes estimator.
\begin{prop}[Bayes estimator consistency]\label{tr:pr:bc} Let the assumptions of Proposition \ref{tr:th:pc} be
  satisfied. Consider the Bayes estimator $\hSo^{\eDi}:=\Ex[\DiRvSo[\eDi]|\ObSo]$ then
  \begin{equation*}
\Ex_{\TrSo} \HnormV{\hSo^{\eDi}-\TrSo}^2 \leq (2+K)[\gb_{\eDi}\vee\oPoVa[\eDi]]
\end{equation*}
and consequently $\Ex_{\TrSo} \HnormV{\hSo^{\eDi}-\TrSo}^2=o(1)$ as $\ObSoNoL\to0$.
\end{prop}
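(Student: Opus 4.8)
The plan is to reduce the claim to an elementary, purely deterministic bias--variance computation for the posterior mean, which here is nothing but a shrunk projection-type estimator, and then to feed in Assumption~\ref{tr:as:pr} in exactly the way it enters Corollary~\ref{tr:co:pbm}. First I would use the explicit coordinatewise form $\hSo_j^{\eDi}=\So^{\ObSo}_j\Ind{\set{j\leq \eDi}}+\RvSoEr_j\Ind{\set{j>\eDi}}$ together with Parseval's identity to write
\begin{equation*}
\Ex_{\TrSo}\HnormV{\hSo^{\eDi}-\TrSo}^2=\sum_{j=1}^{\eDi}\Ex_{\TrSo}\big(\So^{\ObSo}_j-\TrSo[j]\big)^2+\sum_{j>\eDi}\big(\RvSoEr_j-\TrSo[j]\big)^2 ,
\end{equation*}
whose second sum is by definition $\gb_{\eDi}$.

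For each of the first $\eDi$ coordinates I would apply the bias--variance decomposition $\Ex_{\TrSo}(\So^{\ObSo}_j-\TrSo[j])^2=(\Ex_{\TrSo}[\So^{\ObSo}_j]-\TrSo[j])^2+\Ex_{\TrSo}(\So^{\ObSo}_j-\Ex_{\TrSo}[\So^{\ObSo}_j])^2$. Inserting $\ObSo_j=\Ev_j\TrSo[j]+\sqrt{\ObSoNoL}\ObSoNo_j$ into $\So^{\ObSo}_j=\sigma_j(\RvSoVa_j^{-1}\RvSoEr_j+\Ev_j\ObSoNoL^{-1}\ObSo_j)$ and using $\sigma_j^{-1}=\Ev_j^2\ObSoNoL^{-1}+\RvSoVa_j^{-1}$, the bias simplifies to $\Ex_{\TrSo}[\So^{\ObSo}_j]-\TrSo[j]=\sigma_j\RvSoVa_j^{-1}(\RvSoEr_j-\TrSo[j])$, so that $\sum_{j=1}^{\eDi}(\Ex_{\TrSo}[\So^{\ObSo}_j]-\TrSo[j])^2=\gr_{\eDi}$ --- which is precisely how $\gr_\Di$ was defined. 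For the variance one gets $\Ex_{\TrSo}(\So^{\ObSo}_j-\Ex_{\TrSo}[\So^{\ObSo}_j])^2=\sigma_j^2\Ev_j^2\ObSoNoL^{-1}=\sigma_j(1-\sigma_j\RvSoVa_j^{-1})\leq\sigma_j$, hence the total variance over $1\leq j\leq\eDi$ is bounded by $\sum_{j=1}^{\eDi}\sigma_j=\oPoVa[\eDi]$. Putting the three pieces together yields the finite-sample bound $\Ex_{\TrSo}\HnormV{\hSo^{\eDi}-\TrSo}^2\leq\gb_{\eDi}+\oPoVa[\eDi]+\gr_{\eDi}$.

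It then only remains to invoke Assumption~\ref{tr:as:pr}: for $0<\ObSoNoL<\ObSoNoL_{\TrSy}$ we have $\gr_{\eDi}\leq\gr_{\eDi}\vee\eDi\mPoVa[\eDi]\leq K[\gb_{\eDi}\vee\oPoVa[\eDi]]$, while trivially $\gb_{\eDi}+\oPoVa[\eDi]\leq 2[\gb_{\eDi}\vee\oPoVa[\eDi]]$; adding these two estimates gives $\Ex_{\TrSo}\HnormV{\hSo^{\eDi}-\TrSo}^2\leq(2+K)[\gb_{\eDi}\vee\oPoVa[\eDi]]$, which is the asserted inequality. Finally, under the hypotheses of Proposition~\ref{tr:th:pc} one has $\eDi\to\infty$, hence $\gb_{\eDi}=o(1)$ by dominated convergence, together with $\oPoVa[\eDi]=o(1)$, so that $\gb_{\eDi}\vee\oPoVa[\eDi]=o(1)$ and the consistency $\Ex_{\TrSo}\HnormV{\hSo^{\eDi}-\TrSo}^2=o(1)$ follows. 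I do not expect any genuine obstacle: the whole argument is deterministic bookkeeping, the only mildly non-routine step being the identity $\sigma_j^2\Ev_j^2\ObSoNoL^{-1}=\sigma_j(1-\sigma_j\RvSoVa_j^{-1})$, which lets one dominate the variance term by $\sigma_j$ uniformly in $j$ and in $\ObSoNoL$.
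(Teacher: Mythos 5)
Your proof is correct and follows essentially the same route as the paper's: the same decomposition into tail bias $\gb_{\eDi}$, squared bias $\gr_{\eDi}$, and a variance term dominated by $\oPoVa[\eDi]$ via $\sigma_j\Ev_j^2\ObSoNoL^{-1}\leq 1$, followed by the same application of Assumption~\ref{tr:as:pr}. No gaps.
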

\noindent The previous results are obtained under Assumption \ref{tr:as:pr}. However, it may be difficult to verify whether  a
given sub-family of priors  $\{P_{\RvSo^{\eDi}}\}_{\eDi}$  satisfies such an assumption. 
Therefore, we now introduce an assumption which states a more
precise requirement on the prior variance and that can be more easily
verified. Define for $j,m\in\Nz$
\begin{equation*}
\Evs_j:=\Ev_j^{-2},\quad \mEvs:=\max_{1\leq j\leq
  \Di}\Evs_{j},\quad\oEvs:=m^{-1}\sum_{j=1}^\Di\Evs_{j}\quad\text{ and
}\quad \eRa^{\Di}:=[\gb_{\Di}\vee \ObSoNoL\,\Di\,\oEvs[\Di]].
\end{equation*}
\begin{ass}\label{tr:as:pv} Let $\DiMa:=\max\{1\leq m\leq \gauss{\ObSoNoL^{-1}}:\ObSoNoL\mEvs\leq \Evs_1\}$.
There exists a finite constant $d>0$ such that
$\RvSoVa_j\geq d[\ObSoNoL^{1/2}\Evs_j^{1/2} \vee\ObSoNoL\Evs_j]$ for all $1\leq j \leq \DiMa$ and for all $\ObSoNoL\in(0,1)$.
\end{ass}
\noindent Note that in the last Assumption the defining set of $\DiMa$ is not empty, since
$\ObSoNoL \mEvs[1]\leq\Evs_1$ for all $\ObSoNoL\leq 1$.
Moreover,  under Assumption \ref{tr:as:pv},  by some elementary algebra, it is readily verified
 for all $1\leq j \leq \DiMa$ that
\begin{equation*}
  1 \leq \ObSoNoL\Evs_j/\PoVa[j]\leq (1+1/d) \quad\text{
    and } \quad\PoVa[j]/\RvSoVa_j \leq (1\wedge d^{-1}\ObSoNoL^{1/2} \Evs_j^{1/2})
\end{equation*}
which in turn implies  for all $1\leq \Di \leq \DiMa$ that
\begin{equation*}
\gr_{\Di}\leq
d^{-2}\HnormV{\RvSoEr-\TrSo}^2\ObSoNoL\,\mEvs[\Di],\;  1\leq  \ObSoNoL\,
\Di\,\mEvs[\Di](\Di\mPoVa)^{-1}\;\text{and}\; 1\leq\ObSoNoL \,\Di \oEvs[\Di]\,(\oPoVa[\Di])^{-1} \leq(1+1/d).
\end{equation*}
We will use these elementary bounds in the sequel without further reference.
Returning  to the Sieve sub-family $\{P_{\RvSo^{\eDi}}\}_{\eDi}$ of prior distributions, if in addition
to Assumption \ref{tr:as:pv}  there exists a  constant
$1\leq L:=L(\TrSo,\Ev,\RvSoEr)<\infty$ such that
\begin{equation}\label{tr:as:pv:ac}
\sup_{0<\ObSoNoL<1}
\ObSoNoL\,\eDi\,\mEvs[\eDi](\eRa^{\eDi})^{-1}\leq L
\end{equation}
and $\eRa^{\eDi}=o(1)$ as $\ObSoNoL\to0$ hold true, then the sub-family $\{P_{\RvSo^{\eDi}}\}_{\eDi}$ satisfies  Assumption \ref{tr:as:pr}  with $K:=((1+d^{-1})\vee d^{-2}\HnormV{\TrSo-\RvSoEr}^2)L$. Indeed, if $\eRa^{\eDi}=o(1)$ and, hence $\eRa[{\ObSoNoL}]^{\Di_{\ObSoNoL}}\leq\Evs_1/L$ for all
  $\ObSoNoL\in(0,\ObSoNoL_{\TrSy})$, then $\eDi\leq\DiMa$ holds true for all
  $\ObSoNoL\in(0,\ObSoNoL_{\TrSy})$  since
  $\ObSoNoL\eDi\Evs_1\leq\ObSoNoL\eDi\mEvs[\eDi]\leq L
  \eRa[{\ObSoNoL}]^{\Di_{\ObSoNoL}}\leq \Evs_1$ and thus  $\eDi\leq
  \gauss{\ObSoNoL^{-1}}$  and  $\ObSoNoL \mEvs[\eDi]\leq
  \Evs_1$. 
  In other words, for all $\ObSoNoL\in(0,\ObSoNoL_{\TrSy})$ we can
  apply Assumption \ref{tr:as:pv} and  the claim follows  taking into account
  the aforementioned  elementary bounds. Note further that the constant $K$ does not depend  on the prior variances
$\RvSoVa$ but only on the constant $d$ given by Assumption
\ref{tr:as:pv}. The  next assertion follows immediately from Corollary
    \ref{tr:co:pbm} and we omit its proof.
\begin{coro}\label{tr:co:pb}Under  Assumption \ref{tr:as:pv} consider a sub-family $\{P_{\RvSo^{\eDi}}\}_{\eDi}$  such
  that \eqref{tr:as:pv:ac} and $\eRa^{\eDi}=o(1)$ as $\ObSoNoL\to0$
  are satisfied, then there exists $\ObSoNoL_{\TrSy}\in(0,1)$ such that  for
  all $0<\ObSoNoL<\ObSoNoL_{\TrSy}$ and $0<c<1/(8K)$ with $K=((1+d^{-1})\vee
  d^{-2}\HnormV{\TrSo-\RvSoEr}^2)L$ hold
\begin{align}\label{tr:co:pb:e1}
& \Ex_{\TrSo}P_{\DiRvSo[\eDi]|\ObSo}\big(
\HnormV{\DiRvSo[\eDi]-\TrSo}^2> (4+({11}/{2})K)\eRa^{\eDi}\big)\leq 2\exp(-\frac{\eDi}{36});\\\label{tr:co:pb:e2}
& \Ex_{\TrSo}P_{\DiRvSo[\eDi]|\ObSo}\big( \HnormV{\DiRvSo[\eDi]-\TrSo}^2<(1-8\,c\,K)(1+d^{-1})^{-1}\eRa^{\eDi}\big)\leq 2\exp(-c^2\eDi/2).\hfill
\end{align}
\end{coro}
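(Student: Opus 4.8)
The plan is to deduce this corollary from Corollary~\ref{tr:co:pbm} by verifying that, under the stated hypotheses, the sub-family $\{P_{\RvSo^{\eDi}}\}_{\eDi}$ satisfies Assumption~\ref{tr:as:pr} and by converting the bound $[\gb_{\eDi}\vee\oPoVa[\eDi]]$ appearing there into the more explicit bound $\eRa^{\eDi}=[\gb_{\eDi}\vee\ObSoNoL\,\eDi\,\oEvs[\eDi]]$. First I would fix $\ObSoNoL_{\TrSy}\in(0,1)$ small enough that $\eRa^{\eDi}\leq\Evs_1/L$ for all $0<\ObSoNoL<\ObSoNoL_{\TrSy}$; this is possible because $\eRa^{\eDi}=o(1)$. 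For such $\ObSoNoL$ the chain of inequalities $\ObSoNoL\,\eDi\,\Evs_1\leq\ObSoNoL\,\eDi\,\mEvs[\eDi]\leq L\,\eRa^{\eDi}\leq\Evs_1$ (using \eqref{tr:as:pv:ac}) shows $\eDi\leq\gauss{\ObSoNoL^{-1}}$ and $\ObSoNoL\,\mEvs[\eDi]\leq\Evs_1$, hence $\eDi\leq\DiMa$, so that Assumption~\ref{tr:as:pv} applies at every index $1\leq j\leq\eDi$. This is exactly the argument already sketched in the paragraph preceding the corollary, so I would simply record it.

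Next I would invoke the three elementary bounds that follow from Assumption~\ref{tr:as:pv}: namely $\gr_{\eDi}\leq d^{-2}\HnormV{\RvSoEr-\TrSo}^2\ObSoNoL\,\mEvs[\eDi]$, $\ObSoNoL\,\eDi\,\mEvs[\eDi]\geq\eDi\,\mPoVa$, and $\oPoVa[\eDi]\leq\ObSoNoL\,\eDi\,\oEvs[\eDi]\leq(1+d^{-1})\oPoVa[\eDi]$. Combining the first two with \eqref{tr:as:pv:ac} gives $\gr_{\eDi}\vee\eDi\,\mPoVa\leq(d^{-2}\HnormV{\RvSoEr-\TrSo}^2\vee 1)\,\ObSoNoL\,\eDi\,\mEvs[\eDi]\leq K\,\eRa^{\eDi}$ with $K$ as in the statement, while from the third bound $\eRa^{\eDi}\leq(1+d^{-1})[\gb_{\eDi}\vee\oPoVa[\eDi]]$, so that $\gr_{\eDi}\vee\eDi\,\mPoVa\leq K[\gb_{\eDi}\vee\oPoVa[\eDi]]$, i.e.\ Assumption~\ref{tr:as:pr} holds with this $K$ (after possibly enlarging $K$ by the harmless factor implicit in $L\geq 1$). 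Corollary~\ref{tr:co:pbm} then applies for all $0<\ObSoNoL<\ObSoNoL_{\TrSy}$ and $0<c<1/(8K)$.

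Finally I would translate the two probability bounds of Corollary~\ref{tr:co:pbm}. For the upper tail \eqref{tr:co:pb:e1}, I use $[\gb_{\eDi}\vee\oPoVa[\eDi]]\leq\eRa^{\eDi}$ (immediate from $\oPoVa[\eDi]\leq\ObSoNoL\,\eDi\,\oEvs[\eDi]$), so the event in \eqref{tr:co:pbm:e1} contains the event in \eqref{tr:co:pb:e1} and the bound $2\exp(-\eDi/36)$ is inherited verbatim. For the lower tail \eqref{tr:co:pb:e2}, I use $(1+d^{-1})^{-1}\eRa^{\eDi}\leq[\gb_{\eDi}\vee\oPoVa[\eDi]]$, again from the third elementary bound, so the event $\{\HnormV{\DiRvSo[\eDi]-\TrSo}^2<(1-8cK)(1+d^{-1})^{-1}\eRa^{\eDi}\}$ is contained in the event $\{\HnormV{\DiRvSo[\eDi]-\TrSo}^2<(1-8cK)[\gb_{\eDi}\vee\oPoVa[\eDi]]\}$ of \eqref{tr:co:pbm:e2}, whose probability is bounded by $2\exp(-c^2\eDi/2)$. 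I expect no serious obstacle here; the only point requiring a little care is bookkeeping the constant $K$ and checking that the direction of every inequality between $\eRa^{\eDi}$ and $[\gb_{\eDi}\vee\oPoVa[\eDi]]$ is the one needed so that each tail event is nested inside the corresponding event controlled by Corollary~\ref{tr:co:pbm}. $\hfill\qed$
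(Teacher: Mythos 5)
Your proposal is correct and is essentially the paper's own (omitted) argument: the authors state that the corollary ``follows immediately from Corollary~\ref{tr:co:pbm}'', having verified Assumption~\ref{tr:as:pr} with the stated $K$ in the paragraph preceding the corollary via exactly the chain $\ObSoNoL\eDi\Evs_1\leq\ObSoNoL\eDi\mEvs[\eDi]\leq L\eRa^{\eDi}\leq\Evs_1$ and the three elementary bounds you cite, and the two event inclusions you use ($[\gb_{\eDi}\vee\oPoVa[\eDi]]\leq\eRa^{\eDi}\leq(1+d^{-1})[\gb_{\eDi}\vee\oPoVa[\eDi]]$) are the right ones in the right directions. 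The small slack in the constant $K$ that you flag (an extra factor $(1+d^{-1})$ when $d^{-2}\HnormV{\TrSo-\RvSoEr}^2>1$) is already present in the paper's own bookkeeping and does not affect the substance.
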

\noindent The result implies consistency if $\eDi\to\infty$ as $\ObSoNoL\to0$ but it does not answer the question
of an optimal rate in a satisfactory way.
\subsection{Oracle concentration rate}
Considering the Sieve family $\{P_{\RvSo^{\Di}}\}_\Di$ of prior
distributions, the sequence $(\eRa^{\eDi})_{\eDi\geq1}$ provides up to constants a lower and upper
bound for the posterior concentration rate for each sub-family
$\{P_{\RvSo^{\eDi}}\}_{\eDi}$ satisfying the conditions of Corollary \ref{tr:co:pb}. Observe that the term $\gb_{\eDi}$ and hence the rate
depends on the parameter of interest $\TrSo$. Let us minimise  the rate for each
$\TrSo$ separately. For  a sequence $(a_m)_{m\geq1}$ with minimal
value in $A$ we set $\argmin\nolimits_{m\in A}\set{a_m}:=\min\set{m:a_m\leq
  a_{k},\forall k\in A}$ and define for all  $\ObSoNoL>0$
\begin{multline}\label{tr:de:tre}
  \treDi:=\treDi(\TrSo,\RvSoEr,\Ev):=\argmin_{\Di\geq 1} \set{\eRa^{\Di}}\text{ and }\\
  \treRa:=\treRa(\TrSo,\RvSoEr,\Ev):=\eRa^{\treDi}=\min_{\Di\geq 1}\eRa^{\Di}\quad .
\end{multline}
We may emphasise that $\treRa=o(1)$ as $\ObSoNoL\to 0$. Indeed, for
all $\delta>0$ there exists a dimension $\Di_\delta$ and a noise level $\ObSoNoL_\delta$ such that $\treRa\leq[\gb_{\Di_\delta}\vee\ObSoNoL_{\delta}\, \Di_\delta\,
  \oEvs[\Di_\delta]]\leq \delta$ for all
  $0<\ObSoNoL\leq\ObSoNoL_\delta$.
Obviously, given $\TrSo\in\cSo$ the rate $\treRa$ is a lower bound for all posterior concentration
rates $\eRa^{\eDi}$ associated with a
prior sub-family $\{P_{\RvSo^{\eDi}}\}_{\eDi}$ satisfying the conditions of
Corollary \ref{tr:co:pb}.
Moreover, the  next assertion establishes $\treRa$ up to constants as upper and  lower bound for
the concentration rate associated with the sub-family
$\{P_{\RvSo^{\treDi}}\}_{\treDi}$. Consequently,   $\treRa$ is called oracle posterior concentration rate
and $\{P_{\RvSo^{\treDi}}\}_{\treDi}$ oracle prior sub-family. The assertion follows again from
Corollary  \ref{tr:co:pbm} (with $c=1/(9K)$) and we omit its proof.
\begin{theo}[Oracle posterior concentration rate]\label{tr:pr:ora}
Suppose that Assumption \ref{tr:as:pv} holds true and that there
exists a constant  $1\leq L^{\TrSy}:=L^{\TrSy}(\TrSo,\Ev,\RvSoEr)<\infty$ such that
\begin{equation}\label{tr:pr:ora:ac}
\sup_{0<\ObSoNoL<1} \ObSoNoL\,\treDi\,\mEvs[\treDi](\treRa)^{-1}\leq L^{\TrSy}.
\end{equation}
If in addition $\treDi\to\infty$ as $\ObSoNoL\to0$ and $K^{\TrSy}:=10((1+d^{-1})\vee d^{-2}\HnormV{\TrSo-\RvSoEr}^2)L^{\TrSy}$, then
\[\lim_{\ObSoNoL\to0}\Ex_{\TrSo}P_{\DiRvSo[\treDi]|\ObSo}(
 (K^{\TrSy})^{-1} \treRa\leq \HnormV{\DiRvSo[\treDi]-\TrSo}^2\leq K^{\TrSy} \treRa)=1.\]
\end{theo}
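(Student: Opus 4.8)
The plan is to derive Theorem \ref{tr:pr:ora} as a direct specialisation of Corollary \ref{tr:co:pbm}, exactly as the paper announces, by checking that the oracle sub-family $\{P_{\RvSo^{\treDi}}\}_{\treDi}$ satisfies Assumption \ref{tr:as:pr} with the constant $K^{\TrSy}/10$. First I would observe that condition \eqref{tr:pr:ora:ac} is precisely condition \eqref{tr:as:pv:ac} evaluated along the oracle dimension sequence $\treDi$, with $L$ replaced by $L^{\TrSy}$, and that $\treRa = \eRa^{\treDi} = o(1)$ as $\ObSoNoL\to0$ has already been argued in the text following \eqref{tr:de:tre}. Hence the discussion preceding Corollary \ref{tr:co:pb} applies verbatim to the sub-family indexed by $\treDi$: one gets $\treDi\leq\DiMa$ for all $\ObSoNoL$ small enough, Assumption \ref{tr:as:pv} becomes usable, and the elementary bounds listed there (namely $\gr_{\treDi}\leq d^{-2}\HnormV{\RvSoEr-\TrSo}^2\ObSoNoL\mEvs[\treDi]$, together with $1\leq \ObSoNoL\treDi\mEvs[\treDi](\treDi\mPoVa[\treDi])^{-1}$ and $1\leq \ObSoNoL\treDi\oEvs[\treDi](\oPoVa[\treDi])^{-1}\leq 1+1/d$) yield
\[
(\gr_{\treDi}\vee \treDi\mPoVa[\treDi]) \leq \bigl((1+d^{-1})\vee d^{-2}\HnormV{\TrSo-\RvSoEr}^2\bigr)\,\ObSoNoL\,\treDi\,\mEvs[\treDi] \leq \bigl((1+d^{-1})\vee d^{-2}\HnormV{\TrSo-\RvSoEr}^2\bigr)\,L^{\TrSy}\,\treRa,
\]
using \eqref{tr:pr:ora:ac} in the last step. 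Since by definition $\treRa = \eRa^{\treDi}=[\gb_{\treDi}\vee\ObSoNoL\treDi\oEvs[\treDi]]$ and the same elementary bounds give $\ObSoNoL\treDi\oEvs[\treDi]\leq (1+1/d)\oPoVa[\treDi]$, so $\treRa \leq (1+1/d)[\gb_{\treDi}\vee\oPoVa[\treDi]]$, and trivially $\treRa\geq \gb_{\treDi}\vee\oPoVa[\treDi]$ is not needed — what one needs is $(\gr_{\treDi}\vee\treDi\mPoVa[\treDi])/(\gb_{\treDi}\vee\oPoVa[\treDi])\leq K$, which follows by combining the displayed chain with $[\gb_{\treDi}\vee\oPoVa[\treDi]]\geq \treRa/(1+1/d)$. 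This establishes Assumption \ref{tr:as:pr} with $K = ((1+d^{-1})\vee d^{-2}\HnormV{\TrSo-\RvSoEr}^2)L^{\TrSy}(1+1/d)$, or more conveniently one absorbs the factor into the constant so that $K^{\TrSy}=10((1+d^{-1})\vee d^{-2}\HnormV{\TrSo-\RvSoEr}^2)L^{\TrSy}$ works after tracking the $(4+\tfrac{11}{2}K)$ and $(1-8cK)$ constants from Corollary \ref{tr:co:pbm}.

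Next I would invoke Corollary \ref{tr:co:pbm} directly with the sub-family $\{P_{\RvSo^{\treDi}}\}_{\treDi}$ and with the choice $c = 1/(9K)$ as flagged in the statement, so that $1-8cK = 1-8/9 = 1/9 > 0$ and $4+\tfrac{11}{2}K$ is finite; both exceptional probabilities are then bounded by $2\exp(-\treDi/36)$ and $2\exp(-c^2\treDi/2)$ respectively. Since by hypothesis $\treDi\to\infty$ as $\ObSoNoL\to0$, both bounds tend to $0$. Taking complements and combining the two one-sided statements gives
\[
\Ex_{\TrSo}P_{\DiRvSo[\treDi]|\ObSo}\bigl((1-8cK)[\gb_{\treDi}\vee\oPoVa[\treDi]] \leq \HnormV{\DiRvSo[\treDi]-\TrSo}^2 \leq (4+\tfrac{11}{2}K)[\gb_{\treDi}\vee\oPoVa[\treDi]]\bigr) \geq 1 - 2\exp(-\treDi/36) - 2\exp(-c^2\treDi/2),
\]
and the right-hand side converges to $1$.

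Finally I would translate the bracketing quantity $[\gb_{\treDi}\vee\oPoVa[\treDi]]$ back into $\treRa$. The two-sided comparison $(1+d^{-1})^{-1}\treRa \leq [\gb_{\treDi}\vee\oPoVa[\treDi]] \leq \treRa$ (the lower bound from $\oPoVa[\treDi]\leq \ObSoNoL\treDi\oEvs[\treDi]$ and $\gb_{\treDi}\leq\gb_{\treDi}$, the upper bound from $\ObSoNoL\treDi\oEvs[\treDi]\leq(1+d^{-1})\oPoVa[\treDi]$) lets one replace the bracket by $\treRa$ at the cost of absorbing a further $(1+d^{-1})$ factor into the constants, which is exactly what the prefactor $10$ in $K^{\TrSy}$ is there to accommodate (one checks $(4+\tfrac{11}{2}K)\leq K^{\TrSy}$ and $(1-8cK)(1+d^{-1})^{-1}\geq (K^{\TrSy})^{-1}$ with the stated $c$ and $K$). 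This yields
\[
\lim_{\ObSoNoL\to0}\Ex_{\TrSo}P_{\DiRvSo[\treDi]|\ObSo}\bigl((K^{\TrSy})^{-1}\treRa \leq \HnormV{\DiRvSo[\treDi]-\TrSo}^2 \leq K^{\TrSy}\treRa\bigr) = 1,
\]
which is the claim. The only genuinely delicate point — and hence the main obstacle — is the bookkeeping of constants: one must verify that the single constant $K^{\TrSy}$ as defined simultaneously dominates the upper constant $4+\tfrac{11}{2}K$ and is dominated by the reciprocal of the lower constant $(1-8cK)(1+d^{-1})^{-1}$, after the two successive constant-absorptions (Assumption \ref{tr:as:pv}'s elementary bounds, and the bracket-to-$\treRa$ conversion); everything else is a mechanical application of results already proved. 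Since this is routine, I omit the detailed constant chase.
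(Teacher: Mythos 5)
Your proposal is correct and follows essentially the same route as the paper, which itself omits the proof and merely notes that the assertion "follows again from Corollary \ref{tr:co:pbm} (with $c=1/(9K)$)": you verify Assumption \ref{tr:as:pr} for the oracle sub-family via the discussion preceding Corollary \ref{tr:co:pb}, apply Corollary \ref{tr:co:pbm} with that same choice of $c$, and convert $[\gb_{\treDi}\vee\oPoVa[\treDi]]$ into $\treRa$ at the cost of a $(1+1/d)$ factor absorbed into $K^{\TrSy}$. The only blemishes are cosmetic: the reasons given for the two sides of your two-sided comparison are swapped, and the residual $(1+1/d)$ looseness in the Assumption \ref{tr:as:pr} constant is inherited from the paper's own bookkeeping rather than introduced by you.
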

\noindent Note that $\treDi\to\infty$ as $\ObSoNoL\to 0$ if and only if
  $\gb_{\Di}>0$ for all $m\geq1$. Roughly speaking,  the last assertion
  establishes $\treRa$ as oracle posterior concentration rate for all
  parameter of interest $\TrSo$ with components differing from the
  components of the prior mean
  $\RvSoEr$ infinitely many times.
 However, we do not need this additional assumption to prove the next
 assertion which establishes  $\treRa$ as  oracle
 rate for the family $\{\hSo^{\Di}\}_{\Di}$ of Bayes estimator
and that $\hSo^{\treDi}$ is  an oracle Bayes estimator.
  \begin{theo}[Oracle Bayes estimator]\label{tr:pr:bo} Consider the family $\{\hSo^{\Di}\}_{\Di}$ of Bayes
estimators. Under  Assumption
    \ref{tr:as:pv} we have \begin{inparaenum}[\dr\upshape(i\upshape)]
\item\label{tr:pr:bo:i}$\Ex_{\TrSo}
  \HnormV{\hSo^{\treDi}-\TrSo}^2 \leq (2+d^{-2}\HnormV{\TrSo-\RvSoEr}^2)\treRa$ and
\item\label{tr:pr:bo:ii}$\inf_{\Di\geq 1}  \Ex_{\TrSo}
  \HnormV{\hSo^{\Di}-\TrSo}^2\geq (1+1/d)^{-2}\treRa$
\end{inparaenum}  for all  $\ObSoNoL\in(0,\ObSoNoL_o)$. \end{theo}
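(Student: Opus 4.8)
The proof is a direct computation of the frequentist risk of the Bayes estimators $\hSo^{\Di}$, together with the elementary inequalities collected below Assumption~\ref{tr:as:pv}. Conditionally on $\ObSo$ the coordinate $\So^{\ObSo}_j$ is Gaussian with mean $\sigma_j(\RvSoVa_j^{-1}\RvSoEr_j+\Ev_j\ObSoNoL^{-1}\ObSo_j)$, and $\ObSo_j=\Ev_j\TrSo[j]+\sqrt{\ObSoNoL}\ObSoNo_j$; using $\sigma_j(\Ev_j^2\ObSoNoL^{-1}+\RvSoVa_j^{-1})=1$ one obtains $\Ex_{\TrSo}[\So^{\ObSo}_j]-\TrSo[j]=\sigma_j\RvSoVa_j^{-1}(\RvSoEr_j-\TrSo[j])$ and $\Var_{\TrSo}(\So^{\ObSo}_j)=\sigma_j^2\Ev_j^2\ObSoNoL^{-1}$, whence
\begin{equation*}
\Ex_{\TrSo}\HnormV{\hSo^{\Di}-\TrSo}^2=\gb_{\Di}+\gr_{\Di}+\sum_{j=1}^{\Di}\sigma_j^2\Ev_j^2\ObSoNoL^{-1},\qquad\Di\in\Nz.
\end{equation*}
Two facts will be used throughout. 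Without any assumption $\sigma_j\le\ObSoNoL\Evs_j$ and $\sigma_j\Ev_j^2\ObSoNoL^{-1}=1-\sigma_j\RvSoVa_j^{-1}\in(0,1]$, so $\sum_{j=1}^{\Di}\sigma_j^2\Ev_j^2\ObSoNoL^{-1}\le\oPoVa[\Di]\le\ObSoNoL\,\Di\,\oEvs[\Di]$. Under Assumption~\ref{tr:as:pv} the bounds stated right after it give, for $1\le j\le\DiMa$, both $\sigma_j\ge(1+d^{-1})^{-1}\ObSoNoL\Evs_j$ --- hence $\sigma_j^2\Ev_j^2\ObSoNoL^{-1}=\sigma_j^2(\ObSoNoL\Evs_j)^{-1}\ge(1+d^{-1})^{-2}\ObSoNoL\Evs_j$ --- and $\sigma_j\RvSoVa_j^{-1}\le d^{-1}\ObSoNoL^{1/2}\Evs_j^{1/2}$.

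I first record that $\treDi\le\DiMa$ for all small enough $\ObSoNoL$. Recall $\treRa=\eRa^{\treDi}=o(1)$, and by the boundedness of $\Ev$ the number $c_\Ev:=\inf_{j\ge1}\Evs_j$ is strictly positive; fix $\ObSoNoL_o\in(0,1)$ with $\treRa<c_\Ev$ on $(0,\ObSoNoL_o)$. Since $\oEvs[\treDi]\ge c_\Ev$ and $\ObSoNoL\mEvs[\treDi]\le\ObSoNoL\treDi\oEvs[\treDi]\le\eRa^{\treDi}=\treRa$, we get $\ObSoNoL\treDi\,c_\Ev\le\treRa<c_\Ev$, i.e. $\treDi<\ObSoNoL^{-1}$, hence $\treDi\le\gauss{\ObSoNoL^{-1}}$, while also $\ObSoNoL\mEvs[\treDi]\le\treRa<c_\Ev\le\Evs_1$; by the definition of $\DiMa$ this yields $\treDi\le\DiMa$, so Assumption~\ref{tr:as:pv} applies to every $j\le\treDi$. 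For~(\ref{tr:pr:bo:i}) I evaluate the decomposition at $\Di=\treDi$: the variance term is $\le\ObSoNoL\treDi\oEvs[\treDi]\le\treRa$, $\gb_{\treDi}\le\eRa^{\treDi}=\treRa$, and, using $\treDi\le\DiMa$,
\begin{equation*}
\gr_{\treDi}=\sum_{j=1}^{\treDi}\sigma_j^2\RvSoVa_j^{-2}(\RvSoEr_j-\TrSo[j])^2\le d^{-2}\,\ObSoNoL\mEvs[\treDi]\,\HnormV{\TrSo-\RvSoEr}^2\le d^{-2}\HnormV{\TrSo-\RvSoEr}^2\,\treRa;
\end{equation*}
adding the three bounds gives~(\ref{tr:pr:bo:i}).

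Claim~(\ref{tr:pr:bo:ii}) requires a lower bound on $\Ex_{\TrSo}\HnormV{\hSo^{\Di}-\TrSo}^2$ for every $\Di\ge1$, and this is the only genuine work. If $\Di\le\DiMa$ the coordinatewise variance bound applies throughout and
\begin{equation*}
\Ex_{\TrSo}\HnormV{\hSo^{\Di}-\TrSo}^2\ge\gb_{\Di}+(1+d^{-1})^{-2}\ObSoNoL\,\Di\,\oEvs[\Di]\ge(1+d^{-1})^{-2}\bigl[\gb_{\Di}\vee\ObSoNoL\,\Di\,\oEvs[\Di]\bigr]=(1+d^{-1})^{-2}\eRa^{\Di}\ge(1+d^{-1})^{-2}\treRa,
\end{equation*}
since $\treRa=\min_{m\ge1}\eRa^{m}$. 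If $\Di>\DiMa$, discarding the coordinates beyond $\DiMa$ in the variance term while keeping $\gb_{\Di}$ still gives $\Ex_{\TrSo}\HnormV{\hSo^{\Di}-\TrSo}^2\ge(1+d^{-1})^{-2}\bigl[\gb_{\Di}\vee\ObSoNoL\DiMa\oEvs[\DiMa]\bigr]$, and it remains to check that the bracket is $\ge\treRa$. As $m\mapsto\ObSoNoL\,m\,\oEvs[m]$ is non-decreasing and $\DiMa\ge\treDi$, one has $\ObSoNoL\DiMa\oEvs[\DiMa]\ge\ObSoNoL\treDi\oEvs[\treDi]$; if $\treRa=\ObSoNoL\treDi\oEvs[\treDi]$ this suffices, and otherwise $\treRa=\gb_{\treDi}$, so from $\gb_{\DiMa}\le\gb_{\treDi}=\treRa\le\eRa^{\DiMa}=\gb_{\DiMa}\vee\ObSoNoL\DiMa\oEvs[\DiMa]$ either $\ObSoNoL\DiMa\oEvs[\DiMa]\ge\treRa$ (done) or $\gb_{\DiMa}=\gb_{\treDi}=\treRa$, i.e. $\TrSo$ and $\RvSoEr$ coincide on $\{\treDi+1,\dots,\DiMa\}$. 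In that last configuration I would not discard the coordinates $\DiMa<j\le\Di$ but keep the risk $\sigma_j^2\Ev_j^2\ObSoNoL^{-1}+\sigma_j^2\RvSoVa_j^{-2}(\RvSoEr_j-\TrSo[j])^2\ge(\RvSoEr_j-\TrSo[j])^2\ObSoNoL\Evs_j\bigl[(\RvSoEr_j-\TrSo[j])^2+\ObSoNoL\Evs_j\bigr]^{-1}$ of each of them (the Bayes risk of a one-dimensional Gaussian shrinkage, minimised over the shrinkage coefficient), combine it with $\gb_{\Di}=\sum_{j>\Di}(\RvSoEr_j-\TrSo[j])^2$, and exploit the definition of $\DiMa$ --- namely that $\ObSoNoL\Evs_{\DiMa+1}>\Evs_1$ when $\DiMa<\gauss{\ObSoNoL^{-1}}$, respectively that $\Di$ then exceeds $\gauss{\ObSoNoL^{-1}}$ --- to recover $\Ex_{\TrSo}\HnormV{\hSo^{\Di}-\TrSo}^2\ge(1+d^{-1})^{-2}\treRa$. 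This final case distinction, not the algebra in the generic cases, is the point I expect to cost some care.
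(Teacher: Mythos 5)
Your proof of part~(\ref{tr:pr:bo:i}) is correct and takes exactly the paper's route: expand the risk as $\gb_{\Di}+\gr_{\Di}+\sum_{j\le\Di}\sigma_j^2\Ev_j^2\ObSoNoL^{-1}$, note that $\treRa=o(1)$ forces $\treDi\le\DiMa$ for small $\ObSoNoL$, and bound the three terms at $\Di=\treDi$ by $\treRa$, $d^{-2}\HnormV{\TrSo-\RvSoEr}^2\treRa$ and $\treRa$ using the elementary inequalities stated after Assumption~\ref{tr:as:pv}. Your argument for part~(\ref{tr:pr:bo:ii}) over $\Di\le\DiMa$ is likewise the paper's: $\sum_{j\le\Di}\sigma_j^2\Ev_j^2\ObSoNoL^{-1}\ge(1+1/d)^{-2}\ObSoNoL\Di\oEvs[\Di]$ and $\gr_{\Di}\ge0$ give $(1+1/d)^{-2}\eRa^{\Di}\ge(1+1/d)^{-2}\treRa$.

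Where you add something real is in noticing that for $\Di>\DiMa$ the coordinatewise lower bound $\sigma_j\ge(1+d^{-1})^{-1}\ObSoNoL\Evs_j$ is not available, because Assumption~\ref{tr:as:pv} constrains $\RvSoVa_j$ only for $j\le\DiMa$. The paper's own proof applies $\sum_{j\le\Di}\sigma_j(\sigma_j\Ev_j^2\ObSoNoL^{-1})\ge(1+1/d)^{-2}\ObSoNoL\Di\oEvs[\Di]$ to every $\Di\ge1$ without comment, although it is established in the text only for $1\le\Di\le\DiMa$; the remark about $\treRa=o(1)$ and $\ObSoNoL<\ObSoNoL_o$ only guarantees $\treDi\le\DiMa$, not that every $\Di$ appearing in the infimum is covered. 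So you have found a genuine lacuna in the published argument.

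Your own treatment of $\Di>\DiMa$, however, is also incomplete, and you acknowledge as much. Dropping the coordinates beyond $\DiMa$ from the variance term yields $(1+d^{-1})^{-2}\bigl[\gb_{\Di}\vee\ObSoNoL\DiMa\oEvs[\DiMa]\bigr]$, and your case analysis resolves everything except the configuration in which $\treRa=\gb_{\treDi}$, $\TrSo$ and $\RvSoEr$ coincide on $\{\treDi+1,\dots,\DiMa\}$, and $\ObSoNoL\DiMa\oEvs[\DiMa]<\treRa$. There you sketch a plan based on the one-dimensional Bayes risk $\ObSoNoL\Evs_j(\RvSoEr_j-\TrSo[j])^2[(\RvSoEr_j-\TrSo[j])^2+\ObSoNoL\Evs_j]^{-1}$ of the extra coordinates and the defining inequality $\ObSoNoL\Evs_{\DiMa+1}>\Evs_1$, but you do not carry it out, and it is not clear that it closes: the definition of $\DiMa$ controls only the coordinate $\DiMa+1$, while for $j>\DiMa+1$ neither $\ObSoNoL\Evs_j$ nor $\RvSoVa_j$ is bounded below under the stated hypotheses, so the harmonic-mean lower bound can be arbitrarily small there. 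As it stands, both your proof and the paper's prove part~(\ref{tr:pr:bo:ii}) rigorously only over $1\le\Di\le\DiMa$; a complete treatment of $\Di>\DiMa$ would need either an additional monotonicity-type assumption on $\Evs$ (such as Assumption~\ref{ad:as:ev}, which is not in force here) or a restriction of the infimum to $\Di\le\DiMa$ --- which is anyway the range of models the hierarchical prior actually charges, since $\RvDi$ is supported on $\{1,\dots,\DiMa\}$.
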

\noindent Note that, the oracle choice  $\treDi$ depends  on the parameter of interest $\TrSo$ and thus the
oracle   Bayes estimator $\hSo^{\treDi}$ as well as the associated  oracle
sub-family $\{P_{\RvSo^{\treDi}}\}_{\treDi}$ of prior distributions
 are generally not feasible.
\subsection{Minimax concentration rate}
In the spirit of a minimax theory
 we are interested in the following   in a uniform rate over a class
 of parameters rather than optimising the rate for each $\TrSo$
 separately. Given a strictly positive  and  non-increasing sequence
 $\wSo=\suite{\wSo}$ with  $\wSo_1=1$ and $\lim_{j\to\infty}\wSo_j=0$ consider for  $\theta\in\Hspace$  its weighted norm
$\normV{\theta}_\wSo^2:=\sum_{j\geq1}\theta_j^2/\wSo_j$. We define $\wHspace$ as the completion of $\Hspace$ with
respect to $\norm_\wSo$.
In order to formulate the optimality of the posterior concentration rate let us
define
\begin{multline}\label{tr:de:oe}
  \oeDi:=\oeDi(\wSo,\Ev):=\argmin_{\Di\geq 1} \set{\wSo_{\Di}\vee \ObSoNoL\, \Di\, \oEvs}\text{ and }\\
  \oeRa:=\oeRa(\wSo,\Ev):=[\wSo_{\oeDi}\vee \ObSoNoL\, \oeDi\, \oEvs[\oeDi]]\quad \text{for all }\ObSoNoL>0.
\end{multline}
We remark that  $\oeRa=o(1)$ and  $\oeDi\to\infty$ as $\ObSoNoL\to 0$
since $\wSo$ is strictly positive and tends monotonically to
zero. We assume in the following that the
parameter $\TrSo$ belongs to the ellipsoid $\cwrSo:=\set{\theta\in \wHspace:
  \normV{\theta-\RvSoEr}_\wSo^2\leq\rSo}$ and therefore, $\gb_\Di(\TrSo)\leq
\ga_{\Di}\rSo$. Note that $\treRa=\min_{\Di\geq1}[\gb_{\Di}\vee
\ObSoNoL\, \Di\, \oEvs]\leq (1\vee\rSo)\min_{\Di\geq1}[\wSo_{\Di}\vee \ObSoNoL\,
\Di\, \oEvs]=(1\vee\rSo)\oeRa$ and $\HnormV{\TrSo-\RvSoEr}^2\leq\rSo$, and hence from Theorem \ref{tr:pr:bo} it follows $\Ex_{\TrSo}
  \HnormV{\hSo^{\treDi}-\TrSo}^2 \leq (2+\rSo/d^2)(1\vee\rSo)\oeRa$. On the
  other hand side, given an estimator $\hSo$ of $\So$ let $\sup_{\So\in\cwrSo}\Ex_\So\HnormV{\hSo-\So}^2$
denote the maximal mean integrated squared error over the class
$\cwrSo$. It has been shown in \cite{JohannesSchwarz2013} that $\oeRa$
 provides up to a constant a lower bound for the maximal  MISE over
 the class
 $\cwrSo$ (assuming a prior mean  $\RvSoEr=0$) if the next assumption is satisfied.
\begin{ass}\label{tr:as:mi}
Let $\wSo$ and $\Ev$ be sequences such that
\begin{equation}
0<\kappa^{\OpSy}:=\kappa^{\OpSy}(\wSo,\Ev):=\inf_{0<\ObSoNoL<\ObSoNoL_o}\set{(\oeRa)^{-1}[\wSo_{\oeDi}\wedge \ObSoNoL\,
\oeDi\, \oEvs[\oeDi]]}\leq 1.\label{tr:as:mi:e1}
\end{equation}
\end{ass}
\noindent We may emphasise that under Assumption \ref{tr:as:mi} the rate
$\oeRa=\oeRa(\wSo,\Ev)$ is optimal in a minimax sense and the Bayes
estimate $\hSo^{\treDi}$ attains the minimax rate up to a constant.
However, the dimension parameter $\treDi$ depends still on the
parameter of interest $\TrSo$. Therefore, let us consider the Bayes
estimate $\hSo^{\oeDi}$ and the sub-family
$\{P_{\RvSo^{\oeDi}}\}_{\oeDi}$ of prior distributions
which do not depend anymore on the parameter
of interest $\TrSo$ but only on the set of possible parameters $\cwrSo$
characterised by the weight sequence $\wSo$. The next assertion can be shown 
along the lines of the proof of Theorem \ref{tr:pr:bo},
and, hence we omit its proof.
\begin{theo}[Minimax optimal Bayes estimator]\label{tr:th:obe} Let Assumption
  \ref{tr:as:pv} be satisfied. Considering the Bayes estimator
  $\hSo^{\oeDi}:=\Ex[\DiRvSo[\oeDi]|\ObSo]$  we have
  \begin{equation*}
\sup_{\TrSo\in\cwrSo}\Ex_{\TrSo}
  \HnormV{\hSo^{\oeDi}-\TrSo}^2 \leq (2+\rSo/d^2)(1\vee\rSo)\oeRa\quad\text{for all }\ObSoNoL\in(0,\ObSoNoL_o).
\end{equation*}
\end{theo}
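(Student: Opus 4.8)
The plan is to reduce Theorem \ref{tr:th:obe} (the minimax-optimal Bayes estimator bound) to the already-established oracle bound for the Bayes estimator, namely Theorem \ref{tr:pr:bo}\eqref{tr:pr:bo:i}, applied with the minimax dimension $\oeDi$ in place of the oracle dimension $\treDi$. First I would check that $\oeDi$ is an admissible dimension in the sense of Assumption \ref{tr:as:pv}: since $\oeRa=o(1)$ and $\oeDi\to\infty$ as $\ObSoNoL\to0$ (as noted right after \eqref{tr:de:oe}, because $\wSo$ is strictly positive and decreases to zero), the same argument used to pass from \eqref{tr:as:pv:ac} to the conclusion that $\eDi\le\DiMa$ shows that $\oeDi\le\DiMa$ for all sufficiently small $\ObSoNoL$, say $\ObSoNoL\in(0,\ObSoNoL_o)$. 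Hence the elementary bounds derived under Assumption \ref{tr:as:pv} are available at the index $\oeDi$, in particular $\gr_{\oeDi}\le d^{-2}\HnormV{\RvSoEr-\TrSo}^2\,\ObSoNoL\,\mEvs[\oeDi]\le d^{-2}\HnormV{\RvSoEr-\TrSo}^2\,\ObSoNoL\,\oeDi\,\oEvs[\oeDi]$ and $\oPoVa[\oeDi]\le\ObSoNoL\,\oeDi\,\oEvs[\oeDi]$.

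Next, the structure of the proof of Theorem \ref{tr:pr:bo}\eqref{tr:pr:bo:i} (which bounds the frequentist risk of $\hSo^{\Di}$ by a bias term plus a variance term $\gb_{\Di}+\oPoVa[\Di]+\gr_{\Di}$, then controls these by $\eRa^{\Di}$) carries over verbatim with $\Di=\oeDi$, giving
\begin{equation*}
\Ex_{\TrSo}\HnormV{\hSo^{\oeDi}-\TrSo}^2\le (2+d^{-2}\HnormV{\TrSo-\RvSoEr}^2)\,\eRa^{\oeDi},
\qquad \eRa^{\oeDi}=[\gb_{\oeDi}(\TrSo)\vee \ObSoNoL\,\oeDi\,\oEvs[\oeDi]].
\end{equation*}
It then remains to convert $\eRa^{\oeDi}$, which involves the $\TrSo$-dependent tail $\gb_{\oeDi}(\TrSo)$, into the $\TrSo$-free rate $\oeRa$. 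This is exactly where membership in the ellipsoid $\cwrSo$ is used: for $\TrSo\in\cwrSo$ one has $\gb_{\Di}(\TrSo)=\sum_{j>\Di}(\TrSo[j]-\RvSoEr_j)^2\le \wSo_{\Di}\,\normV{\TrSo-\RvSoEr}_\wSo^2\le \rSo\,\wSo_{\Di}$ because $\wSo$ is non-increasing, so that $\eRa^{\oeDi}\le[\rSo\,\wSo_{\oeDi}\vee \ObSoNoL\,\oeDi\,\oEvs[\oeDi]]\le(1\vee\rSo)[\wSo_{\oeDi}\vee \ObSoNoL\,\oeDi\,\oEvs[\oeDi]]=(1\vee\rSo)\,\oeRa$ by the very definition of $\oeDi$ and $\oeRa$ in \eqref{tr:de:oe}. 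Combining with $\HnormV{\TrSo-\RvSoEr}^2\le\normV{\TrSo-\RvSoEr}_\wSo^2\le\rSo$ (since $\wSo_1=1$ and $\wSo$ is non-increasing, so $\wSo_j\le1$ for all $j$) yields the uniform bound $\sup_{\TrSo\in\cwrSo}\Ex_{\TrSo}\HnormV{\hSo^{\oeDi}-\TrSo}^2\le(2+\rSo/d^2)(1\vee\rSo)\,\oeRa$ for all $\ObSoNoL\in(0,\ObSoNoL_o)$, as claimed.

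The only genuinely delicate point — and the one I would write out carefully — is the uniformity: the constant $\ObSoNoL_o$ and the bound must not secretly depend on the individual $\TrSo$. Inspecting the argument, the choice of $\ObSoNoL_o$ comes only from requiring $\oeDi\le\DiMa$, and $\oeDi=\oeDi(\wSo,\Ev)$ together with $\DiMa=\DiMa(\ObSoNoL,\Ev)$ depend solely on $\wSo$, $\Ev$, $\ObSoNoL$, not on $\TrSo$; likewise the final constant $(2+\rSo/d^2)(1\vee\rSo)$ depends only on $\rSo$ and $d$. So the bound is genuinely uniform over the ellipsoid, which is precisely what distinguishes this statement from Theorem \ref{tr:pr:bo} and justifies reading off from it the minimax optimality of $\hSo^{\oeDi}$ once Assumption \ref{tr:as:mi} is invoked. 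No new tail estimates are needed beyond those already used for Theorem \ref{tr:pr:bo}; the proof is essentially a bookkeeping of constants plus the ellipsoid inequality $\gb_{\Di}(\TrSo)\le\rSo\,\wSo_{\Di}$.
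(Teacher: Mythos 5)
Your proposal is correct and follows essentially the same route the paper intends: it omits the proof precisely because the bound is Theorem \ref{tr:pr:bo}\eqref{tr:pr:bo:i} evaluated at $\Di=\oeDi$, combined with the ellipsoid inequalities $\gb_{\Di}\leq\rSo\,\wSo_{\Di}$, $\HnormV{\TrSo-\RvSoEr}^2\leq\rSo$ and $\eRa^{\oeDi}\leq(1\vee\rSo)\oeRa$, all of which are stated in the paragraph preceding the theorem. Your additional remarks on the admissibility $\oeDi\leq\DiMa$ and on the uniformity of $\ObSoNoL_o$ and of the constant over $\cwrSo$ are accurate and, if anything, more explicit than the paper itself.
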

\noindent The last assertion establishes the minimax optimality of the Bayes
estimate $\hSo^{\oeDi}$ over the class $\cwrSo$.  Moreover,
 the minimax rate $\oeRa$  provides up to a constant a lower and an
 upper bound for the posterior concentration rate associated with the
 prior sub-family $\{P_{\RvSo^{\oeDi}}\}_{\oeDi}$, which
 is summarised in the next assertion.
\begin{theo}[Minimax optimal posterior concentration rate]\label{tr:th:opc}
Let Assumption \ref{tr:as:pv} and \ref{tr:as:mi} hold
true. If there
exists a constant $1\leq L^{\OpSy}:=L^{\OpSy}(\wSo,\Ev)<\infty$ such that
\begin{equation}\label{tr:as:mi:ac}
\sup_{0<\ObSoNoL<\ObSoNoL_o}\ObSoNoL\,\oeDi\,\mEvs[\oeDi](\oeRa)^{-1}\leq L^{\OpSy}
\end{equation}
and  $K^{\OpSy}:=K^{\OpSy}(\rSo,\wSo,\Ev, d, \kappa):=10 ((1+1/d)\vee \rSo/d^{2})(1\vee\rSo)(L^{\OpSy}/\kappa^{\OpSy})$, then
\[\lim_{\ObSoNoL\to0}\inf_{\TrSo\in\cwrSo}\Ex_{\TrSo}P_{\DiRvSo[\oeDi]|\ObSo}(
(K^{\OpSy})^{-1} \oeRa\leq \HnormV{\DiRvSo[\oeDi]-\TrSo}^2\leq K^{\OpSy} \oeRa)=1.\]
\end{theo}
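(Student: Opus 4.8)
The plan is to reduce Theorem \ref{tr:th:opc} to Corollary \ref{tr:co:pbm} applied uniformly over the ellipsoid $\cwrSo$, by verifying that Assumption \ref{tr:as:pr} holds along the sub-family $\{P_{\RvSo^{\oeDi}}\}_{\oeDi}$ with a constant $K$ that does \emph{not} depend on the particular $\TrSo\in\cwrSo$, and then by replacing the $\TrSo$-dependent quantity $[\gb_{\oeDi}\vee\oPoVa[\oeDi]]$ that appears in Corollary \ref{tr:co:pbm} by the uniform rate $\oeRa$ up to two-sided constants. First I would record the elementary two-sided comparisons that follow from Assumption \ref{tr:as:pv}: for $1\le\Di\le\DiMa$ one has $\ObSoNoL\,\Di\,\oEvs[\Di]\,(\oPoVa[\Di])^{-1}\in[1,1+1/d]$, $\gr_{\Di}\le d^{-2}\HnormV{\RvSoEr-\TrSo}^2\,\ObSoNoL\,\mEvs[\Di]$, and $1\le\ObSoNoL\,\Di\,\mEvs[\Di](\Di\mPoVa[\Di])^{-1}$. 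Since $\TrSo\in\cwrSo$ gives $\HnormV{\RvSoEr-\TrSo}^2\le\normV{\TrSo-\RvSoEr}^2_\wSo\le\rSo$ (using $\wSo_1=1$, $\wSo$ non-increasing), these bounds become uniform over the ellipsoid.

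Next I would check $\oeDi\le\DiMa$ eventually. Because $\oeRa=o(1)$ and $\oeDi\to\infty$ as $\ObSoNoL\to0$ (stated right after \eqref{tr:de:oe}), and because of hypothesis \eqref{tr:as:mi:ac} one gets $\ObSoNoL\,\oeDi\,\Evs_1\le\ObSoNoL\,\oeDi\,\mEvs[\oeDi]\le L^{\OpSy}\oeRa\le\Evs_1$ for $\ObSoNoL$ small, hence $\oeDi\le\gauss{\ObSoNoL^{-1}}$ and $\ObSoNoL\,\mEvs[\oeDi]\le\Evs_1$, so $\oeDi\le\DiMa$ for all $\ObSoNoL<\ObSoNoL_o$ after possibly shrinking $\ObSoNoL_o$. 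Now I would assemble the constant: from the displayed bounds and \eqref{tr:as:mi:ac},
\[
\gr_{\oeDi}\vee\oeDi\,\mPoVa[\oeDi]\le\big(d^{-2}\rSo\vee1\big)\,\ObSoNoL\,\oeDi\,\mEvs[\oeDi]\le\big(d^{-2}\rSo\vee1\big)L^{\OpSy}\,\oeRa,
\]
while $\gb_{\oeDi}\vee\oPoVa[\oeDi]\ge\oPoVa[\oeDi]\ge(1+1/d)^{-1}\ObSoNoL\,\oeDi\,\oEvs[\oeDi]$. The key point — and here Assumption \ref{tr:as:mi} enters — is a uniform \emph{lower} bound $\gb_{\oeDi}\vee\oPoVa[\oeDi]\ge c_0\,\oeRa$: from \eqref{tr:as:mi:e1} either $\wSo_{\oeDi}\ge\kappa^{\OpSy}\oeRa$, in which case $\gb_{\oeDi}\le\ga_{\oeDi}\rSo$ is \emph{not} directly helpful, so instead one uses the complementary alternative $\ObSoNoL\,\oeDi\,\oEvs[\oeDi]\ge\kappa^{\OpSy}\oeRa$, whence $\oPoVa[\oeDi]\ge(1+1/d)^{-1}\kappa^{\OpSy}\oeRa$; and when $\wSo_{\oeDi}\ge\kappa^{\OpSy}\oeRa$ one notes by the definition of $\oeDi$ that for $\Di=\oeDi-1$ one has $\ObSoNoL(\oeDi-1)\oEvs[\oeDi-1]\ge\tfrac12\oeRa$ for $\ObSoNoL$ small (a standard consequence of the $\argmin$ characterisation, cf. the argument after \eqref{tr:de:tre}), which again forces $\oPoVa[\oeDi]\gtrsim\oeRa$ by monotonicity-type estimates on the partial sums of $\Evs$. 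In any case one obtains $\gb_{\oeDi}\vee\oPoVa[\oeDi]\in[c_0\oeRa,\ c_1\oeRa]$ with $c_0,c_1$ depending only on $(\rSo,\wSo,\Ev,d,\kappa^{\OpSy})$ and $L^{\OpSy}$, and consequently Assumption \ref{tr:as:pr} holds uniformly over $\cwrSo$ with $K=((1+1/d)\vee d^{-2}\rSo)L^{\OpSy}/\kappa^{\OpSy}$ (matching the $K^{\OpSy}/10$ announced).

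Having verified Assumption \ref{tr:as:pr} uniformly, I would apply Corollary \ref{tr:co:pbm} with $c=1/(9K)$ to $\eDi=\oeDi$: inequality \eqref{tr:co:pbm:e1} gives, uniformly in $\TrSo\in\cwrSo$, that $P_{\DiRvSo[\oeDi]|\ObSo}(\HnormV{\DiRvSo[\oeDi]-\TrSo}^2>K^{\OpSy}\oeRa)$ has $\Ex_{\TrSo}$-expectation at most $2\exp(-\oeDi/36)$, and \eqref{tr:co:pbm:e2} with $1-8cK=1-8/9=1/9$ gives that $P_{\DiRvSo[\oeDi]|\ObSo}(\HnormV{\DiRvSo[\oeDi]-\TrSo}^2<(K^{\OpSy})^{-1}\oeRa)$ has expectation at most $2\exp(-c^2\oeDi/2)$ — where to pass from the bound $[\gb_{\oeDi}\vee\oPoVa[\oeDi]]$ in the corollary to $\oeRa$ I use the two-sided comparison $c_0\oeRa\le[\gb_{\oeDi}\vee\oPoVa[\oeDi]]\le c_1\oeRa$ from the previous paragraph, absorbing $c_0,c_1$ into $K^{\OpSy}$. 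Finally, since $\oeDi\to\infty$ as $\ObSoNoL\to0$, both exponential bounds tend to $0$ uniformly in $\TrSo\in\cwrSo$, so
\[
\sup_{\TrSo\in\cwrSo}\Ex_{\TrSo}P_{\DiRvSo[\oeDi]|\ObSo}\big((K^{\OpSy})^{-1}\oeRa\le\HnormV{\DiRvSo[\oeDi]-\TrSo}^2\le K^{\OpSy}\oeRa\big)^{c}\longrightarrow0,
\]
which is the claimed statement. The main obstacle I anticipate is the uniform two-sided control of $\gb_{\oeDi}\vee\oPoVa[\oeDi]$ against $\oeRa$ over the whole ellipsoid — the upper side is immediate but the lower side genuinely needs Assumption \ref{tr:as:mi} together with the $\argmin$ characterisation of $\oeDi$ to rule out the degenerate case where $\oeRa$ is carried almost entirely by the bias term $\wSo_{\oeDi}$ (which has no matching lower bound on $\gb_{\oeDi}$ valid for \emph{every} $\TrSo\in\cwrSo$, only the upper bound $\gb_{\oeDi}\le\ga_{\oeDi}\rSo$); everything else is bookkeeping with the elementary bounds already recorded in the paper.
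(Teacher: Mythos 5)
Your overall route is the same as the paper's: reduce to Corollary~\ref{tr:co:pbm} (the paper goes via Corollary~\ref{tr:co:pb}, which is just that corollary rewritten under Assumption~\ref{tr:as:pv}), verify that the uniform constant $K=((1+1/d)\vee d^{-2}\rSo)L^{\OpSy}/\kappa^{\OpSy}$ works over the ellipsoid, control $\gb_{\oeDi}\vee\oPoVa[\oeDi]$ from above and below by multiples of $\oeRa$, and let the exponential bounds vanish as $\oeDi\to\infty$. Your upper bound $\eRa^{\oeDi}\le(1\vee\rSo)\oeRa$ and the constant bookkeeping also match.

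However, your handling of the lower bound misreads Assumption~\ref{tr:as:mi}. Equation~\eqref{tr:as:mi:e1} bounds $(\oeRa)^{-1}[\wSo_{\oeDi}\wedge\ObSoNoL\,\oeDi\,\oEvs[\oeDi]]$ from below by $\kappa^{\OpSy}$ --- that is a \emph{minimum}, so it gives \emph{both} $\wSo_{\oeDi}\ge\kappa^{\OpSy}\oeRa$ \emph{and} $\ObSoNoL\,\oeDi\,\oEvs[\oeDi]\ge\kappa^{\OpSy}\oeRa$ simultaneously. There is no case split: the variance term is always at least $\kappa^{\OpSy}\oeRa$, hence $\eRa^{\oeDi}=[\gb_{\oeDi}\vee\ObSoNoL\,\oeDi\,\oEvs[\oeDi]]\ge\kappa^{\OpSy}\oeRa$ unconditionally, with no need for a complementary branch. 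Your second branch (``when $\wSo_{\oeDi}\ge\kappa^{\OpSy}\oeRa$ \ldots argmin characterisation \ldots forces $\oPoVa[\oeDi]\gtrsim\oeRa$'') is therefore superfluous, and also not actually justified as written: the inequality $\ObSoNoL(\oeDi-1)\oEvs[\oeDi-1]\ge\tfrac12\oeRa$ does not follow from the definition of $\oeDi$ as an $\argmin$, since at $m=\oeDi-1$ the maximum $[\wSo_m\vee\ObSoNoL m\oEvs[m]]\ge\oeRa$ could be carried entirely by the monotonically larger bias term $\wSo_{\oeDi-1}$. Fortunately this never arises, precisely because of the $\wedge$. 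Once you correct this reading, your proof collapses to the paper's argument.
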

\noindent Comparing the last result with the result of Theorem \ref{tr:pr:ora} and keeping
in mind that $(1\vee \rSo)\oeRa\geq\treRa$, the posterior
concentration rate associated with the prior sub-family
$\{P_{\RvSo^{\oeDi}}\}_{\oeDi}$ is of order of the minimax rate $\oeRa$ uniformly for all parameter of
interest $\TrSo\in\cwrSo$. However, for certain parameter $\TrSo$ the minimax rate $\oeRa$ may be far slower than
the oracle rate $\treRa$. For example, as shown in case
{\bf\small[P-P]} in the following illustration the
minimax rate $\oeRa$ is of order $O(\ObSoNoL^{2p/(2a+2p+1)}$
) while it is not hard to see, that for all parameter $\TrSo$ with $\gb_\Di\asymp \exp (-\Di^{2p})$  the
oracle rate is of order $O(\ObSoNoL|\log\ObSoNoL|^{(2a+1)/(2p)})$ (see
case {\bf\small[E-P]}). Moreover, the optimal choice $\oeDi$ of the
dimension parameter still depends on the class $\cwrSo$, which might
be unknown in practise, therefore we will consider in the next section
a fully data-driven choice using a hierarchical specification of the  prior distribution.

\begin{illu}\label{tr:il:mi} We illustrate the last assumptions and the minimax rate for typical choices of the sequences $\wSo$ and $\Ev$. For two strictly
positive sequences $(a_j)_{j\geq1}$ and $(b_j)_{j\geq1}$  we  write $a_j\asymp b_j$, if $(a_j/b_j)_{j\geq 1}$ is bounded away from $0$ and infinity.
\begin{itemize}
\item[{\bf\small[P-P]}] Consider $\wSo_j\asymp j^{-2p}$ and $\Ev_j^2\asymp
  j^{-2a}$ with $p> 0$ and $a>0$ then $\oeDi\asymp
  \ObSoNoL^{-1/(2p+2a+1)}$ and $\oeRa\asymp\ObSoNoL^{2p/(2a+2p+1)}$.
\item[{\bf\small[E-P]}] Consider $\wSo_j\asymp\exp (-j^{2p}+1)$ and
  $\Ev_j^2\asymp j^{-2a}$ with $p> 0$ and $a>0$ then $\oeDi\asymp |\log
  \ObSoNoL -\frac{2a+1}{2p}(\log|\log \ObSoNoL|)|^{1/(2p)}$ and
  $\oeRa\asymp \ObSoNoL |\log \ObSoNoL|^{(2a+1)/(2p)}$.
\item[{\bf\small[P-E]}] Consider $\wSo_j\asymp j^{-2p}$ and $\Ev_j^2\asymp
  \exp (-j^{2a}+1)$, with $p> 0$ and $a>0$ then $\oeDi\asymp |\log
  \ObSoNoL -\frac{2p+(2a-1)_{+}}{2a}(\log|\log \ObSoNoL|)|^{1/(2a)}$
  and $\oeRa\asymp|\log \ObSoNoL|^{-p/a}$.
\end{itemize}
In all three cases Assumption \ref{tr:as:mi} and \eqref{tr:as:mi:ac} hold true.\qed
\end{illu}


\section{Data-driven Bayesian estimation}\label{s:ad}
We will derive in this section a concentration rate given the
aforementioned hierarchical prior distribution. For this purpose
we impose additional conditions on the behaviour of the sequence
$\Ev=\suite{\Ev}$.
\begin{ass}\label{ad:as:ev}
There  exist finite constants $C_\Ev\geq 1$ and $L_\Ev\geq1$ such that
for all $k,l\in\Nz$ hold
\begin{inparaenum}[\dr\upshape(i\upshape)]
\item\label{ad:as:ev:a} $\max_{j>k}\Ev_j^2\leq C_\Ev \min_{1\leq j\leq k}\Ev_j^2=C_\Ev
\mEvs[k]^{-1}$;
\item\label{ad:as:ev:b}$\mEvs[kl]\leq\mEvs[k]\mEvs[l]$;
\item\label{ad:as:ev:c}$1\leq \mEvs[k]/\oEvs[k]\leq L_\Ev$.
\end{inparaenum}
\end{ass}%
\noindent We may emphasise that Assumption \ref{ad:as:ev} \eqref{ad:as:ev:a} holds
trivially with $C_\Ev=1$ if the sequence $\Ev$ is monotonically
decreasing. Moreover, considering the typical choices of the sequence
$\Ev$ presented in Illustration \ref{tr:il:mi}, Assumption
\ref{ad:as:ev} \eqref{ad:as:ev:b} and \eqref{ad:as:ev:c} hold only true
in case of a polynomial decay, i.e., {\bf\small[P-P]} and
{\bf\small[E-P]}. In other words,  Assumption
\ref{ad:as:ev} excludes an exponential decay of $\Ev$, i.e., {\bf\small[P-E]}.

\begin{ass}\label{ad:as:tr}
Let $\RvSoEr$, $\TrSo$ and $\Ev$ be sequences such that
\begin{equation}
0<\kappa^{\TrSy}:=\kappa^{\TrSy}(\RvSoEr,\TrSo,\Ev):=\inf_{0<\ObSoNoL<\ObSoNoL_o}\set{(\treRa)^{-1}[\gb_{\treDi}\wedge \ObSoNoL\,
\treDi\, \oEvs[\treDi]]}\leq 1.\label{ad:as:tr:e1}
\end{equation}
\end{ass}%
\noindent Observe that   $\gb_{\treDi}\geq
\kappa^{\TrSy}\treRa>0$ due to Assumption \ref{ad:as:tr} which in turn implies $\gb_k>0$ for all
$k\in\Nz$ and, hence $\treDi\to\infty$ as
$\ObSoNoL\to0$.  Indeed, if there exists $K\in\Nz$ such
that $\gb_K=0$ and $\gb_{K-1}>0$ then there
exists $\ObSoNoL_{\TrSy}\in(0,1)$ with
$\ObSoNoL_{\TrSy}K\oEvs[K]<\gb_{K-1}$ and for all
$\ObSoNoL\in(0,\ObSoNoL_{\TrSy})$ it is easily seen that
$\treDi=K$ and hence $\gb_{\treDi}=0$.
Moreover, due to Assumption
\ref{ad:as:ev} \eqref{ad:as:ev:c} there exists a constant $L_\Ev$
depending only on $\Ev$ such that  $\ObSoNoL\,\treDi\mEvs[\treDi] (\treRa)^{-1}\leq
\mEvs[\treDi](\oEvs[\treDi])^{-1}\leq L_{\Ev}$, i.e.,  condition
\eqref{tr:as:pv:ac} holds true uniformly for all parameters
$\So\in\Hspace$. If we suppose in addition to Assumption \ref{ad:as:ev}
and \ref{ad:as:tr} that the sequence
of prior variances meets Assumption \ref{tr:as:pv} and that
$\treDi\to\infty$ as $\ObSoNoL\to0$, then the
assumptions of Theorem \ref{tr:pr:ora} are satisfied and $\treRa$
provides up to a constant an upper and lower bound of  the posterior
concentration rate associated with the oracle prior sub-family
$\{P_{\treDi}\}_{\treDi}$.

\noindent  Let us specify the prior distribution $P_{\RvDi}$
of the thresholding parameter $\RvDi$ taking its values in
$\{1,\dotsc,\DiMa\}$ with $\DiMa$ as in Assumption \ref{tr:as:pv},
and for $1\leq
 m\leq \DiMa$
 \begin{equation} \label{ad:de:pr:di}
p_{\RvDi}(\Di):=P_{\RvDi}(\RvDi=\Di)=\frac{\exp(-3 C_\Ev
  \Di/2)\prod_{j=1}^\Di(\RvSoVa_j/\sigma_j)^{1/2}}{\sum_{k=1}^{\DiMa}\exp(-3
  C_\Ev k/2)\prod_{j=1}^{k}(\RvSoVa_j/\sigma_j)^{1/2}}.
\end{equation}
\noindent Keeping in mind the sequences $\So^{\ObSo}=\suite{\So^{\ObSo}}$ and $\sigma=\suite{\sigma}$ of  conditional means and
variances, respectively,  given by $\So^{\ObSo}_j=\sigma_j(\Ev_j\ObSoNoL^{-1}Y_j+
\RvSoVa^{-1}_j\RvSoEr_j)$ and
$\sigma_j=(\RvSoVa_j^{-1}+\Ev_j^2\ObSoNoL^{-1})^{-1}$, for each
$\Di\in\Nz$ the sequence $\hSo^\Di=\suite{\hSo^\Di}=\Ex[\RvSo^\Di|\ObSo]$ of  posterior
means of $\RvSo^\Di$ satisfies  $\hSo^\Di_j=\So^{\ObSo}_j \Ind{\set{1\leq j\leq \Di}}+
\RvSoEr_j \Ind{\set{j> \Di}}$. Introducing further  the weighted norm
$\normV{\theta}_{\sigma}^2:=\sum_{j\geq1}\theta_j^2/\sigma_j$ for $\theta\in\Hspace$ the
posterior distribution $P_{\RvDi|\ObSo}$ of the thresholding parameter
$\RvDi$  is  given by
 \begin{equation} \label{ad:po:di}
p_{\RvDi|\ObSo}(m)=P_{\RvDi|\ObSo}(\RvDi=m)
=\frac{\exp(-\frac{1}{2}\{-\normV{\hSo^\Di-\RvSoEr}_{\sigma}^2+3
  C_\Ev m\})}{\sum_{k=1}^{\DiMa}\exp(-\frac{1}{2}\{-\normV{\hSo^k-\RvSoEr}_{\sigma}^2+3
  C_\Ev k\})}
\end{equation}
\noindent Interestingly, the posterior distribution $P_{\RvDi|\ObSo}$ of the thresholding
parameter $\RvDi$ is concentrating around the oracle dimension
parameter $\treDi$ as $\ObSoNoL$ tends to zero. To be more precise,
there exists $\ObSoNoL_{\TrSy}\in (0,1)$ such that $\treDi\leq \DiMa$
for all $\ObSoNoL\in (0,\ObSoNoL_{\TrSy})$ since $\treRa=o(1)$ for
$\ObSoNoL\to0$. Let us further define for all $\ObSoNoL\in (0,\ObSoNoL_{\TrSy})$
\begin{multline}\label{ad:de:mp}
\meDi:=\min\set{m\in\set{1,\dotsc,\treDi}: \gb_\Di\leq 8 L_{\Ev}C_{\Ev}(1+1/d)\treRa}\quad\text{and}\\\peDi:=\max\set{m\in\set{\treDi,\dotsc,\DiMa}: \Di\leq 5L_{\Ev}(\ObSoNoL\mEvs[\treDi])^{-1} \treRa }
\end{multline}
where  the defining sets are not empty under  Assumption
\ref{ad:as:ev} since $8  L_{\Ev}C_{\Ev}(1+1/d)\treRa\geq 8
  L_{\Ev}C_{\Ev}(1+1/d)\gb_{\treDi}\geq \gb_{\treDi}$ and  $5L_{\Ev}(\ObSoNoL\mEvs[\treDi])^{-1} \treRa \geq
 5\treDi\geq \treDi$. Moreover, under Assumption \ref{ad:as:tr} it is
 easily verified that $\meDi\to \infty$ as
 $\ObSoNoL\to 0$.
\begin{lem}\label{ad:le:pm}
If Assumptions \ref{tr:as:pv} and \ref{ad:as:ev}  hold true then for all $\ObSoNoL\in(0,\ObSoNoL_{\TrSy})$
\begin{enumerate}[label=\emph{\textbf{(\roman*)}},ref=\emph{\textbf{(\roman*)}}]
\item\label{ad:le:pm:i} $\Ex_{\TrSo}P_{\RvDi|\ObSo}(1\leq
  \RvDi<\meDi)\leq
  2\exp\big(-\frac{7C_{\Ev}}{32}\treDi+\log\DiMa\big)\leq
  2\exp\big(-\frac{C_{\Ev}}{5}\treDi+\log\DiMa\big)$;
\item\label{ad:le:pm:ii}
  $\Ex_{\TrSo}P_{\RvDi|\ObSo}(\peDi<
  \RvDi\leq\DiMa)\leq 2\exp\big(-\frac{4C_\Ev}{9}\treDi+\log\DiMa\big)\leq
  2\exp\big(-\frac{C_{\Ev}}{5}\treDi+\log\DiMa\big)$.
\end{enumerate}
\end{lem}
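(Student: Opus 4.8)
\noindent\emph{Proof strategy.} Rewrite the marginal posterior \eqref{ad:po:di} of $\RvDi$, up to a normalising constant, as $p_{\RvDi|\ObSo}(m)\propto\exp\bigl(\tfrac12 T_m-\tfrac32 C_\Ev m\bigr)$, where $T_m:=\normV{\hSo^m-\RvSoEr}_{\sigma}^2=\sum_{j=1}^m(\So^{\ObSo}_j-\RvSoEr_j)^2/\sigma_j$ (the second identity because $\hSo^m_j=\So^{\ObSo}_j$ for $j\leq m$ and $\hSo^m_j=\RvSoEr_j$ otherwise). The crucial distributional fact is that under $\Ex_{\TrSo}$ the variables $(\So^{\ObSo}_j-\RvSoEr_j)/\sqrt{\sigma_j}$, $j\in\Nz$, are independent and normal with variance $\rho_j:=\sigma_j\Ev_j^2\ObSoNoL^{-1}=1-\sigma_j\RvSoVa_j^{-1}\in(0,1]$ and mean $\mu_j:=\rho_j(\TrSo[j]-\RvSoEr_j)/\sqrt{\sigma_j}$, so that every increment $T_{m_2}-T_{m_1}$ with $m_1<m_2$ is a sum of independent squared Gaussians to which Lemma~\ref{tr:le:te} applies with $t=1$ (as $\rho_j\leq1$). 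Two properties of $\rho_j$ are used: by Assumption~\ref{tr:as:pv} we have $\RvSoVa_j\Ev_j^2\ObSoNoL^{-1}\geq d$ for $j\leq\DiMa$, hence $\rho_j\geq d/(1+d)$ uniformly; and for $j\leq\treDi$, $1-\rho_j=\sigma_j\RvSoVa_j^{-1}\leq d^{-1}(\ObSoNoL\mEvs[\treDi])^{1/2}\leq d^{-1}(L_\Ev\treRa/\treDi)^{1/2}\to0$, i.e.\ $\rho_j=1-o(1)$ uniformly in $j\leq\treDi$ as $\ObSoNoL\to0$. Since $\treRa=o(1)$, there is $\ObSoNoL_{\TrSy}$ such that $\treDi\leq\DiMa$ for $\ObSoNoL<\ObSoNoL_{\TrSy}$, so $p_{\RvDi|\ObSo}(\treDi)$ is a genuine summand in the denominator of \eqref{ad:po:di}.

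For \ref{ad:le:pm:i}, fix $1\leq m<\meDi\leq\treDi$ and keep only the $m=\treDi$ term in the denominator of \eqref{ad:po:di} to get $p_{\RvDi|\ObSo}(m)\leq\exp\bigl(-\tfrac12 U_m+\tfrac32 C_\Ev(\treDi-m)\bigr)$, where $U_m:=T_{\treDi}-T_m=\sum_{j=m+1}^{\treDi}(\So^{\ObSo}_j-\RvSoEr_j)^2/\sigma_j\geq0$. By the definition \eqref{ad:de:mp} of $\meDi$ we have $\gb_m>8L_\Ev C_\Ev(1+d^{-1})\treRa$, whereas $\gb_{\treDi}\leq\eRa^{\treDi}=\treRa$; combining this with $1/\sigma_j\geq\Ev_j^2\ObSoNoL^{-1}\geq(\ObSoNoL\mEvs[\treDi])^{-1}$ for $j\leq\treDi$ and with $\ObSoNoL\treDi\mEvs[\treDi]\leq L_\Ev\treRa$ (Assumption~\ref{ad:as:ev}~\eqref{ad:as:ev:c} together with $\ObSoNoL\treDi\oEvs[\treDi]\leq\treRa$, so that the factors $L_\Ev$ cancel) gives
\[
\Ex_{\TrSo}U_m\ \geq\ \sum_{j=m+1}^{\treDi}\mu_j^2\ =\ \sum_{j=m+1}^{\treDi}\rho_j^2\,\frac{(\TrSo[j]-\RvSoEr_j)^2}{\sigma_j}\ \geq\ (1-o(1))\,\frac{\gb_m-\gb_{\treDi}}{\ObSoNoL\mEvs[\treDi]}\ \geq\ (1-o(1))\,7C_\Ev\,\treDi ,
\]
so that for small $\ObSoNoL$ the quantity $\Ex_{\TrSo}U_m$ exceeds $3C_\Ev(\treDi-m)\leq3C_\Ev\treDi$ by a fixed positive multiple of $C_\Ev\treDi$; this is the point where both $\rho_j=1-o(1)$ and the constant $8$ in \eqref{ad:de:mp} are needed. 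Since moreover the non-centrality $\sum_{j=m+1}^{\treDi}\mu_j^2$ is itself bounded below by a positive multiple of $\treDi$, the lower-tail bound \eqref{tr:le:te:e1} applied to the family $\{(\So^{\ObSo}_j-\RvSoEr_j)/\sqrt{\sigma_j}:m<j\leq\treDi\}$ shows that outside an $\Ex_{\TrSo}$-event of probability exponentially small in $\treDi$ one has $U_m\geq(1-o(1))\Ex_{\TrSo}U_m$, whence $-\tfrac12 U_m+\tfrac32 C_\Ev(\treDi-m)$ is bounded above by a negative constant times $\treDi$; on the complementary event we bound $p_{\RvDi|\ObSo}(m)\leq1$ by that exceedance probability. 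Splitting $\Ex_{\TrSo}[p_{\RvDi|\ObSo}(m)]$ accordingly and summing over the at most $\DiMa$ indices $m<\meDi$ yields \ref{ad:le:pm:i}; a suitable choice of the free constant in \eqref{tr:le:te:e1} produces the stated exponent $\tfrac{7}{32}C_\Ev\treDi$.

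Part \ref{ad:le:pm:ii} is the mirror image, and we may assume $\peDi<\DiMa$, otherwise the event is empty. For $\treDi<m\leq\DiMa$ with $m>\peDi$, keeping only the $m=\treDi$ term in the denominator of \eqref{ad:po:di} gives $p_{\RvDi|\ObSo}(m)\leq\exp\bigl(\tfrac12 V_m-\tfrac32 C_\Ev(m-\treDi)\bigr)$ with $V_m:=T_m-T_{\treDi}=\sum_{j=\treDi+1}^m(\So^{\ObSo}_j-\RvSoEr_j)^2/\sigma_j\geq0$; this time the penalty $-\tfrac32 C_\Ev(m-\treDi)$ works in our favour and only an upper bound on $V_m$ is needed. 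From $\mu_j^2=\rho_j^2(\TrSo[j]-\RvSoEr_j)^2/\sigma_j\leq\Ev_j^2\ObSoNoL^{-1}(\TrSo[j]-\RvSoEr_j)^2$ (since $\rho_j^2/\sigma_j=\sigma_j\Ev_j^4\ObSoNoL^{-2}\leq\Ev_j^2\ObSoNoL^{-1}$), Assumption~\ref{ad:as:ev}~\eqref{ad:as:ev:a} ($\max_{j>\treDi}\Ev_j^2\leq C_\Ev\mEvs[\treDi]^{-1}$) and $\gb_{\treDi}\leq\treRa$ we obtain $\sum_{j>\treDi}\mu_j^2\leq C_\Ev\treRa/(\ObSoNoL\mEvs[\treDi])$, hence $\Ex_{\TrSo}V_m\leq(m-\treDi)+C_\Ev\treRa/(\ObSoNoL\mEvs[\treDi])$. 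The definition \eqref{ad:de:mp} of $\peDi$ forces $m-\treDi>5L_\Ev\treRa/(\ObSoNoL\mEvs[\treDi])-\treDi\geq4L_\Ev\treRa/(\ObSoNoL\mEvs[\treDi])$, so $\Ex_{\TrSo}V_m\leq(1+\tfrac{C_\Ev}{4L_\Ev})(m-\treDi)$, which stays below $\tfrac32 C_\Ev(m-\treDi)$ by a fixed positive multiple of $m-\treDi$; and $\sum_{j=\treDi+1}^m\rho_j\geq\tfrac{d}{1+d}(m-\treDi)$, which is itself a positive multiple of $\treDi$ (as $m-\treDi\geq4\treDi$). The upper-tail bound \eqref{tr:le:te:e2} then shows that outside an $\Ex_{\TrSo}$-event of probability exponentially small in $m-\treDi$, hence in $\treDi$, one has $\tfrac12 V_m-\tfrac32 C_\Ev(m-\treDi)$ bounded above by a negative constant times $\treDi$. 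Splitting $\Ex_{\TrSo}[p_{\RvDi|\ObSo}(m)]$ as before and summing over the at most $\DiMa$ indices $m>\peDi$ yields \ref{ad:le:pm:ii}.

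\noindent\emph{Main obstacle.} The probabilistic ingredient is routine once the decomposition above is set up; the delicate point is the bookkeeping of numerical constants, i.e.\ arranging that after the tail bounds of Lemma~\ref{tr:le:te} are inserted the exponents $-\tfrac12 U_m+\tfrac32 C_\Ev(\treDi-m)$ and $\tfrac12 V_m-\tfrac32 C_\Ev(m-\treDi)$ are \emph{negative} multiples of $\treDi$, uniformly in $m$. This is exactly the reason the definitions of $\meDi$ and $\peDi$ carry the multiplicative constants $8L_\Ev C_\Ev(1+d^{-1})$ and $5L_\Ev$, and the reason one restricts to $\ObSoNoL<\ObSoNoL_{\TrSy}$ small enough that $\rho_j=1-o(1)$ for $j\leq\treDi$: the losses from $\rho_j<1$ in part~\ref{ad:le:pm:i} and from the slack in Lemma~\ref{tr:le:te} must fit inside the margins those constants create.
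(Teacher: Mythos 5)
Your proposal is correct and follows essentially the same route as the paper's own proof: bound the denominator of $p_{\RvDi|\ObSo}(m)$ by the single term $k=\treDi$, rewrite the resulting exponent as a sum of independent squared Gaussians $\sum_j(\So^{\ObSo}_j-\RvSoEr_j)^2/\sigma_j$ with variances $\rho_j\in[(1+1/d)^{-1},1]$, split the expectation along the tail events of Lemma~\ref{tr:le:te}, and use the margins built into the definitions~\eqref{ad:de:mp} of $\meDi$ and $\peDi$ together with Assumption~\ref{ad:as:ev} to force the exponent to be a negative multiple of $\treDi$. The only cosmetic difference is that you invoke $\rho_j=1-o(1)$ where the paper uses the uniform bound $\rho_j\geq(1+1/d)^{-1}$ (which exactly cancels the $(1+1/d)$ built into the constant $8L_\Ev C_\Ev(1+1/d)$ defining $\meDi$), and you stop short of the bookkeeping that yields the precise exponents $\tfrac{7}{32}C_\Ev$ and $\tfrac{4}{9}C_\Ev$, though your setup does produce them when traced through.
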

\noindent Recall that $\treDi\to\infty$ as $\ObSoNoL\to0$ under Assumption
\ref{ad:as:tr}. If in addition $\treDi/(\log
\DiMa)\to\infty$ as  $\ObSoNoL\to0$ then Lemma \ref{ad:le:pm} states that the posterior distribution of
the thresholding parameter $\RvDi$ is vanishing outside the set
$\{\meDi,\dotsc,\peDi\}$ as $\ObSoNoL\to0$. On the other hand side, the posterior distribution $P_{\RvDiRvSo|\ObSo}$
of $\RvDiRvSo=\suite{\RvDiRvSo}$ associated with the hierarchical
prior is a weighted mixture of the posterior distributions
$\{P_{\DiRvSo|\ObSo}\}_{\Di=1}^{\DiMa}$ studied in section \ref{s:tr}, that is,
$P_{\RvDiRvSo|\ObSo}=\sum_{\Di=1}^{\DiMa}p_{\RvDi|\ObSo}(\Di)P_{\DiRvSo|\ObSo}$.
The next assertion shows that considering  posterior distributions
$\{P_{\DiRvSo|\ObSo}\}_{\Di=\meDi}^{\peDi}$ associated with
thresholding parameters belonging to $\{\meDi,\dotsc,\peDi\}$ only, then their
concentration rate equals $\treRa$ up to a constant.
\begin{lem}\label{ad:le:pv}
If Assumptions \ref{tr:as:pv}, \ref{ad:as:ev} and \ref{ad:as:tr} hold
true then  for all $\ObSoNoL\in(0,\ObSoNoL_{\TrSy})$
\begin{enumerate}[label=\emph{\textbf{(\roman*)}},ref=\emph{\textbf{(\roman*)}}]
\item\label{ad:le:pv:i}$\sum_{\meDi\leq\Di\leq\peDi}\Ex_{\TrSo}
  P_{\DiRvSo[\Di]|\ObSo}\big(\HnormV{\DiRvSo[\Di]-\TrSo}^2>K^o\treRa\big)\leq 74 \exp(-\meDi/36)$;
\item\label{ad:le:pv:ii}$\sum_{\meDi\leq\Di\leq\peDi}\Ex_{\TrSo}
  P_{\DiRvSo[\Di]|\ObSo}\big(\HnormV{\DiRvSo[\Di]-\TrSo}^2<(K^o)^{-1}\treRa
  \big)\leq 4 (K^{\TrSy})^2 \exp(-\meDi/(K^{\TrSy})^2)$,
\end{enumerate}
where $K^{\TrSy}:=10 ((1+1/d)\vee \HnormV{\TrSo-\RvSoEr}^2/d^{2})L^2_\Ev(8C_\Ev(1+1/d)\vee D^{\TrSy}\mEvs[D^{\TrSy}])$ with $D^{\TrSy}:=D^{\TrSy}(\RvSoEr,\TrSo,\Ev):=\ceil{5 L_\Ev/\kappa^{\TrSy}}$.
\end{lem}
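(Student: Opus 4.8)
The plan is to deduce both assertions from Proposition~\ref{tr:pr:pbm} applied separately to each $\Di\in\{\meDi,\dotsc,\peDi\}$, and then to sum the resulting geometric series over~$\Di$. The real work is a preliminary step in which one bounds, uniformly in $\Di\in\{\meDi,\dotsc,\peDi\}$, the four quantities $\gb_\Di$, $\oPoVa$, $\Di\mPoVa$, $\gr_\Di$ by constant multiples of the oracle rate $\treRa$ — from above for all four, and from below for the sum $\gb_\Di+\oPoVa$. This is where Assumptions~\ref{tr:as:pv}, \ref{ad:as:ev}, \ref{ad:as:tr} and the definitions of $\meDi$ and $\peDi$ all come in, and it is the step that needs care; the constant $K^{\TrSy}$ of the statement is exactly what one obtains by tracking these bounds (writing $K^{\TrSy}$ for the constant of the statement).

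\textbf{Step 1 (uniform comparison).} Since $\Di\mapsto\gb_\Di$ is non-increasing, the definition of $\meDi$ gives $\gb_\Di\leq\gb_{\meDi}\leq 8L_\Ev C_\Ev(1+1/d)\treRa$ for all $\Di\geq\meDi$. For the remaining quantities I would first establish $\peDi\leq D^{\TrSy}\treDi$: by the definition of $\peDi$, then $\mEvs[\treDi]\geq\oEvs[\treDi]$, then Assumption~\ref{ad:as:tr},
\begin{equation*}
\frac{\peDi}{\treDi}\;\leq\;\frac{5L_\Ev\,\treRa}{\ObSoNoL\,\treDi\,\mEvs[\treDi]}\;\leq\;\frac{5L_\Ev\,\treRa}{\ObSoNoL\,\treDi\,\oEvs[\treDi]}\;\leq\;\frac{5L_\Ev}{\kappa^{\TrSy}}\;\leq\;D^{\TrSy}.
\end{equation*}
Hence for $\Di\leq\peDi$, monotonicity of $\mEvs$, submultiplicativity (Assumption~\ref{ad:as:ev}), the equivalence $\mEvs[\treDi]\leq L_\Ev\oEvs[\treDi]$ and $\ObSoNoL\treDi\oEvs[\treDi]\leq\treRa$ give $\ObSoNoL\,\Di\,\mEvs[\Di]\leq\ObSoNoL\,D^{\TrSy}\treDi\,\mEvs[D^{\TrSy}]\,\mEvs[\treDi]\leq L_\Ev D^{\TrSy}\mEvs[D^{\TrSy}]\treRa$. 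Combined with the elementary bounds recorded after Assumption~\ref{tr:as:pv} (namely $\oPoVa\leq\ObSoNoL\Di\oEvs[\Di]\leq\ObSoNoL\Di\mEvs[\Di]$, $\Di\mPoVa\leq\ObSoNoL\Di\mEvs[\Di]$ and $\gr_\Di\leq d^{-2}\HnormV{\TrSo-\RvSoEr}^2\ObSoNoL\mEvs[\Di]$), this bounds each of $\oPoVa$, $\Di\mPoVa$, $\gr_\Di$ by $(1\vee d^{-2}\HnormV{\TrSo-\RvSoEr}^2)L_\Ev D^{\TrSy}\mEvs[D^{\TrSy}]\treRa$. For the lower bound I split cases: if $\Di\leq\treDi$ then $\gb_\Di\geq\gb_{\treDi}\geq\kappa^{\TrSy}\treRa$; if $\treDi<\Di\leq\peDi$ then, since $m\mapsto\ObSoNoL\,m\,\oEvs[m]$ is non-decreasing and $\oPoVa\geq(1+1/d)^{-1}\ObSoNoL\Di\oEvs[\Di]$, one gets $\oPoVa\geq(1+1/d)^{-1}\ObSoNoL\treDi\oEvs[\treDi]\geq(1+1/d)^{-1}\kappa^{\TrSy}\treRa$. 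In both cases $\gb_\Di+\oPoVa\geq(1+1/d)^{-1}\kappa^{\TrSy}\treRa$. One checks that with these estimates the constant $K^{\TrSy}$ of the statement is admissible in the two steps below.

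\textbf{Step 2 (assertion \ref{ad:le:pv:i}).} For $\Di\in\{\meDi,\dotsc,\peDi\}$ the upper bounds of Step~1 yield $\gb_\Di+3\oPoVa+3\Di\mPoVa/2+4\gr_\Di\leq K^{\TrSy}\treRa$, so the event $\{\HnormV{\DiRvSo[\Di]-\TrSo}^2>K^{\TrSy}\treRa\}$ is contained in the one of \eqref{tr:pr:pbm:e1}, whence $\Ex_{\TrSo}P_{\DiRvSo[\Di]|\ObSo}(\HnormV{\DiRvSo[\Di]-\TrSo}^2>K^{\TrSy}\treRa)\leq 2\exp(-\Di/36)$. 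Summing over $\meDi\leq\Di\leq\peDi$, bounding by the full geometric tail and using $1/(1-e^{-x})\leq 1/x+1$ for $0<x\leq 1$,
\begin{equation*}
\sum_{\Di\geq\meDi}2\exp(-\Di/36)\;=\;\frac{2\exp(-\meDi/36)}{1-\exp(-1/36)}\;\leq\;2(36+1)\exp(-\meDi/36)\;=\;74\exp(-\meDi/36).
\end{equation*}

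\textbf{Step 3 (assertion \ref{ad:le:pv:ii}).} Set $c:=\sqrt{2}/K^{\TrSy}$; then $c<1/5$, so Proposition~\ref{tr:pr:pbm} applies. By the lower bound on $\gb_\Di+\oPoVa$ and the upper bounds on $\Di\mPoVa$ and $\gr_\Di$ from Step~1, the constant $K^{\TrSy}$ is calibrated so that $\gb_\Di+\oPoVa-4c(\Di\mPoVa+\gr_\Di)\geq(K^{\TrSy})^{-1}\treRa$ for all $\Di\in\{\meDi,\dotsc,\peDi\}$; hence $\{\HnormV{\DiRvSo[\Di]-\TrSo}^2<(K^{\TrSy})^{-1}\treRa\}$ is contained in the event of \eqref{tr:pr:pbm:e2}, and $\Ex_{\TrSo}P_{\DiRvSo[\Di]|\ObSo}(\HnormV{\DiRvSo[\Di]-\TrSo}^2<(K^{\TrSy})^{-1}\treRa)\leq 2\exp(-c^2\Di/2)=2\exp(-\Di/(K^{\TrSy})^2)$. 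Summing over $\meDi\leq\Di\leq\peDi$ and using $1/(1-e^{-x})\leq 2/x$ for $0<x\leq 1$ with $x=(K^{\TrSy})^{-2}$,
\begin{equation*}
\sum_{\Di\geq\meDi}2\exp(-\Di/(K^{\TrSy})^2)\;=\;\frac{2\exp(-\meDi/(K^{\TrSy})^2)}{1-\exp(-1/(K^{\TrSy})^2)}\;\leq\;4(K^{\TrSy})^2\exp(-\meDi/(K^{\TrSy})^2),
\end{equation*}
which is the claim. The genuinely delicate point throughout is Step~1 — in particular the inequality $\peDi\leq D^{\TrSy}\treDi$ followed by the submultiplicative control $\mEvs[\Di]\leq\mEvs[D^{\TrSy}]\mEvs[\treDi]$, and then matching all four quantities with $\treRa$ to the precision needed for $K^{\TrSy}$; Steps~2 and~3 are then routine bookkeeping and summation.
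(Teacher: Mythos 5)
Your overall strategy is the paper's: apply the finite-sample concentration bounds of Section~\ref{s:tr} to each $\Di\in\{\meDi,\dotsc,\peDi\}$, reduce everything to the two uniform comparisons $\gb_\Di\leq 8L_\Ev C_\Ev(1+1/d)\treRa$ (definition of $\meDi$ plus monotonicity of $\gb$) and $\ObSoNoL\,\Di\,\mEvs[\Di]\leq L_\Ev D^{\TrSy}\mEvs[D^{\TrSy}]\treRa$ (via $\peDi\leq D^{\TrSy}\treDi$ and submultiplicativity), and then sum a geometric tail. Your derivation of $\peDi\leq D^{\TrSy}\treDi$, your upper bounds in Step~1, and both summation estimates are correct; that you invoke Proposition~\ref{tr:pr:pbm} directly where the paper passes through Corollary~\ref{tr:co:pb} is immaterial.

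The one genuine problem is the lower bound in Steps~1 and~3, which you cannot actually calibrate to the stated $K^{\TrSy}$. You derive $\gb_\Di+\oPoVa\geq(1+1/d)^{-1}\kappa^{\TrSy}\treRa$ from Assumption~\ref{ad:as:tr}, and with $c=\sqrt2/K^{\TrSy}$ the inequality $\gb_\Di+\oPoVa-4c(\Di\mPoVa+\gr_\Di)\geq(K^{\TrSy})^{-1}\treRa$ then requires, writing $A$ for your uniform bound on $(\Di\mPoVa+\gr_\Di)/\treRa$, that $K^{\TrSy}\geq(1+1/d)(1+8\sqrt2 A)/\kappa^{\TrSy}$. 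Since $A$ is proportional to $D^{\TrSy}\mEvs[D^{\TrSy}]$ and $1/\kappa^{\TrSy}$ is itself of order $D^{\TrSy}/(5L_\Ev)$, this forces $K^{\TrSy}$ to grow like $(D^{\TrSy})^2\mEvs[D^{\TrSy}]$, whereas the constant in the statement contains $D^{\TrSy}\mEvs[D^{\TrSy}]$ only to the first power; so for small $\kappa^{\TrSy}$ your ``one checks'' fails. The fix — and what the paper does — is to drop $\kappa^{\TrSy}$ from the lower bound entirely: by the definition \eqref{tr:de:tre} of $\treRa$ as a minimum over all dimensions, $[\gb_\Di\vee\ObSoNoL\,\Di\,\oEvs[\Di]]\geq\treRa$ for every $\Di\in\Nz$, whence $\gb_\Di+\oPoVa\geq(1+1/d)^{-1}[\gb_\Di\vee\ObSoNoL\,\Di\,\oEvs[\Di]]\geq(1+1/d)^{-1}\treRa$, uniformly and without any $\kappa^{\TrSy}$ loss. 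With this sharper and simpler bound your case split around $\treDi$ disappears, the required inequality becomes $K^{\TrSy}\geq(1+1/d)(1+8\sqrt2A)$, which the stated constant does absorb, and Assumption~\ref{ad:as:tr} is needed only where you already use it correctly, namely to guarantee $\peDi\leq D^{\TrSy}\treDi$.
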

\noindent From Lemma \ref{ad:le:pm} and \ref{ad:le:pv} we derive next
upper and lower bounds for the concentration rate of the  posterior distribution $P_{\RvDiRvSo|\ObSo}$
by decomposing the weighted mixture into three parts with respect to
$\meDi$ and $\peDi$  which we bound separately.
\begin{theo}[Oracle posterior concentration rate]\label{ad:th:ora}
Let Assumptions \ref{tr:as:pv}, \ref{ad:as:ev} and \ref{ad:as:tr} hold
true. If in addition $(\log \DiMa)/\treDi\to0$ as
$\ObSoNoL\to0$, then
\[\lim_{\ObSoNoL\to0}\Ex_{\TrSo}P_{\RvDiRvSo|\ObSo}(
 (K^{\TrSy})^{-1} \treRa\leq \HnormV{\RvDiRvSo-\TrSo}^2\leq K^{\TrSy} \treRa)=1\]
where $K^{\TrSy}$ is given in Lemma \ref{ad:le:pv}.
\end{theo}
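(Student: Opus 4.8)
The plan is to exploit that the marginal posterior is the finite mixture $P_{\RvDiRvSo|\ObSo}=\sum_{\Di=1}^{\DiMa}p_{\RvDi|\ObSo}(\Di)\,P_{\DiRvSo[\Di]|\ObSo}$ and to bound the two one-sided events separately. Since the complement of $\{(K^{\TrSy})^{-1}\treRa\leq\HnormV{\RvDiRvSo-\TrSo}^2\leq K^{\TrSy}\treRa\}$ is contained in $\{\HnormV{\RvDiRvSo-\TrSo}^2> K^{\TrSy}\treRa\}\cup\{\HnormV{\RvDiRvSo-\TrSo}^2< (K^{\TrSy})^{-1}\treRa\}$, a union bound reduces the claim to showing that $\Ex_{\TrSo}P_{\RvDiRvSo|\ObSo}$ of each of these two events tends to zero. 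Fixing such an event $A$ and recalling the deterministic bracketing indices $\meDi,\peDi$ of \eqref{ad:de:mp} (which satisfy $1\leq\meDi\leq\treDi\leq\peDi\leq\DiMa$ once $\ObSoNoL$ is small enough that $\treDi\leq\DiMa$, legitimate because $\treRa=o(1)$), I would split
\[
\Ex_{\TrSo}P_{\RvDiRvSo|\ObSo}(A)=\Ex_{\TrSo}\sum_{\Di=1}^{\DiMa}p_{\RvDi|\ObSo}(\Di)\,P_{\DiRvSo[\Di]|\ObSo}(A)
\]
according to the three ranges $1\leq\Di<\meDi$, $\meDi\leq\Di\leq\peDi$ and $\peDi<\Di\leq\DiMa$.

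On the two outer ranges I would bound $P_{\DiRvSo[\Di]|\ObSo}(A)\leq1$ and use $\sum_{1\leq\Di<\meDi}p_{\RvDi|\ObSo}(\Di)=P_{\RvDi|\ObSo}(1\leq\RvDi<\meDi)$ and $\sum_{\peDi<\Di\leq\DiMa}p_{\RvDi|\ObSo}(\Di)=P_{\RvDi|\ObSo}(\peDi<\RvDi\leq\DiMa)$, so that by Lemma \ref{ad:le:pm} these contributions are each at most $2\exp(-\tfrac{C_\Ev}{5}\treDi+\log\DiMa)$ in $\Ex_{\TrSo}$-expectation. On the middle range I would instead bound $p_{\RvDi|\ObSo}(\Di)\leq1$ and interchange the finite sum with the expectation, so that the contribution is dominated by $\sum_{\meDi\leq\Di\leq\peDi}\Ex_{\TrSo}P_{\DiRvSo[\Di]|\ObSo}(A)$; this is at most $74\exp(-\meDi/36)$ when $A=\{\HnormV{\RvDiRvSo-\TrSo}^2> K^{\TrSy}\treRa\}$ by Lemma \ref{ad:le:pv}\,\ref{ad:le:pv:i}, and at most $4(K^{\TrSy})^2\exp(-\meDi/(K^{\TrSy})^2)$ when $A=\{\HnormV{\RvDiRvSo-\TrSo}^2< (K^{\TrSy})^{-1}\treRa\}$ by Lemma \ref{ad:le:pv}\,\ref{ad:le:pv:ii}. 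Adding the three pieces, each one-sided expectation is bounded by a quantity of the form $c_1\exp(-\tfrac{C_\Ev}{5}\treDi+\log\DiMa)+c_2\exp(-c_3\meDi)$ with constants depending only on $\TrSo,\Ev,\RvSoEr,d$.

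It then remains to let $\ObSoNoL\to0$. Under Assumption \ref{ad:as:tr} one has $\gb_{\treDi}\geq\kappa^{\TrSy}\treRa>0$, whence $\gb_k>0$ for all $k$ and $\treDi\to\infty$; the same assumption also forces $\meDi\to\infty$, as noted right after \eqref{ad:de:mp}. Hence $\exp(-c_3\meDi)\to0$, while the additional hypothesis $(\log\DiMa)/\treDi\to0$ combined with $\treDi\to\infty$ yields $-\tfrac{C_\Ev}{5}\treDi+\log\DiMa\to-\infty$ and thus $\exp(-\tfrac{C_\Ev}{5}\treDi+\log\DiMa)\to0$. Both one-sided expectations therefore vanish and the two-sided concentration statement follows; observe that Assumptions \ref{tr:as:pv}, \ref{ad:as:ev} and \ref{ad:as:tr} assumed here are precisely the hypotheses required to invoke Lemmas \ref{ad:le:pm} and \ref{ad:le:pv}.

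The mixture decomposition itself is essentially bookkeeping; the substantive content is already encapsulated in Lemmas \ref{ad:le:pm} and \ref{ad:le:pv}, which say respectively that the posterior of the random dimension $\RvDi$ leaves the deterministic bracket $\{\meDi,\dots,\peDi\}$ only with exponentially small probability, and that every fixed-dimension posterior with index in that bracket already concentrates at the oracle rate $\treRa$. The one genuinely new point — and the step I would treat most carefully — is that the two exponential error terms, one governed by $\treDi$ (through $\DiMa$) and the other by $\meDi$, must be driven to zero simultaneously; this is exactly what the hypotheses $(\log\DiMa)/\treDi\to0$ and $\treDi\to\infty$ (the latter automatic under Assumption \ref{ad:as:tr}) secure, and it is where the assumptions of the theorem are used in full.
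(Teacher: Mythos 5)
Your proposal is correct and follows essentially the same route as the paper's proof: the same mixture decomposition into the three ranges determined by $\meDi$ and $\peDi$, the same use of Lemma \ref{ad:le:pm} on the outer ranges (bounding the fixed-dimension posterior probabilities by one) and of Lemma \ref{ad:le:pv} on the middle range (bounding the mixture weights by one), and the same limiting argument via $\meDi,\treDi\to\infty$ and $(\log\DiMa)/\treDi\to0$.
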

\noindent We shall emphasise that the Bayes
estimator $\hSo:=\suite{\hSo}:=\Ex[\RvDiRvSo|\ObSo]$ associated with the
hierarchical prior and given by
$\hSo_j=\RvSoEr_j$ for $j>\DiMa$ and $\hSo_j=\RvSoEr_j\,P(1\leq \RvDi<
j\vert\ObSo) + \So_j^{\ObSo}\,P(j\leq \RvDi\leq \DiMa\vert\ObSo)$ for
all $1\leq j\leq \DiMa$, does not take into account any
prior information related to the parameter of interest, and hence it
is fully data-driven. The next assertion provides an upper bound of
its MISE.
\begin{theo}[Oracle optimal Bayes estimator]\label{ad:th:bo}
Under Assumptions \ref{tr:as:pv}, \ref{ad:as:ev} and \ref{ad:as:tr}  consider the Bayes estimator
$\hSo:=\Ex[\RvDiRvSo|\ObSo]$. If  in addition $\log
(\DiMa/\treRa)/\treDi\to0$  as $\ObSoNoL\to0$, then there exists a
constant $K^{\TrSy}:=K^{\TrSy}(\TrSo,\RvSoEr,\Ev,
d, L)<\infty$ such that $\Ex_{\TrSo} \HnormV{\hSo-\TrSo}^2\leq K^{\TrSy}
\treRa$  for all $\ObSoNoL\in(0,\ObSoNoL_{\TrSy})$.
\end{theo}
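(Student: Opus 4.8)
The plan is to bound the MISE $\Ex_{\TrSo}\HnormV{\hSo-\TrSo}^2$ by exploiting that the hierarchical Bayes estimator is a posterior-weighted mixture of the sieve Bayes estimators, $\hSo = \sum_{m=1}^{\DiMa} p_{\RvDi|\ObSo}(m)\,\hSo^m$, and that by Jensen's inequality (convexity of $\HnormV{\cdot-\TrSo}^2$) one has $\HnormV{\hSo-\TrSo}^2 \le \sum_{m=1}^{\DiMa} p_{\RvDi|\ObSo}(m)\,\HnormV{\hSo^m-\TrSo}^2$. Taking $\Ex_{\TrSo}$ and splitting the sum over $m$ at $\meDi$ and $\peDi$ yields three terms: the central block $\meDi\le m\le\peDi$, the lower tail $1\le m<\meDi$, and the upper tail $\peDi<m\le\DiMa$. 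On the central block I would use the deterministic bound from Theorem~\ref{tr:pr:bo}\ref{tr:pr:bo:i}, after checking that for $\meDi\le m\le\peDi$ the quantity $\gb_m\vee\ObSoNoL\,m\,\oEvs[m]$ is bounded by a constant times $\treRa$ (this follows from the defining inequalities of $\meDi$ and $\peDi$ in \eqref{ad:de:mp} together with Assumption~\ref{ad:as:ev}\ref{ad:as:ev:c}); since $\sum_m p_{\RvDi|\ObSo}(m)\le1$, this block contributes $O(\treRa)$.

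For the two tail blocks the strategy is Cauchy--Schwarz: $\Ex_{\TrSo}\big[p_{\RvDi|\ObSo}(m)\HnormV{\hSo^m-\TrSo}^2\big]\le \big(\Ex_{\TrSo}\HnormV{\hSo^m-\TrSo}^4\big)^{1/2}\big(\Ex_{\TrSo}p_{\RvDi|\ObSo}(m)^2\big)^{1/2}\le \big(\Ex_{\TrSo}\HnormV{\hSo^m-\TrSo}^4\big)^{1/2}\big(\Ex_{\TrSo}p_{\RvDi|\ObSo}(m)\big)^{1/2}$, using $p_{\RvDi|\ObSo}(m)\le1$. The fourth moment $\Ex_{\TrSo}\HnormV{\hSo^m-\TrSo}^4$ is polynomial in $m$ and in $\ObSoNoL^{-1}$ (for the improper/projection case it is essentially a non-central $\chi^2$-type quantity of order $(\ObSoNoL\,m\,\oEvs[m] + \gb_m)^2$ plus its variance, hence at worst polynomially large, say $O(\DiMa^{4}\ObSoNoL^{-2})$ up to constants), while by Lemma~\ref{ad:le:pm} the summed tail probabilities $\sum_m\Ex_{\TrSo}p_{\RvDi|\ObSo}(m)$ are bounded by $2\DiMa\exp(-C_\Ev\treDi/5)$. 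Thus each tail block is bounded by $\mathrm{poly}(\DiMa,\ObSoNoL^{-1})\cdot\exp(-c\,\treDi)$, and the hypothesis $\log(\DiMa/\treRa)/\treDi\to0$ — which forces $\log\DiMa = o(\treDi)$ and $\log\ObSoNoL^{-1}=o(\treDi)$ since $\treRa\ge \ObSoNoL$ up to constants — makes this exponentially small relative to $\treRa$, hence $o(\treRa)$, so it is absorbed into the constant $K^{\TrSy}$.

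The main obstacle I anticipate is getting a clean, self-contained bound on the fourth moment $\Ex_{\TrSo}\HnormV{\hSo^m-\TrSo}^4$ uniformly in $m\le\DiMa$: one must control $\sum_{j\le m}\sigma_j^2$-type variance terms and the bias $\gb_m$, and verify that no worse-than-polynomial blow-up in $\ObSoNoL^{-1}$ occurs (here Assumption~\ref{tr:as:pv}, which gives $\sigma_j\asymp\ObSoNoL\Evs_j$ and $\ObSoNoL\mEvs[m]\le\Evs_1$ for $m\le\DiMa$, keeps everything polynomial). A secondary technical point is to make precise the inequality $\treRa\ge\mathrm{const}\cdot\ObSoNoL$ that turns $\log(1/\treRa)=o(\treDi)$ into $\log\ObSoNoL^{-1}=o(\treDi)$; this follows because $\treRa\ge\ObSoNoL\,\treDi\,\oEvs[\treDi]\ge\ObSoNoL\,\oEvs[1]=\ObSoNoL\Evs_1$. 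Once these bounds are in place, summing the three blocks gives $\Ex_{\TrSo}\HnormV{\hSo-\TrSo}^2\le K^{\TrSy}\treRa$ for all sufficiently small $\ObSoNoL$, with $K^{\TrSy}$ depending only on $\TrSo,\RvSoEr,\Ev,d,L$ through the constants already appearing in Theorem~\ref{tr:pr:bo} and Lemma~\ref{ad:le:pm}.
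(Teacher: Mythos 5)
There is one genuine gap, in your central block. Part (i) of Theorem \ref{tr:pr:bo} (like Proposition \ref{tr:pr:bc}) bounds the \emph{expectation} $\Ex_{\TrSo}\HnormV{\hSo^{m}-\TrSo}^2$; it is not a deterministic bound on the random loss. Since the weights $p_{\RvDi|\ObSo}(m)$ and the losses $\HnormV{\hSo^{m}-\TrSo}^2$ are both functions of $\ObSo$ and are correlated, the fact that $\sum_{m}p_{\RvDi|\ObSo}(m)\le1$ does not allow you to conclude that $\Ex_{\TrSo}\bigl[\sum_{\meDi\le m\le\peDi}p_{\RvDi|\ObSo}(m)\HnormV{\hSo^{m}-\TrSo}^2\bigr]$ is controlled by $\max_{\meDi\le m\le \peDi}\Ex_{\TrSo}\HnormV{\hSo^{m}-\TrSo}^2$; as written this step fails. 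It is repairable inside your own framework by pathwise monotonicity: for every $\meDi\le m\le\peDi$ one has, pathwise, $\HnormV{\hSo^{m}-\TrSo}^2=\gb_{m}+\sum_{j\le m}(\So^{\ObSo}_j-\TrSo[j])^2\le\gb_{\meDi}+\sum_{j\le\peDi}(\So^{\ObSo}_j-\TrSo[j])^2$, so the central block is bounded by $\gb_{\meDi}+\Ex_{\TrSo}\sum_{j\le\peDi}(\So^{\ObSo}_j-\TrSo[j])^2\le\gb_{\meDi}+\sum_{j\le\peDi}\sigma_j+\gr_{\peDi}$, and each of these three terms is $O(\treRa)$ by the defining inequalities \eqref{ad:de:mp} of $\meDi$ and $\peDi$, Assumption \ref{ad:as:ev}, and the elementary bounds following Assumption \ref{tr:as:pv} (note $\peDi\le D^{\TrSy}\treDi$, hence $\mEvs[\peDi]\le\mEvs[D^{\TrSy}]\mEvs[\treDi]$ and $\ObSoNoL\,\peDi\,\oEvs[\peDi]\le L_\Ev D^{\TrSy}\mEvs[D^{\TrSy}]\treRa$). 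With this replacement your argument goes through.

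Apart from that, your route is genuinely different from the paper's, although the architecture (split at $\meDi$ and $\peDi$, main term from the central range, exponentially small tails absorbed via $\log(\DiMa/\treRa)/\treDi\to0$) is the same. You use the mixture representation $\hSo=\sum_{m}p_{\RvDi|\ObSo}(m)\hSo^{m}$, a global Jensen step, and Cauchy--Schwarz against fourth moments on the tails; the paper instead decomposes $\hSo_j-\TrSo[j]$ coordinate by coordinate into a stochastic and a bias part, applies Jensen at the level of each coordinate, and handles the correlation between the noise and the posterior weights by the positive-part truncation $\Ex_{\TrSo}(S_{\DiMa}-2\Evs_1\DiMa)_+$ of Lemma \ref{ad:le:te} combined with Lemma \ref{ad:le:pm}, thereby avoiding fourth moments altogether. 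Your tail bound is workable: the fourth moments are at most polynomial in $\DiMa$ because $\sigma_j\le\ObSoNoL\Evs_j\le\Evs_1$ for all $j\le\DiMa$; the per-$m$ bounds $\Ex_{\TrSo}p_{\RvDi|\ObSo}(m)\le2\exp(-7C_\Ev\treDi/32)$, respectively $2\exp(-4C_\Ev\treDi/9)$, available from the proof of Lemma \ref{ad:le:pm} (or a Cauchy--Schwarz step over $m$ applied to the summed bound) survive the square root; and $\treRa\ge\ObSoNoL\Evs_1$ turns the hypothesis into $\log\DiMa+\log(1/\ObSoNoL)=o(\treDi)$, which kills the polynomial prefactors. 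What your approach buys is that it bypasses the explicit coordinatewise identity for $\hSo_j-\TrSo[j]$ and Lemma \ref{ad:le:bo:rem}; what it costs is the extra moment computation and the central-block repair above.
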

Both Theorems, \ref{ad:th:ora} and \ref{ad:th:bo} hold true only under
Assumption \ref{ad:as:tr}, which we have seen before imposes an
additional restriction on the parameter of
interest $\TrSo$, i.e., its components differ from the components of the prior mean $\RvSoEr$
infinitely many times. However, for all parameters of interest
satisfying Assumption \ref{ad:as:tr}, the hierarchical prior sequence
allows to recover the oracle posterior concentration rate and the
fully data driven Bayes estimator attains the oracle rate.  {In the last part
of this section we show that  for all $\TrSo\in\cwrSo$  the
 posterior concentration rate and the MISE of the Bayes estimator
 associated with the hierarchical prior are bounded  from above  by the
minimax rate $\oeRa$ up to a constant. In other words, the fully data-driven
hierarchical prior and the  associated Bayes estimator are
minimax-rate optimal.}

Recall the definition \eqref{tr:de:oe} of $\oeDi$ and
  $\oeRa$.  Consider the prior distribution
$P_{\RvDi}$ of the thresholding parameter $\RvDi$, and observe that
there exists $\ObSoNoL_\star$ such that $\oeDi\leq \DiMa$  for all
$\ObSoNoL\in(0,\ObSoNoL_\star)$ since $\oeRa=o(1)$ as $\ObSoNoL\to0$.
Remark that $\ObSoNoL\,\oeDi\mEvs[\oeDi] (\oeRa)^{-1}\leq
\mEvs[\oeDi](\oEvs[\oeDi])^{-1}\leq L_{\Ev}$ with $L_\Ev$
depending only on $\Ev$ due to Assumption
\ref{ad:as:ev} \eqref{ad:as:ev:c}, i.e.,  condition \eqref{tr:as:mi:ac} holds true uniformly for all parameters
$\So\in\Hspace$.  If we assume in addition that the sequence
of prior variances satisfies Assumption \ref{tr:as:pv} and that Assumption
\ref{tr:as:mi} holds true, then the
conditions of Theorem \ref{tr:th:opc} are satisfied and $\oeRa$
provides up to a constant an upper and lower bound of  the posterior
concentration rate associated with the minimax prior sub-family $\{P_{\oeDi}\}_{\oeDi}$. On the other hand side, the
posterior distribution $P_{\RvDi|\ObSo}$ of the thresholding
parameter $\RvDi$ is concentrating around the minimax-optimal dimension
parameter $\oeDi$ as $\ObSoNoL$ tends to zero. To be more precise, for
$\ObSoNoL\in(0,\ObSoNoL_\star)$ let us define
\begin{multline}\label{ad:de:omp}
\moeDi:=\min\set{m\in\set{1,\dotsc,\oeDi}: \gb_\Di\leq 8 L_{\Ev}C_{\Ev}(1+1/d)(1\vee\rSo)\oeRa}\quad\text{and}\\\poeDi:=\max\set{m\in\set{\oeDi,\dotsc,\DiMa}: \Di\leq 5L_{\Ev}(\ObSoNoL\mEvs[\oeDi])^{-1}(1\vee\rSo) \oeRa }
\end{multline}
where  the defining sets are not empty under  Assumption
\ref{ad:as:ev} since $8  L_{\Ev}C_{\Ev}(1+1/d)(1\vee\rSo)\oeRa\geq 8
L_{\Ev}C_{\Ev}(1+1/d)\rSo\wSo_{\oeDi}\geq 8
  L_{\Ev}C_{\Ev}(1+1/d)\gb_{\oeDi}\geq \gb_{\oeDi}$ and  $5L_{\Ev}(\ObSoNoL\mEvs[\oeDi])^{-1} (1\vee\rSo)\oeRa \geq
 5\oeDi\geq \oeDi$.  Moreover, it is again straightforward to see that $\moeDi\to \infty$ as
 $\ObSoNoL\to 0$.
\begin{lem}\label{ad:le:opm}
If Assumption \ref{tr:as:pv} and \ref{ad:as:ev} hold true then for all
$\TrSo\in\cwrSo$ and $\ObSoNoL\in(0,\ObSoNoL_\star)$
\begin{enumerate}[label=\emph{\textbf{(\roman*)}},ref=\emph{\textbf{(\roman*)}}]
\item\label{ad:le:opm:i} $\Ex_{\TrSo}P_{\RvDi|\ObSo}(1\leq  \RvDi<\moeDi)
\leq
  2\exp\big(-\frac{C_{\Ev}(1\vee\rSo)}{5}\oeDi+\log\DiMa\big)$;
\item\label{ad:le:opm:ii}
  $\Ex_{\TrSo}P_{\RvDi|\ObSo}(\poeDi<
  \RvDi\leq\DiMa)
  \leq
  2\exp\big(-\frac{C_{\Ev}(1\vee\rSo)}{5}\oeDi+\log\DiMa\big)$.
\end{enumerate}
\end{lem}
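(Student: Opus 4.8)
The plan is to run the argument along the lines of Lemma~\ref{ad:le:pm}, only replacing the oracle objects $\treDi,\treRa,\meDi,\peDi$ by their minimax analogues $\oeDi,\oeRa,\moeDi,\poeDi$ and exploiting, for $\TrSo\in\cwrSo$, the two elementary facts $\gb_\Di\leq\rSo\wSo_{\Di}\leq(1\vee\rSo)\oeRa$ (the second by definition of $\oeRa$) and $\HnormV{\TrSo-\RvSoEr}^2\leq\rSo$. Fix $\TrSo\in\cwrSo$ and $\ObSoNoL\in(0,\ObSoNoL_\star)$, so that $\oeDi\leq\DiMa$ and the quantities in \eqref{ad:de:omp} are well defined. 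The two workhorses are: (a)~the closed form \eqref{ad:po:di}, which, since $\normV{\hSo^m-\RvSoEr}_{\sigma}^2=\sum_{j\leq m}(\So^{\ObSo}_j-\RvSoEr_j)^2/\sigma_j$, gives for $1\leq m<m'\leq\DiMa$
\[
\frac{p_{\RvDi|\ObSo}(m)}{p_{\RvDi|\ObSo}(m')}=\exp\Bigl(\tfrac{3C_\Ev}{2}(m'-m)-\tfrac12\sum_{j=m+1}^{m'}(\So^{\ObSo}_j-\RvSoEr_j)^2/\sigma_j\Bigr);
\]
and (b)~the fact that, under $\Ex_{\TrSo}$, the variables $(\So^{\ObSo}_j-\RvSoEr_j)/\sigma_j^{1/2}$ are independent and normal with mean $\sigma_j^{1/2}\Ev_j^2\ObSoNoL^{-1}(\TrSo[j]-\RvSoEr_j)$ and variance $\sigma_j\Ev_j^2\ObSoNoL^{-1}\in[(1+1/d)^{-1},1]$ (using the elementary bounds under Assumption~\ref{tr:as:pv}), so every partial sum appearing above is a sum of independent squared Gaussians to which Lemma~\ref{tr:le:te} applies with $t_m=1$.

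For \ref{ad:le:opm:i}, using $p_{\RvDi|\ObSo}(\oeDi)\leq1$ I would bound $P_{\RvDi|\ObSo}(1\leq\RvDi<\moeDi)\leq\sum_{m<\moeDi}p_{\RvDi|\ObSo}(m)/p_{\RvDi|\ObSo}(\oeDi)$ and, for each $m<\moeDi$, set $S_m:=\sum_{j=m+1}^{\oeDi}(\So^{\ObSo}_j-\RvSoEr_j)^2/\sigma_j$. By definition of $\moeDi$ one has $\gb_m>8L_\Ev C_\Ev(1+1/d)(1\vee\rSo)\oeRa$ for $m<\moeDi$, while $\gb_{\oeDi}\leq(1\vee\rSo)\oeRa$, so
\[
\Ex_{\TrSo}[S_m]\ \geq\ (1+1/d)^{-1}\ObSoNoL^{-1}\mEvs[\oeDi]^{-1}(\gb_m-\gb_{\oeDi})\ \geq\ 7\,C_\Ev(1\vee\rSo)\,\oeDi,
\]
using $\oeRa\geq\ObSoNoL\oeDi\oEvs[\oeDi]$ and $\mEvs[\oeDi]\leq L_\Ev\oEvs[\oeDi]$ (Assumption~\ref{ad:as:ev}). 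Applying the lower tail \eqref{tr:le:te:e1} with $c=1/4$ shows that, off an event of $\Ex_{\TrSo}$-probability at most $\exp(-\tfrac{7}{32}C_\Ev(1\vee\rSo)\oeDi)\leq\exp(-\tfrac15 C_\Ev(1\vee\rSo)\oeDi)$, one has $S_m\geq\tfrac72 C_\Ev(1\vee\rSo)\oeDi$ and hence $p_{\RvDi|\ObSo}(m)/p_{\RvDi|\ObSo}(\oeDi)\leq\exp(\tfrac{3C_\Ev}{2}\oeDi-\tfrac74 C_\Ev(1\vee\rSo)\oeDi)\leq\exp(-\tfrac15 C_\Ev(1\vee\rSo)\oeDi)$ (the last step holds because $1\vee\rSo\geq1$). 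Intersecting these events over $m<\moeDi$, bounding $P_{\RvDi|\ObSo}(\cdot)$ crudely by $1$ on the complement, and using $\moeDi-1<\oeDi\leq\DiMa$, one arrives at the bound $2\exp(-\tfrac{C_\Ev(1\vee\rSo)}{5}\oeDi+\log\DiMa)$ of \ref{ad:le:opm:i}.

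Part \ref{ad:le:opm:ii} is the mirror image and, exactly as in Lemma~\ref{ad:le:pm}, uses the upper tail \eqref{tr:le:te:e2} instead. Here $P_{\RvDi|\ObSo}(\poeDi<\RvDi\leq\DiMa)\leq\sum_{m>\poeDi}p_{\RvDi|\ObSo}(m)/p_{\RvDi|\ObSo}(\oeDi)$, and for $m>\poeDi$ the ratio equals $\exp(\tfrac12 S'_m-\tfrac{3C_\Ev}{2}(m-\oeDi))$ with $S'_m:=\sum_{j=\oeDi+1}^{m}(\So^{\ObSo}_j-\RvSoEr_j)^2/\sigma_j$. Now $(\So^{\ObSo}_j-\RvSoEr_j)^2/\sigma_j$ has $\Ex_{\TrSo}$-mean at most $\Ev_j^2\ObSoNoL^{-1}(\TrSo[j]-\RvSoEr_j)^2+1$, so by $\max_{j>\oeDi}\Ev_j^2\leq C_\Ev\mEvs[\oeDi]^{-1}$ (Assumption~\ref{ad:as:ev}) and $\gb_{\oeDi}\leq(1\vee\rSo)\oeRa$,
\[
\Ex_{\TrSo}[S'_m]\ \leq\ C_\Ev(1\vee\rSo)\ObSoNoL^{-1}\mEvs[\oeDi]^{-1}\oeRa+(m-\oeDi).
\]
The definition of $\poeDi$ forces $m>5L_\Ev(\ObSoNoL\mEvs[\oeDi])^{-1}(1\vee\rSo)\oeRa\geq5\oeDi$, hence $m-\oeDi\geq\tfrac45 m$, so the penalty $\tfrac{3C_\Ev}{2}(m-\oeDi)$ exceeds $\tfrac12\Ex_{\TrSo}[S'_m]$ by a margin of order $C_\Ev m\geq C_\Ev\oeDi$; a single application of \eqref{tr:le:te:e2} then makes each ratio at most $\exp(-\tfrac{C_\Ev(1\vee\rSo)}{5}\oeDi)$ outside an event of comparably small probability, and summing over the at most $\DiMa$ indices $m$ (again bounding $P_{\RvDi|\ObSo}(\cdot)$ by $1$ on the exceptional set) gives the stated bound.

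The only genuinely delicate step, exactly as in Lemma~\ref{ad:le:pm}, is the numerical bookkeeping: one has to verify that the model penalty $\tfrac{3C_\Ev}{2}$ and the concentration of $S_m$ (respectively $S'_m$) separate with a gap that is a fixed positive multiple of $C_\Ev(1\vee\rSo)\oeDi$, uniformly in $\TrSo\in\cwrSo$. This is precisely what the constants $8L_\Ev C_\Ev(1+1/d)$ and $5L_\Ev$ in \eqref{ad:de:omp}, the factor $\tfrac{3C_\Ev}{2}$ in the prior \eqref{ad:de:pr:di}, and the three parts of Assumption~\ref{ad:as:ev} are calibrated for; the ellipsoid $\cwrSo$ itself enters only through the crude bound $\gb_\Di\leq\rSo\wSo_{\Di}$.
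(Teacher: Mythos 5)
Your proposal is correct and follows essentially the same route as the paper: the paper's proof of Lemma~\ref{ad:le:opm} is declared to be a rerun of the proof of Lemma~\ref{ad:le:pm} with $\treDi,\treRa,\meDi,\peDi$ replaced by $\oeDi,\oeRa,\moeDi,\poeDi$ and with the additional input $\gb_\Di\leq\rSo\wSo_\Di$, hence $\gb_{\oeDi}\leq(1\vee\rSo)\oeRa$, exactly as you use. Your explicit numerical check of part~\ref{ad:le:opm:i} ($r_m>7C_\Ev(1\vee\rSo)\oeDi$, then $c=1/4$ in \eqref{tr:le:te:e1}) matches the paper's constants $\tfrac{7}{32}$ and $\tfrac15$; in part~\ref{ad:le:opm:ii} the paper carries out the concentration step via an explicit event $\cA_\Di$ and lands on $\tfrac{4}{9}$ before rounding to $\tfrac15$, whereas you only sketch the bookkeeping, but the argument is the same.
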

\noindent By employing  Lemma \ref{ad:le:opm} we show next for
  each $\TrSo\in\cwrSo$ that
  the minimax rate $\oeRa$ provides up to a constant an upper bound
 for the posterior concentration rate associated with the fully
 data-driven hierarchical prior distribution $P_{\RvDiRvSo}$.
\begin{theo}[Minimax optimal posterior concentration rate]\label{ad:th:ra:mm}
Let Assumption \ref{tr:as:pv}, \ref{tr:as:mi} and \ref{ad:as:ev} hold
true. If in addition $(\log \DiMa)/\oeDi\to0$ as
$\ObSoNoL\to0$, then
\begin{enumerate}[label=\emph{\textbf{(\roman*)}},ref=\emph{\textbf{(\roman*)}}]
\item\label{ad:th:ra:mm:i} for all $\TrSo\in\cwrSo$ we have
\[\lim_{\ObSoNoL\to0}\Ex_{\TrSo}P_{\RvDiRvSo|\ObSo}(\HnormV{\RvDiRvSo-\TrSo}^2\leq K^{\OpSy} \oeRa)=1\]
where $K^{\OpSy}:={16} ((1+1/d)\vee \rSo/d^{2})L^2_\Ev(8C_\Ev(1+1/d)\vee
D^{\OpSy}\mEvs[D^{\OpSy}]){(1\vee\rSo)}$ with
$D^{\OpSy}:=D^{\OpSy}(\wSo,\Ev):=\ceil{5 L_\Ev/\kappa^{\OpSy}}$;
\item\label{ad:th:ra:mm:ii} for any monotonically increasing and
  unbounded sequence $(K_\ObSoNoL)_{\ObSoNoL}$  holds
\[\lim_{\ObSoNoL\to0}\inf_{\TrSo\in\cwrSo}\Ex_{\TrSo}P_{\RvDiRvSo|\ObSo}(\HnormV{\RvDiRvSo-\TrSo}^2\leq
K_\ObSoNoL \oeRa)=1.\]
\end{enumerate}
\end{theo}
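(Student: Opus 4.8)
The plan is to combine the mixture representation $P_{\RvDiRvSo|\ObSo}=\sum_{\Di=1}^{\DiMa}p_{\RvDi|\ObSo}(\Di)\,P_{\DiRvSo[\Di]|\ObSo}$ with Lemma~\ref{ad:le:opm}, which says that the marginal posterior of $\RvDi$ asymptotically charges only the block $\{\moeDi,\dotsc,\poeDi\}$ from \eqref{ad:de:omp}. Fix $\TrSo\in\cwrSo$; bounding $P_{\DiRvSo[\Di]|\ObSo}(\cdot)\le1$ on the two outer ranges and $p_{\RvDi|\ObSo}(\Di)\le1$ inside the block gives
\begin{align*}
\Ex_{\TrSo}P_{\RvDiRvSo|\ObSo}\big(\HnormV{\RvDiRvSo-\TrSo}^2> K^{\OpSy}\oeRa\big)
&\le \Ex_{\TrSo}P_{\RvDi|\ObSo}(1\le\RvDi<\moeDi)+\Ex_{\TrSo}P_{\RvDi|\ObSo}(\poeDi<\RvDi\le\DiMa)\\
&\quad +\sum_{\moeDi\le\Di\le\poeDi}\Ex_{\TrSo}P_{\DiRvSo[\Di]|\ObSo}\big(\HnormV{\DiRvSo[\Di]-\TrSo}^2> K^{\OpSy}\oeRa\big).
\end{align*}
By Lemma~\ref{ad:le:opm} the first two summands are at most $4\exp\big(-\tfrac{C_\Ev(1\vee\rSo)}{5}\oeDi+\log\DiMa\big)$, which vanishes since $\oeDi\to\infty$ and $(\log\DiMa)/\oeDi\to0$; everything therefore reduces to the block sum.

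For the block sum the deterministic key step is to check that, for every $\Di$ with $\moeDi\le\Di\le\poeDi$,
\[\gb_\Di+3\oPoVa[\Di]+\tfrac32\,\Di\,\mPoVa[\Di]+4\gr_\Di\ \le\ K^{\OpSy}\oeRa .\]
Indeed, $\Di\mapsto\gb_\Di$ is non-increasing, so $\gb_\Di\le\gb_{\moeDi}\le 8L_\Ev C_\Ev(1+1/d)(1\vee\rSo)\oeRa$ by the definition of $\moeDi$; under Assumption~\ref{tr:as:pv} the elementary bounds give $\oPoVa[\Di]\vee\Di\mPoVa[\Di]\le\ObSoNoL\,\Di\,\mEvs[\Di]$ and $\gr_\Di\le d^{-2}\HnormV{\TrSo-\RvSoEr}^2\ObSoNoL\mEvs[\Di]\le d^{-2}\rSo\,\ObSoNoL\mEvs[\Di]$; the definition of $\poeDi$ together with Assumption~\ref{tr:as:mi} (which yields $\oeRa\le(\kappa^{\OpSy})^{-1}\ObSoNoL\,\oeDi\,\oEvs[\oeDi]$) forces $\poeDi\le D^{\OpSy}(1\vee\rSo)\,\oeDi$; and Assumption~\ref{ad:as:ev}\,\eqref{ad:as:ev:b}--\eqref{ad:as:ev:c} turns $\mEvs[\poeDi]$ into $\mEvs[D^{\OpSy}]\mEvs[\oeDi]$ and bounds $\ObSoNoL\,\oeDi\,\mEvs[\oeDi]$ by $L_\Ev\oeRa$; collecting these bounds is exactly what produces the constant $K^{\OpSy}$ of the statement. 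Granting the display, Proposition~\ref{tr:pr:pbm}, inequality~\eqref{tr:pr:pbm:e1}, yields $\Ex_{\TrSo}P_{\DiRvSo[\Di]|\ObSo}(\HnormV{\DiRvSo[\Di]-\TrSo}^2>K^{\OpSy}\oeRa)\le 2\exp(-\Di/36)$ for each $\Di$ in the block, so the block sum is at most $\sum_{\Di\ge\moeDi}2\exp(-\Di/36)\le 74\exp(-\moeDi/36)$. Whenever $\moeDi\to\infty$ — equivalently, whenever $\gb_m>0$ for every $m\in\Nz$ — this tends to $0$ and part~\ref{ad:th:ra:mm:i} follows, by essentially the argument behind Lemma~\ref{ad:le:pv}.

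The hard part will be the complementary regime, the one excluded by Assumption~\ref{ad:as:tr}: if $\TrSo$ differs from $\RvSoEr$ in only finitely many coordinates then $\gb_m=0$ from some index on, so $\moeDi$ stays bounded, $74\exp(-\moeDi/36)$ need not vanish and the crude estimate fails. In this regime, however, the relevant contraction scale is parametric and $K^{\OpSy}\oeRa$ overshoots it drastically: for every $\Di\le\poeDi$ the left-hand side of the displayed inequality is in fact $\le c\,\oeRa$ for a fixed $c$, so $K^{\OpSy}\oeRa$ leaves slack of order $\oeRa$, large compared with the single-coordinate scale $\mPoVa[\Di]\le\ObSoNoL\mEvs[\poeDi]$ — indeed $\oeRa/(\ObSoNoL\mEvs[\poeDi])\ge L_\Ev^{-1}\mEvs[D^{\OpSy}]^{-1}\oeDi\to\infty$. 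Re-running the tail bound underlying Proposition~\ref{tr:pr:pbm}, i.e.\ Lemma~\ref{tr:le:te}, inequality~\eqref{tr:le:te:e2}, with a free constant of this order upgrades the per-term estimate to one of order $\exp(-c'\oeRa/(\ObSoNoL\mEvs[\poeDi]))\le\exp(-c''\oeDi/(L_\Ev\mEvs[D^{\OpSy}]))$, uniformly over the at most $\poeDi\le D^{\OpSy}(1\vee\rSo)\oeDi$ terms of the block; the block sum is then $\lesssim\oeDi\exp(-c''\oeDi)\to0$. Together with the two boundary terms this closes part~\ref{ad:th:ra:mm:i}.

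Part~\ref{ad:th:ra:mm:ii} then requires no extra work. All the bounds above involve $\TrSo$ only through $\HnormV{\TrSo-\RvSoEr}^2\le\rSo$ and through $\gb_{\moeDi}\le 8L_\Ev C_\Ev(1+1/d)(1\vee\rSo)\oeRa$, both controlled uniformly over $\cwrSo$, so part~\ref{ad:th:ra:mm:i} in fact holds with $\Ex_{\TrSo}$ replaced by $\inf_{\TrSo\in\cwrSo}\Ex_{\TrSo}$; and since $(K_\ObSoNoL)_\ObSoNoL$ is non-decreasing and unbounded we have $K_\ObSoNoL\ge K^{\OpSy}$ for all small enough $\ObSoNoL$, whence $\{\HnormV{\RvDiRvSo-\TrSo}^2\le K^{\OpSy}\oeRa\}\subseteq\{\HnormV{\RvDiRvSo-\TrSo}^2\le K_\ObSoNoL\oeRa\}$ and the claim is immediate.
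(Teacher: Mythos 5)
Your part~\ref{ad:th:ra:mm:i} is essentially the paper's own argument: the same three-way decomposition of the mixture $P_{\RvDiRvSo|\ObSo}=\sum_{\Di}p_{\RvDi|\ObSo}(\Di)P_{\DiRvSo[\Di]|\ObSo}$, Lemma~\ref{ad:le:opm} for the two boundary terms, the deterministic bound $[\gb_{\Di}\vee\ObSoNoL\Di\oEvs]\leq c\,\oeRa$ on the block $\{\moeDi,\dotsc,\poeDi\}$ combined with the $\chi^2$ tail bound to obtain $74\exp(-\moeDi/36)$, and --- crucially --- the separate treatment of parameters for which $\moeDi$ stays bounded, where the slack of order $\oeDi$ in the free constant of Lemma~\ref{tr:le:te} upgrades the per-term estimate to $\exp(-c\,\oeDi)$. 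This is exactly what the paper does via its modified inequality \eqref{ad:th:ra:mm:pr:e2} with $c_2$ proportional to $\oeDi/\Di$.

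Part~\ref{ad:th:ra:mm:ii}, however, contains a genuine gap. You assert that part~\ref{ad:th:ra:mm:i} ``in fact holds with $\Ex_{\TrSo}$ replaced by $\inf_{\TrSo\in\cwrSo}\Ex_{\TrSo}$'' because all bounds involve $\TrSo$ only through uniformly controlled quantities. That is false: your bound in the generic regime is $74\exp(-\moeDi/36)$ plus boundary terms, and $\moeDi=\moeDi(\TrSo)$ admits no lower bound that is uniform over $\cwrSo$ --- any $\TrSo$ with $\gb_1\leq 8L_\Ev C_\Ev(1+1/d)(1\vee\rSo)\oeRa$ has $\moeDi=1$, so the supremum over $\TrSo\in\cwrSo$ of your bound does not tend to zero. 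Your rescue for such parameters (the ``degenerate'' case) relies on $\gb_{\Di}$ vanishing identically on the block from some noise level $\ObSoNoL_{\TrSy}$ onwards; both this dichotomy and the threshold $\ObSoNoL_{\TrSy}$ depend on $\TrSo$ and are not uniform over the class. (Had the uniform version of part~\ref{ad:th:ra:mm:i} with the fixed constant $K^{\OpSy}$ been available, part~\ref{ad:th:ra:mm:ii} would indeed be vacuous --- which is precisely why the theorem states it only for an unbounded sequence $K_{\ObSoNoL}$.) The paper closes part~\ref{ad:th:ra:mm:ii} by a different route: it applies the free-constant tail bound \eqref{ad:th:ra:mm:pr:e2} on the whole block with $c_2:=K_{\ObSoNoL}/K^{\OpSy}\to\infty$, which gives a block bound $4\exp(-K_{\ObSoNoL}/[12K^{\OpSy}])$ that depends neither on $\TrSo$ nor on $\moeDi$ and vanishes solely because $K_{\ObSoNoL}\to\infty$; together with the $\TrSo$-uniform boundary bounds of Lemma~\ref{ad:le:opm} this yields the uniform statement. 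You already hold the right tool from your degenerate-case argument --- it must be deployed with $c_2=K_{\ObSoNoL}/K^{\OpSy}$ for every $\TrSo\in\cwrSo$, rather than by appeal to a uniformity of part~\ref{ad:th:ra:mm:i} that does not hold.
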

\noindent We shall emphasise that due to Theorem \ref{ad:th:ora} for all $\TrSo\in\cwrSo$ satisfying
Assumption \ref{ad:as:tr} the posterior concentration rate associated
with the hierarchical prior attains the oracle rate $\treRa$ which
might be far smaller than the minimax-rate $\oeRa$. Consequently, the
minimax rate cannot provide an uniform  lower bound  over $\cwrSo$ for the posterior concentration rate associated with the
hierarchical prior. However, due to Theorem  \ref{ad:th:ra:mm} the
posterior concentration rate is for all $\TrSo\in\cwrSo$, independently 
that Assumption \ref{ad:as:tr} holds, at least of
the order of the minimax rate $\oeRa$. The next assertion establishes the minimax-rate optimality of the
fully data-driven Bayes estimator.
\begin{theo}[Minimax optimal Bayes estimate]\label{ad:th:be:mm}
Under Assumption \ref{tr:as:pv}, \ref{tr:as:mi} and \ref{ad:as:ev}  consider the Bayes estimator
$\hSo:=\Ex[\RvDiRvSo|\ObSo]$. If in addition $\log(
\DiMa/\oeRa)/\oeDi\to0$  as $\ObSoNoL\to0$, then there exists $K^\star:=K^\star(\cwrSo,\Ev,d)<\infty$ such that $\sup_{\TrSo\in\cwrSo}\Ex_{\TrSo} \HnormV{\hSo-\TrSo}^2\leq K^\star
\oeRa$  for all $\ObSoNoL\in(0,\ObSoNoL_\star)$.
\end{theo}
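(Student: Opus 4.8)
The plan is to exploit that the fully data-driven Bayes estimator is the $P_{\RvDi|\ObSo}$-mixture $\hSo=\sum_{\Di=1}^{\DiMa}p_{\RvDi|\ObSo}(\Di)\,\hSo^{\Di}$ of the projection-type posterior means $\hSo^{\Di}=\Ex[\DiRvSo[\Di]|\ObSo]$, and to transport the uniform bound of Theorem \ref{tr:th:obe} through this mixture, controlling the contribution of the dimensions $\Di$ far from $\oeDi$ by the concentration of $P_{\RvDi|\ObSo}$ around $\oeDi$ quantified in Lemma \ref{ad:le:opm}; the argument runs in parallel with the proof of Theorem \ref{ad:th:bo}. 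Since the coordinates with index $j>\DiMa$ are common to every $\hSo^{\Di}$, one has $\hSo_j-\TrSo[j]=\sum_{\Di}p_{\RvDi|\ObSo}(\Di)(\hSo^{\Di}_j-\TrSo[j])$ for all $j$, so convexity of $t\mapsto t^2$ gives, pointwise in $\ObSo$,
\[
\HnormV{\hSo-\TrSo}^2\le\sum_{\Di=1}^{\DiMa}p_{\RvDi|\ObSo}(\Di)\,\HnormV{\hSo^{\Di}-\TrSo}^2,\qquad
\HnormV{\hSo^{\Di}-\TrSo}^2=\sum_{j=1}^{\Di}(\So^{\ObSo}_j-\TrSo[j])^2+\gb_{\Di}.
\]
I would then split the sum over $\Di$ into the three ranges $1\le\Di<\moeDi$, $\moeDi\le\Di\le\poeDi$, $\poeDi<\Di\le\DiMa$, with $\moeDi,\poeDi$ as in \eqref{ad:de:omp}, take $\Ex_{\TrSo}$, and bound each piece by a constant multiple of $\oeRa$. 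As $\hSo$, $\moeDi$, $\poeDi$ depend on $\TrSo$ only through the $\gb_\Di(\TrSo)$ and $\HnormV{\TrSo-\RvSoEr}^2\le\rSo$, every constant produced is uniform over $\TrSo\in\cwrSo$.

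For the central range, the point is that $\gb_{\Di}$ is non-increasing, so $\gb_{\Di}\le\gb_{\moeDi}\le 8L_\Ev C_\Ev(1+1/d)(1\vee\rSo)\oeRa$ by the definition of $\moeDi$, while $\sum_{j\le\Di}(\So^{\ObSo}_j-\TrSo[j])^2\le\sum_{j\le\poeDi}(\So^{\ObSo}_j-\TrSo[j])^2$ no longer depends on $\Di$; using $\sum_\Di p_{\RvDi|\ObSo}(\Di)\le1$ and taking expectations, the central contribution is at most $\oPoVa[\poeDi]+\gr_{\poeDi}+8L_\Ev C_\Ev(1+1/d)(1\vee\rSo)\oeRa$. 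I would bound $\oPoVa[\poeDi]\le\ObSoNoL\,\poeDi\,\mEvs[\poeDi]$ and, by Assumption \ref{tr:as:pv}, $\gr_{\poeDi}\le d^{-2}\rSo\,\ObSoNoL\,\mEvs[\poeDi]$; next, Assumption \ref{tr:as:mi} gives $\ObSoNoL\,\oeDi\,\mEvs[\oeDi]\ge\ObSoNoL\,\oeDi\,\oEvs[\oeDi]\ge\kappa^{\OpSy}\oeRa$, whence $\poeDi\le D^{\OpSy}(1\vee\rSo)\oeDi$ from the defining inequality of $\poeDi$; finally Assumption \ref{ad:as:ev}\eqref{ad:as:ev:b} and \eqref{ad:as:ev:c} turn $\mEvs[\poeDi]$ into $\mEvs[\ceil{D^{\OpSy}(1\vee\rSo)}]\,\mEvs[\oeDi]$ and $\ObSoNoL\,\oeDi\,\mEvs[\oeDi]$ into $L_\Ev\oeRa$. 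Collecting these shows the central contribution is $\le C\,\oeRa$ with $C=C(\rSo,\wSo,\Ev,d)$.

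For the two outer ranges I would use the crude deterministic bound $\HnormV{\hSo^{\Di}-\TrSo}^2\le R'+\gb_1$ valid for every $\Di\le\DiMa$, where $R':=\sum_{j=1}^{\DiMa}(\So^{\ObSo}_j-\TrSo[j])^2$ and $\gb_1\le\rSo$ (here the nonrandom bias is merely bounded, not small). Hence the outer contribution is at most $(R'+\rSo)\bigl[P_{\RvDi|\ObSo}(1\le\RvDi<\moeDi)+P_{\RvDi|\ObSo}(\poeDi<\RvDi\le\DiMa)\bigr]$; applying Cauchy--Schwarz to the $R'$-part and $P_{\RvDi|\ObSo}^{2}\le P_{\RvDi|\ObSo}$ to the probabilities, its expectation is at most a constant times $\bigl(\sqrt{\Ex_{\TrSo}(R')^2}+\rSo\bigr)\sqrt{\Ex_{\TrSo}[P_{\RvDi|\ObSo}(\RvDi<\moeDi)+P_{\RvDi|\ObSo}(\RvDi>\poeDi)]}$. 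A routine second-moment computation for a sum of independent squared Gaussians, combined with $\oPoVa[\DiMa]\le\ObSoNoL^{-1}\Evs_1$, $\gr_{\DiMa}\le d^{-2}\rSo\,\Evs_1$ and $\max_{j\le\DiMa}\sigma_j\le\ObSoNoL\,\mEvs[\DiMa]\le\Evs_1$ (all immediate from the definition of $\DiMa$), gives $\Ex_{\TrSo}(R')^2\le C_1\ObSoNoL^{-2}$, while Lemma \ref{ad:le:opm} yields $\Ex_{\TrSo}[P_{\RvDi|\ObSo}(\RvDi<\moeDi)+P_{\RvDi|\ObSo}(\RvDi>\poeDi)]\le 4\exp(-\tfrac{C_\Ev(1\vee\rSo)}{5}\oeDi+\log\DiMa)$.

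Absorbing the outer terms into $\oeRa$ is the crux. Assumption \ref{ad:as:ev}\eqref{ad:as:ev:b} forces $\mEvs$ to grow at most polynomially, so from $\DiMa=\max\{\Di\le\gauss{\ObSoNoL^{-1}}:\ObSoNoL\,\mEvs[\Di]\le\Evs_1\}$ one gets $\DiMa\ge c_\Ev\ObSoNoL^{-\delta_\Ev}$ for constants $c_\Ev,\delta_\Ev>0$ depending only on $\Ev$; hence the hypothesis $\log(\DiMa/\oeRa)/\oeDi\to0$ in fact forces $(\log\ObSoNoL^{-1})/\oeDi\to0$ and $(\log\oeRa^{-1})/\oeDi\to0$. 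Consequently, for every fixed $c>0$ and $N\in\Nz$ one has $\ObSoNoL^{-N}\exp(-c\,\oeDi)=o(\oeRa)$ as $\ObSoNoL\to0$, so both $\sqrt{\Ex_{\TrSo}(R')^2}$ and $\rSo$ multiplied by the exponentially small posterior mass are $\le\oeRa$ for $\ObSoNoL$ small, uniformly in $\TrSo\in\cwrSo$. Adding the three pieces yields $\sup_{\TrSo\in\cwrSo}\Ex_{\TrSo}\HnormV{\hSo-\TrSo}^2\le K^\star\oeRa$ with $K^\star=K^\star(\cwrSo,\Ev,d)$. The main obstacle is exactly this last step: the estimators $\hSo^{\Di}$ with $\Di$ far from $\oeDi$ carry a variance $\oPoVa[\Di]$ that grows polynomially in $\ObSoNoL^{-1}$ as $\Ev$ decays, so one genuinely needs the exponential-in-$\oeDi$ decay of $P_{\RvDi|\ObSo}$ off a neighbourhood of $\oeDi$ (Lemma \ref{ad:le:opm}) to dominate those factors, and this forces a comparison of the growth of $\oeDi$ with $\log\ObSoNoL^{-1}$ — precisely what the hypothesis $\log(\DiMa/\oeRa)/\oeDi\to0$ supplies once Assumption \ref{ad:as:ev} is invoked to identify $\log\DiMa$ with $\log\ObSoNoL^{-1}$ up to a constant factor.
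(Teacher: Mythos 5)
Your proposal is correct, but it takes a genuinely different route from the paper. The paper argues coordinate-wise: it writes $\hSo_j-\TrSo[j]=(\So^{\ObSo}_j-\TrSo[j])P_{\RvDi|\ObSo}(j\leq\RvDi\leq\DiMa)+(\RvSoEr_j-\TrSo[j])P_{\RvDi|\ObSo}(1\leq\RvDi<j)$, splits into a stochastic and a bias sum, and controls these via Lemma \ref{ad:le:be:mm:rem} (which rests on Jensen's inequality per coordinate, the positive-part tail bound of Lemma \ref{ad:le:te} applied to $S_{\DiMa}$, and Lemma \ref{ad:le:opm}), exactly mirroring the proof of Theorem \ref{ad:th:bo}. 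You instead apply Jensen at the level of the mixture $\hSo=\sum_{\Di}p_{\RvDi|\ObSo}(\Di)\hSo^{\Di}$ and split over dimension ranges $\{1,\dots,\moeDi-1\}$, $\{\moeDi,\dots,\poeDi\}$, $\{\poeDi+1,\dots,\DiMa\}$; your treatment of the central range reproduces the paper's bounds $\gb_{\moeDi}\leq 8L_\Ev C_\Ev(1+1/d)(1\vee\rSo)\oeRa$ and $\ObSoNoL\,\poeDi\,\mEvs[\poeDi]\lesssim\oeRa$, while the outer ranges are handled by Cauchy--Schwarz together with a second-moment bound on $R'$ and the exponential posterior mass bound of Lemma \ref{ad:le:opm}. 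Both are valid. The paper's route avoids any second-moment computation and produces remainders carrying factors of $\DiMa$ directly, so the hypothesis $\log(\DiMa/\oeRa)/\oeDi\to0$ applies verbatim; your route is conceptually cleaner (risk of each model weighted by its posterior probability) and bypasses Lemmas \ref{ad:le:te} and \ref{ad:le:be:mm:rem} entirely, at the cost of the Cauchy--Schwarz loss (a factor $1/2$ in the exponent) and the extra detour through the submultiplicativity of $\mEvs$ to convert $\log\ObSoNoL^{-1}$ into $O(\log\DiMa)$. That detour, while correct, is avoidable: since $\sigma_j\leq\ObSoNoL\mEvs[\DiMa]\leq\Evs_1$ for $j\leq\DiMa$, one has $\Ex_{\TrSo}(R')^2\leq C(\Evs_1\DiMa)^2$ directly, so $\sqrt{\Ex_{\TrSo}(R')^2}\lesssim\DiMa$ and the hypothesis $\log(\DiMa/\oeRa)/\oeDi\to0$ absorbs the outer terms without invoking Assumption \ref{ad:as:ev}\eqref{ad:as:ev:b} to lower-bound $\DiMa$.
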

\noindent Let us briefly comment on the last assertion by considering
again the improper
specification of the prior family
$\{P_{\RvSo^{\Di}}\}_{\Di}$ introduced in Section \ref{s:mo}. Recall
that in this situation for each
$\Di\in\Nz$ the Bayes estimator $\hSo^\Di=\Ex[\RvSo^\Di|\ObSo]$ of
$\RvSo^\Di$ equals an orthogonal projection estimator, i.e.,
$\hSo^\Di=(Y/\Ev)^m$. Moreover, the
posterior probability of the thresholding parameter
$\RvDi$ taking a value $m\in\{1,\dotsc,\DiMa\}$ is  proportional to $\exp(-\frac{1}{2}\{-\normV{(Y/\Ev)^m}_{\ObSoNoL\Evs}^2+3
  C_\Ev m\})$, and hence the data-driven Bayes estimator
  $\hSo=\suite{\hSo}=\Ex[\RvDiRvSo|\ObSo]$  equals the shrinked
  orthogonal projection estimator given by
 \begin{equation*}
\hSo_j
=\frac{\sum_{\Di=j}^{\DiMa}\exp(-\frac{1}{2}\{-\normV{(Y/\Ev)^\Di}_{\ObSoNoL\Evs}^2+3
  C_\Ev m\})}{\sum_{\Di=1}^{\DiMa}\exp(-\frac{1}{2}\{-\normV{(Y/\Ev)^\Di}_{\ObSoNoL\Evs}^2+3
  C_\Ev \Di\})}\times \frac{Y_j}{\Ev_j} \Ind{\set{1\leq j\leq
      \DiMa}}.
\end{equation*}
From Theorem \ref{ad:th:be:mm} it follows now, that the fully data-driven shrinkage estimator $\hSo$ is
minimax-optimal up to a constant for a wide variety of parameter
spaces $\cwrSo$ provided Assumptions \ref{tr:as:mi} and \ref{ad:as:ev}
hold true. Interestingly, identifying
$\Upsilon(\hSo^\Di):=-(1/2)\normV{(Y/\Ev)^\Di}_{\ObSoNoL\Evs}^2$ as a
contrast and
$\pen_\Di:=3/2 C_\Ev \Di$ as a penalty term the $j$-th shrinkage
weight is proportional to $\sum_{\Di=j}^{\DiMa}\exp(-\{\Upsilon(\hSo^\Di)+\pen_m\})$. Roughly
speaking, in comparison to a classical model selection approach where
a data-driven estimator $\hSo^{\whm}=(Y/\Ev)^{\whm}$  is obtained by
selecting the dimension parameter $\whm$ as minimum of a
penalised contrast criterion over a
class of admissible models $\{1,\dotsc,\DiMa\}$, i.e.,
$\whm=\argmin\nolimits_{1\leq m\leq \DiMa}\{\Upsilon(\hSo^\Di)+\pen_m\}$, following the
Bayesian approach each of the
$\DiMa$ components of the data-driven Bayes estimator  is shrunk 
proportional to the associated values of the  penalised contrast criterion.


\paragraph{Conclusions and perspectives.}In this paper we have presented a
hierarchical prior leading to a fully-data driven Bayes estimator that is minimax-optimal in an indirect sequence space
model. Obviously, the concentration rate based on a hierarchical prior  in an indirect sequence space model
with additional noise in the eigenvalues  is
 only one amongst the many interesting questions for further research
 and we are currently exploring this topic. Moreover, inspired by the
 specific form of the fully-data driven Bayes estimator, as
 discussed in the last section, we are currently studying the effect
 of different choices for the contrast and the penalty term on the
 properties of the estimator.
\paragraph{Acknowledgements.}
The work of Jan Johannes and Rudolf Schenk was supported by the IAP research network no.\ P7/06 of the
Belgian Government (Belgian Science Policy), by the ``Fonds Sp\'eciaux
de Recherche'' from the Universit\'e catholique de Louvain and by the
ARC contract 12/17-045 of the "Communauté française de Belgique",
granted by the Académie universitaire Louvain. Anna Simoni thanks financial support from ANR-13-BSH1-0004 (IPANEMA) and from the Collaborative Research Center 884.

\setcounter{subsection}{0}
\appendix
\section{Appendix:  Proofs of Section \ref{s:tr}}\label{app:proofs:s:tr}
\begin{proof}[\noindent{\color{darkred}\sc Proof of Lemma \ref{tr:le:te}.}]\label{tr:le:te:pr} Let $X_j= \beta_jZ_j+\alpha_j$ with
independent and standard normally
distributed random variables $\{Z_j\}_{j=1}^m$.
 We start our proof with the observation that  $\Ex(S_{m})=\sum_{j=1}^{m}\set{\beta_j^2+\alpha_j^2}$ and define
  $\Sigma_m:=\frac{1}{2}\sum_{j=1}^{m}\Var\big((\beta_jZ_j+\alpha_j)^2\big)=\sum_{j=1}^{m}\beta_j^2(\beta_j^2+2\alpha_j^2)$. Let $t_m:=\max_{1\leq j\leq m}\beta_j^2$ and  by  using that $
  v_m\geq \sum_{j=1}^m \beta_j^2$
  and $
  r_m\geq \sum_{j=1}^m \alpha_j^2$ we have $\Ex(S_{m})\leq v_m+r_m $
  and $\Sigma_m\leq t_m\;(v_m+2r_m)$. These bounds are used below without
  further reference. There exist several results of tail bound for sums of independent
squared Gaussian random variables and we present next a version which
is due to \cite{Birge2001} and can be shown following the lines of the proof of Lemma 1 in \cite{LaurentLM2012}. For all $x>0$ we have
\begin{multline}\label{tr:le:te:pr:e1}
  P(S_m -\Ex S_m\geq2\sqrt{\Sigma_mx}+2 t_mx)\leq
  \exp(-x)\qquad\mbox{and}\\\, P(S_m -\Ex S_m\leq-2\sqrt{\Sigma_mx})\leq
  \exp(-x).
\end{multline}
Consider \eqref{tr:le:te:e2}. Keeping in mind that for all $c\geq0$,
$(3/2)c(v_m + 2r_m)\geq c(v_m + 2r_m) + 2t_m c(c\wedge1)(v_m +
2r_m)/(4t_m)$ 
 and $(c\vee1)t_m(v_m+2r_m)\geq \Sigma_m$ we conclude
for $x:={c(c\wedge1)(v_m + 2r_m)}/{(4t_m)}$  that  $(3/2) c(v_m +
2r_m)\geq 2\sqrt{ \Sigma_m x} + 2 t_m x$ 
  and hence by employing the first exponential bound in
  \eqref{tr:le:te:pr:e1} we obtain \eqref{tr:le:te:e2}. 
  On the other hand side, since  $c(v_m +
  2r_m)\geq 2\sqrt{\Sigma_mx}$ for all $c\geq0$ 
assertion  \eqref{tr:le:te:e1} follows by employing the second exponential bound in
  \eqref{tr:le:te:pr:e1}, which completes the proof. 
\end{proof}
\begin{proof}[\noindent{\color{darkred}\sc Proof of Proposition \ref{tr:pr:pbm}.}]\label{tr:pr:pbm:pr}
 We intend to  apply the technical Lemma \ref{tr:le:te}. Consider first the assertion \eqref{tr:pr:pbm:e1}. Let  $s_\Di$
 and $c_1$ be positive constants (to be specified below). {
 Keeping in mind that the posterior distribution of
 $\DiRvSo_j$ given $\ObSo_j$ is degenerated on $\RvSoEr_j$ for $j>\Di$ and} that $ \gb_\Di=
 \sum_{j>\Di}(\TrSo[j]-\RvSoEr_j)^2$ we have
\begin{multline*}
\Ex_{\TrSo}P_{\DiRvSo|\ObSo}\bigg(\HnormV{\DiRvSo-\TrSo}^2> \gb_\Di+ \oPoVa+\frac{3c_1}{2}\Di\mPoVa+(3c_1+1) s_\Di\bigg)\\
 =\; \Ex_{\TrSo}P_{\DiRvSo|\ObSo}\bigg(
 \sum_{j=1}^{\Di} (\DiRvSo_j-\TrSo[j])^2 >\oPoVa+ \frac{3c_1}{2}\Di\mPoVa + (3c_1+1) s_\Di \bigg).
\end{multline*}
  Define
 $S_\Di^{\DiRvSo}:=\sum_{j=1}^{\Di}(\DiRvSo_j-\TrSo[j])^2$ {
 where conditional on $\ObSo$ the random variables
 $\{\DiRvSo_j-\TrSo[j]\}_{j=1}^\Di$ are independent and normally distributed with conditional mean
 $\So^{\ObSo}_j-\TrSo[j]$ and conditional variance $\sigma_j$.}  Observe that  $
 \oPoVa= \sum_{j=1}^{\Di}\sigma_j$ and
 $\Ex_{\DiRvSo|\ObSo}[S_\Di^{\DiRvSo}]=\oPoVa+\sum_{j=1}^{\Di}(\So^{\ObSo}_j-\TrSo[j])^2$. Introduce   the event
$
\Omega_{\Di}:=\set{{
 \sum_{j=1}^{\Di}(\So^{\ObSo}_j-\TrSo[j])^2
      \leq s_\Di}}
$
where obviously $\Ind{\Omega_{\Di}}\Ex_{\DiRvSo|\ObSo}[S_\Di^{\DiRvSo} ]\leq
    \oPoVa+s_\Di$ and hence,
 \begin{multline*}
   \Ex_{\TrSo}\Ind{\Omega_{\Di}}P_{\DiRvSo|\ObSo}\bigg(S_m^{\DiRvSo} > \oPoVa+\frac{3c_1}{2}\Di\mPoVa + (3c_1+1)s_\Di \bigg)\\\leq
   \Ex_{\TrSo}\Ind{\Omega_{\Di}}P_{\DiRvSo|\ObSo}\bigg(S_m^{\DiRvSo}-\Ex_{\DiRvSo|\ObSo}[S_\Di^{\DiRvSo}] > \frac{3c_1}{2}(\Di\mPoVa + 2s_\Di) \bigg).
 \end{multline*}
Employing \eqref{tr:le:te:e2} in Lemma \ref{tr:le:te} we bound the left hand side in the last display
and we obtain
 \begin{displaymath}
   \Ex_{\TrSo}\Ind{\Omega_{\Di}}P_{\DiRvSo|\ObSo}\bigg(S_\Di^{\DiRvSo} > \oPoVa+\frac{3c_1}{2}\Di\mPoVa + (3c_1+1)s_\Di
   \bigg)
\leq
\exp(-\frac{c_1(c_1\wedge1)(\Di\mPoVa + 2s_\Di)}{4\mPoVa})
 \end{displaymath}
where we used that  $
\Di\mPoVa\geq \sum_{j=1}^{\Di}\sigma_j$ for $\mPoVa = \max_{1\leq j\leq
    \Di}\sigma_j$. As a consequence,
\begin{multline}\label{tr:pr:pbm:pr:e1}
\Ex_{\TrSo}P_{\DiRvSo|\ObSo}(\HnormV{\DiRvSo-\TrSo}^2> \gb_\Di+ \oPoVa+\frac{3c_1}{2}\Di\mPoVa+(3c_1+1) s_\Di)\\
\leq
\exp(-\frac{c_1(c_1\wedge1)(\Di\mPoVa + 2s_\Di)}{4\mPoVa}) + P_{\TrSo}({\Omega_{\Di}^c}).
\end{multline}
 In the following, we bound the remainder probability of the event
 $\Omega_{\Di}^c=\set{S_\Di^{\ObSo} > s_\Di}$ for $S_\Di^{\ObSo}:=\sum_{j=1}^{\Di}(\So^{\ObSo}_j-\TrSo[j])^2$
where the random variables $\{\So^{\ObSo}_j-\TrSo[j]\}_{j=1}^\Di$ are independent and
normally distributed with mean $\Ex_{\TrSo}[\So^{\ObSo}_j]-\TrSo[j]$ and standard deviation
$\beta_j:={\epsilon^{1/2}}{\Ev_j^{-1}}\mu_j$ for $\mu_j:=({\ObSoNoL\Ev_j^{-2}\RvSoVa^{-1}_j+1})^{-1}$. Since
$\sigma_j= \epsilon{\Ev_j^{-2}}\mu_j$ and $\mu_j\leq 1$ if follows that  $
\oPoVa\geq \sum_{j=1}^{\Di}\beta_j^2$ and   $
\mPoVa \geq \max_{1\leq j\leq  \Di}\beta_j^2$. Moreover, $\gr_\Di=
\sum_{j=1}^{\Di}(\Ex_{\TrSo}[\So^{\ObSo}_j]-\TrSo[j])^2$  and hence
  $\Ex_{\TrSo}[S_\Di^{\ObSo}]\leq\oPoVa+\gr_\Di$. Denote $s_\Di:= \oPoVa+\frac{3c_2}{2}\Di\mPoVa + (3c_2+1) \gr_\Di$
  which allows us to write
\begin{multline*}
 P_{\TrSo}({\Omega_{\Di}^c})=  P_{\TrSo}\bigg(S_\Di^{\ObSo} > \oPoVa+\frac{3c_2}{2}\Di\mPoVa + (3c_2+1)\gr_\Di \bigg)\\\leq
P_{\TrSo}\bigg(S_\Di^{\ObSo}-\Ex_{\TrSo}[S_\Di^{\ObSo}] > \frac{3c_2}{2}(\Di\mPoVa + 2\gr_\Di) \bigg)
 \end{multline*}
 The right hand side in the last display is bounded by  employing \eqref{tr:le:te:e2} in Lemma
 \ref{tr:le:te}, and hence
\begin{equation}\label{tr:pr:pbm:pr:e2}
 P_{\TrSo}({\Omega_{\Di}^c})\leq \exp(-\frac{c_2(c_2\wedge1)(\Di\mPoVa + 2\gr_\Di)}{4\mPoVa}).
 \end{equation}
By combination of \eqref{tr:pr:pbm:pr:e1}, \eqref{tr:pr:pbm:pr:e2} and $s_\Di= \oPoVa+\frac{3c_2}{2}\Di\mPoVa +
(3c_2+1) \gr_\Di$ it follows that
\begin{multline*}
 \Ex_{\TrSo}P_{\DiRvSo|\ObSo}\big( \HnormV{\DiRvSo-\TrSo}^2> \gb_\Di+ \oPoVa+\frac{3c_1}{2}\Di\mPoVa+(3c_1+1)[\oPoVa+\frac{3c_2}{2}\Di\mPoVa + (3c_2+1) \gr_\Di]\big)\\
\leq \exp(-\frac{c_1(c_1\wedge1)(3c_2+1)( \Di\mPoVa + 2 \gr_\Di)}{4\mPoVa})+ \exp(-\frac{c_2(c_2\wedge1)(\Di\mPoVa + 2\gr_\Di)}{4\mPoVa})
\end{multline*}
The assertion \eqref{tr:pr:pbm:e1} follows now by taking  $
c_1=1/3=c_2$. The proof of the assertion \eqref{tr:pr:pbm:e2} follows along the lines of the proof of \eqref{tr:pr:pbm:e1}.  Let  $c_3$ be a positive
constant (to be specified below).  Since $\Ex_{\DiRvSo|\ObSo}[S_\Di^{\DiRvSo} ]\geq
    \oPoVa$ it trivially follows from  \eqref{tr:le:te:e1} in Lemma \ref{tr:le:te}  that
\begin{multline*}
  \Ex_{\TrSo}\1_{\Omega_{\Di}}P_{\DiRvSo|\ObSo}\bigg(S_m^{\DiRvSo} <
  \oPoVa-c_3\Di\mPoVa -2c_3 s_\Di \bigg)
\\\hfill\leq
\Ex_{\TrSo}\1_{\Omega_{\Di}}P_{\DiRvSo|\ObSo}\bigg(S_\Di^{\DiRvSo} - \Ex_{\DiRvSo|\ObSo}[S_\Di^{\DiRvSo}]<
 -c_3(\Di\mPoVa+ 2s_\Di) \bigg)
\\\leq
\exp(-\frac{c_3(c_3\wedge1)(\Di\mPoVa + 2s
_\Di)}{4\mPoVa})
\end{multline*}
Combining the last bound, the estimate \eqref{tr:pr:pbm:pr:e2} and  $
\gb_\Di=\sum_{j>\Di}(\TrSo[j]-\RvSoEr_j)^2$ it
follows that
\begin{multline*}
 \Ex_{\TrSo}P_{\DiRvSo|\ObSo}\bigg( \HnormV{\DiRvSo-\TrSo}^2< \gb_\Di+\oPoVa -c_3\Di\mPoVa-2 c_3[\oPoVa+\frac{3c_2}{2}\Di\mPoVa
 + (3c_2+1) \gr_\Di]\bigg)
 \\\hfill\leq \Ex_{\TrSo}\1_{\Omega_{\Di}}P_{\DiRvSo|\ObSo}(
 S_\Di^{\DiRvSo}
 <
 \oPoVa-c_3\Di\mPoVa-2 c_3 s_\Di)+P_{\TrSo}(\Omega_{\Di}^c)\\
\leq \exp(-\frac{c_3(c_3\wedge1)(3c_2+1)( \Di\mPoVa + 2 \gr_\Di)}{4\mPoVa})+ \exp(-\frac{c_2(c_2\wedge1)(\Di\mPoVa + 2\gr_\Di)}{4\mPoVa})
\end{multline*}
The assertion \eqref{tr:pr:pbm:e2} follows now by taking $c_2=1/3$  which completes the proof.\end{proof}
\begin{proof}[\noindent{\color{darkred}\sc Proof of Proposition \ref{tr:pr:bc}.}]\label{tr:pr:bc:pr}
Keeping in mind the notations and findings used in the proof of Proposition
\ref{tr:pr:pbm} we have
\begin{multline}\label{tr:pr:bc:pr:e1}
  \Ex_{\TrSo}
  \HnormV{\hSo^{\eDi}-\TrSo}^2=\Ex_{\TrSo}\sum_{j=1}^{\eDi}(\So^{\ObSo}_j-\TrSo[j])^2
  + \sum_{j>\eDi}(\RvSoEr_j-\TrSo[j])^2
  \\=\sum_{j=1}^{\eDi}\PoVa[j](\PoVa[j]\Ev_j^{2}\ObSoNoL^{-1})+\gr_{\eDi}+\gb_{\eDi},
\end{multline}
 which together with $\PoVa[j]\Ev_j^{2}\ObSoNoL^{-1}\leq1$  implies $\Ex_{\TrSo} \HnormV{\hSo^{\eDi}-\TrSo}^2 \leq  \gb_{\eDi}+\oPoVa[\eDi]+\gr_{\eDi}$.
Exploiting the Assumption \ref{tr:as:pr}, that is, $\gr_{\eDi}\leq K [\gb_{\eDi}\vee\oPoVa[\eDi]]$, we obtain the assertion.
\end{proof}
\begin{proof}[\noindent{\color{darkred}\sc Proof of Theorem
    \ref{tr:pr:bo}.}]\label{tr:pr:bo:pr}
The assertion follows from  \eqref{tr:pr:bc:pr:e1}
  given in the proof of Proposition \ref{tr:pr:bc}. Indeed,
 \eqref{tr:pr:bo:i} follows by combination of
\eqref{tr:pr:bc:pr:e1}, $\sum_{j=1}^{\Di}\PoVa[j](\PoVa[j]\Ev_j^{2}\ObSoNoL^{-1})\leq
\ObSoNoL\Di\oEvs$ and  $\gr_{\Di}\leq
d^{-2}\HnormV{\TrSo-\RvSoEr}^2\ObSoNoL\mEvs$ while \eqref{tr:pr:bc:pr:e1},
$\sum_{j=1}^{\Di}\PoVa[j](\PoVa[j]\Ev_j^{2}\ObSoNoL^{-1})\geq (1+1/d)^{-2}
\ObSoNoL\Di\oEvs$
and  $\gr_{\Di}\geq 0$ imply together
\eqref{tr:pr:bo:ii}. Note that these elementary bounds hold due to
Assumption   \ref{tr:as:pv} for all $\ObSoNoL\in(0,\ObSoNoL_o)$ since
$\treRa=o(1)$ as $\ObSoNoL\to0$,  which completes the proof.
\end{proof}
\begin{proof}[\noindent{\color{darkred}\sc Proof of Theorem
    \ref{tr:th:opc}}]\label{tr:th:opc:pr} We start the proof with the
  observation that due to Assumption \ref{tr:as:mi}
  and  \eqref{tr:as:mi:ac} the sub-family $\{P_{\RvSo^{\oeDi}}\}_{\oeDi}$
satisfies the  condition \eqref{tr:as:pv:ac}  uniformly for all
$\TrSo\in\cwrSo$ with $L=L^{\OpSy}/\kappa^{\OpSy}$. Moreover, we have
$\oeRa=o(1)$, as $\ObSoNoL\to0$ and we suppose that Assumption
\ref{tr:as:pv} holds true. Thereby, the
assumptions of  Corollary \ref{tr:co:pb} are satisfied.
From  $((1+1/d)\vee \rSo/d^{2})(L^{\OpSy}/\kappa^{\OpSy})\geq ((1+1/d)\vee
d^{-2}\HnormV{\TrSo-\RvSoEr}^2)L=K$ and the definition of $K^{\OpSy}$ it follows further that
$K^{\OpSy}\geq (4+(11/2)K){(1\vee \rSo)}$
and $(K^{\OpSy})^{-1}\leq
(1/9)(1+1/d)^{-1}\kappa^{\OpSy}$ for all $\TrSo\in\cwrSo$. Moreover, for all $0<\ObSoNoL<\ObSoNoL_o$ we have
$(1\vee\rSo)\,\oeRa\geq
\eRa^{\oeDi}= [\gb_{\oeDi}\vee
    \ObSoNoL\,\oeDi\,\oEvs[\oeDi]]\geq \kappa^{\OpSy}\,\oeRa$.
By combining these elementary inequalities  and  Corollary \ref{tr:co:pb} with $c:=1/(9K)$
and $c\geq 1/K^{\OpSy}$ uniformly  for all $\TrSo\in\cwrSo$  we
obtain for all   $\ObSoNoL\in(0,\ObSoNoL_o)$
\begin{multline}\label{tr:th:opc:pr:e1}
\sup_{\TrSo\in\cwrSo}\Ex_{\TrSo}P_{\DiRvSo[\oeDi]|\ObSo}(
\HnormV{\DiRvSo[\oeDi]-\TrSo}^2> K^{\OpSy}\oeRa)\\
\leq \sup_{\TrSo\in\cwrSo}\Ex_{\TrSo}P_{\DiRvSo[\oeDi]|\ObSo}(
\HnormV{\DiRvSo[\oeDi]-\TrSo}^2> (4+({11}/{2})K)\eRa^{\oeDi})\\\leq
2\exp(-{\oeDi}/{36});
\end{multline}
\begin{multline}\label{tr:th:opc:pr:e2}
\sup_{\TrSo\in\cwrSo}\Ex_{\TrSo}P_{\DiRvSo[\oeDi]|\ObSo}(
\HnormV{\DiRvSo[\oeDi]-\TrSo}^2< (K^{\OpSy})^{-1}\oeRa)\\
\leq \sup_{\TrSo\in\cwrSo}\Ex_{\TrSo}P_{\DiRvSo[\oeDi]|\ObSo}( \HnormV{\DiRvSo[\oeDi]-\TrSo}^2<(1-8\,c\,K)\{(1+1/d)\}^{-1}\eRa^{\oeDi})\\\leq 2\exp(-\oeDi/[2 (K^\OpSy)^2]).
\end{multline}
By combining \eqref{tr:th:opc:pr:e1} and \eqref{tr:th:opc:pr:e2} we
obtain the assertion of the theorem since $\oeDi \to \infty $, which completes the proof.
\end{proof}


\section{Appendix: Proofs of Section \ref{s:ad}}\label{app:proofs:s:ad}
\subsection{Proof of Theorem   \ref{ad:th:ora}}
\begin{proof}[\noindent{\color{darkred}\sc Proof of Lemma
    \ref{ad:le:pm}.}]\label{ad:le:pm:pr}
Consider \ref{ad:le:pm:i}. The claim holds trivially true
in case $\meDi=1$, thus suppose $\meDi>1$ and let
$1\leq \Di<\meDi\leq \treDi$. Define
$S_{\Di}:=\normV{\hSo^{\treDi}-\RvSoEr}_{\sigma}^2-\normV{\hSo^\Di-\RvSoEr}_{\sigma}^2$. Given
an event $\cA_{\Di}$ and its complement
$\cA_{\Di}^c$ (to be specified below) it follows
\begin{multline}\label{ad:le:pm:pr:e1}
 p_{\RvDi|\ObSo}(\Di)=\frac{\exp(\frac{1}{2}\{\normV{\hSo^\Di-\RvSoEr}_{\sigma}^2-3
  C_\Ev m\})}{\sum_{k=1}^{\DiMa}\exp(\frac{1}{2}\{\normV{\hSo^k-\RvSoEr}_{\sigma}^2-3
  C_\Ev k\})}\\
=
\exp\bigg(\frac{1}{2}\big\{-S_{\Di}+3 C_\Ev [\treDi-\Di]\big\}\bigg)\Ind{\cA_{\Di}}
+ \Ind{\cA_{\Di}^c}
\end{multline}
Moreover, elementary algebra  shows
\begin{equation*}
S_{\Di}
=\sum_{j=\Di+1}^{\treDi}\frac{\Ev_j^2\sigma_j}{\ObSoNoL^2}(Y_j-\Ev_j\RvSoEr_j)^2
\end{equation*}
where the random variables $\{\Ev_j\sigma_j^{1/2}\ObSoNoL^{-1}(Y_j-\Ev_j\RvSoEr_j)\}_{j\geq1}$ are independent
and normally distributed with standard deviation $\beta_j=
\Ev_j\sigma_j^{1/2}\ObSoNoL^{-1/2}$ and mean $\alpha_j=
\beta_j\ObSoNoL^{-1/2}\Ev_j(\TrSo[j]-\RvSoEr_j)$. Keeping in mind the
notations used in Lemma \ref{tr:le:te} define
$v_{\Di}:=\sum_{j=\Di+1}^{\treDi}\beta_j^2$ and $r_{\Di}:=\sum_{j=\Di+1}^{\treDi}\alpha_j^2$. We observe that  Assumption
\ref{tr:as:pv}  implies that  $1\geq \beta_j^2\geq (1+1/d)^{-1}$  and
hence it follows by
employing $\min_{\Di<j\leq\treDi}\Ev_j^2\geq \min_{1\leq
  j\leq\treDi}\Ev_j^2=\mEvs[\treDi]^{-1}$ and Assumption
\ref{ad:as:ev} \eqref{ad:as:ev:c}  that
\begin{multline}\label{ad:le:pm:pr:e2}
L_\Ev (\ObSoNoL\mEvs[\treDi])^{-1}\treRa\geq L_\Ev (\ObSoNoL\mEvs[\treDi])^{-1} \ObSoNoL \treDi \oEvs[\treDi]  \geq  \treDi
\quad\text{ and }\\
(1+1/d)^{-1}(\ObSoNoL\mEvs[\treDi])^{-1} [\gb_\Di-\treRa] \leq (1+1/d)^{-1}(\ObSoNoL\mEvs[\treDi])^{-1} [\gb_\Di-\gb_{\treDi}]  \leq
r_{\Di}.
\end{multline}
Moreover, we set $t_{\Di}:=1\geq
\max_{\Di<j\leq\treDi}\beta_j^2$ and $\mu_{\Di} :=  \Ex
S_{\Di}=v_{\Di}+r_{\Di}$. Introduce the event $\cA_{\Di}:=\{S_{\Di}-\mu_{\Di}\geq - (1/4)(v_{\Di}+2r_{\Di})\}$
 and its complement $\cA_{\Di}^c:=\{S_{\Di}-\mu_{\Di}<
 -(1/4)(v_{\Di}+2r_{\Di})\}$. By employing successively Lemma
 \ref{tr:le:te}, \eqref{ad:le:pm:pr:e2} and $\gb_{\treDi}\leq \treRa$ it   follows now from \eqref{ad:le:pm:pr:e1} that
\begin{multline*}
  \Ex_{\TrSo} p_{\RvDi|\ObSo}(\Di) \leq
  \Ex_{\TrSo}\exp\big(\{-(S_{\Di}-\mu_{\Di})-\mu_{\Di}+3C_\Ev[\treDi-\Di]\}/2\big)\Ind{\cA_{\Di}}
  + \Ex_{\TrSo}\Ind{\cA_{\Di}^c}\\
  \leq\exp
  \big(\{-3v_{\Di}/4-r_{\Di}/2+3C_\Ev[\treDi-\Di]\}/2\big)
+\exp
  \big(-(1/64)(v_{\Di} + 2r_{\Di})\big)\\
\leq \exp
  \big(-r_{\Di}/4+3C_\Ev\treDi/2\big)
+\exp
  \big(-r_{\Di}/32\big)\\
\leq \exp
  \big( -\frac{[\gb_\Di-\treRa]}{4(1+1/d)\ObSoNoL\mEvs[\treDi]}+\frac{3C_\Ev L_\Ev\treRa}{2\ObSoNoL\mEvs[\treDi]}\}\big)+\exp
  \big(-\frac{[\gb_\Di-\treRa]}{32(1+1/d)\ObSoNoL\mEvs[\treDi]})\big)\\
\leq \exp
  \big(-\frac{\gb_\Di}{4(1+1/d)\ObSoNoL\mEvs[\treDi]}  + \frac{2C_\Ev
    L_\Ev\treRa}{\ObSoNoL\mEvs[\treDi]}\big)
\times\exp\big(-\frac{L_\Ev C_{\Ev}\treRa}{4\ObSoNoL\mEvs[\treDi]}\big)
\\
+\exp
  \big(-\frac{[\gb_\Di-\treRa]}{32(1+1/d)\ObSoNoL\mEvs[\treDi]}\big)
\end{multline*}
Taking into account the definition \eqref{ad:de:mp} of $\meDi$, i.e.,
$\gb_\Di> 8 L_{\Ev}C_{\Ev}(1+1/d)\treRa$ for all $1\leq\Di<\meDi$, and
$L_\Ev\treRa(\ObSoNoL\mEvs[\treDi])^{-1}\geq \treDi$ due to Assumption \ref{ad:as:ev} \eqref{ad:as:ev:c}, we obtain
\begin{equation*}
  \Ex_{\TrSo} p_{\RvDi|\ObSo}(\Di) \leq \exp\big(-\frac{L_\Ev C_{\Ev}\treRa}{4\ObSoNoL\mEvs[\treDi]}\big)
+\exp
  \big(-\frac{7
    L_{\Ev}C_{\Ev}\treRa}{32\ObSoNoL\mEvs[\treDi]}\big)\leq 2 \exp
  \big(-\frac{7C_{\Ev}}{32}\treDi\big).
\end{equation*}
Thereby, $\Ex_{\TrSo}P_{\RvDi|\ObSo}(1\leq
  \RvDi<\meDi)=\sum_{\Di=1}^{\meDi-1}\Ex_{\TrSo} p_{\RvDi|\ObSo}(\Di)\leq 2 \exp
  \big(-\frac{7C_{\Ev}}{32}\treDi + \log \DiMa \big)$ using that
  $\DiMa\geq \meDi$ which proves the assertion \ref{ad:le:pm:i}.
  Consider now \ref{ad:le:pm:ii}.  The claim holds trivially true
in case $\peDi=\DiMa$, thus suppose $\peDi<\DiMa$ and let
$\DiMa\geq\Di>\peDi\geq \treDi$. Consider again the upper bound given
in \eqref{ad:le:pm:pr:e1} where now
\begin{equation*}
-S_{\Di}
=\sum_{j=\treDi+1}^{\Di}\frac{\Ev_j^2\sigma_j}{\ObSoNoL^2}(Y_j-\Ev_j\RvSoEr_j)^2.
\end{equation*}
Employing the notations $\alpha_j$ and $\beta_j$ introduced in the
proof of \ref{ad:le:pm:i} and keeping in mind Lemma \ref{tr:le:te} we
define $v_{\Di}:=\sum_{j=\treDi+1}^{\Di}\beta_j^2$ and
$r_{\Di}:=\sum_{j=\treDi+1}^{\Di}\alpha_j^2$ where  $1\geq
\beta_j^2\geq (1+1/d)^{-1}$ due to Assumption
\ref{tr:as:pv}. Moreover, from Assumption
\ref{ad:as:ev} \eqref{ad:as:ev:a}  follows
 $\max_{\treDi<j\leq\Di}\Ev_j^2\geq \max_{\treDi<j}\Ev_j^2\leq C_\Ev
 \min_{1\leq j\leq \treDi}\Ev_j^2 =C_\Ev\mEvs[\treDi]^{-1}$  and
 taking into account in addition Assumption
\ref{ad:as:ev} \eqref{ad:as:ev:c}  that
\begin{multline}\label{ad:le:pm:pr:e3}
L_\Ev (\ObSoNoL\mEvs[\treDi])^{-1}\treRa\geq \treDi,\quad v_\Di\leq m-\treDi
\quad\text{ and }\\
C_\Ev(\ObSoNoL\mEvs[\treDi])^{-1}\treRa \geq C_\Ev(\ObSoNoL\mEvs[\treDi])^{-1} [\gb_{\treDi}-\gb_\Di]  \geq
r_{\Di}.
\end{multline}
Moreover, we set $t_{\Di}:=1\geq
\max_{\treDi<j\leq\Di}\beta_j^2$ and $\mu_{\Di}:=C_{\Ev}[\Di-\treDi]+C_{\Ev}(\mEvs[\treDi]\ObSoNoL)^{-1}[\gb_{\treDi}-\gb_\Di] \geq \Ex
S_{\Di}=v_{\Di}+r_{\Di}$. Consider now the event
$\cA_{\Di}:=\{-S_{\Di}-\mu_{\Di}\leq (C_{\Ev}[\Di -
\treDi]+2C_{\Ev}(\mEvs[\treDi]\ObSoNoL)^{-1}[\gb_{\treDi}-\gb_\Di])\}$
and its complement  $\cA_{\Di}^c:=\{-S_{\Di}-\mu_{\Di}> (C_{\Ev}[\Di -
\treDi]+2C_{\Ev}(\mEvs[\treDi]\ObSoNoL)^{-1}[\gb_{\treDi}-\gb_\Di])\}$. By employing successively Lemma
 \ref{tr:le:te}, \eqref{ad:le:pm:pr:e3} and $\gb_{\treDi}\leq \treRa$ it   follows now from \eqref{ad:le:pm:pr:e1} that
\begin{multline*}
\Ex_{\TrSo} p_{\RvDi|\ObSo}(\Di)
 \leq \Ex_{\TrSo}\exp\big(\{(-S_{\Di}-\mu_{\Di})+\mu_{\Di}+3C_\Ev[\treDi-\Di]\}/2\big)\Ind{\cA_{\Di}}
+ \Ex_{\TrSo}\Ind{\cA_{\Di}^c}\\
\leq\exp
\big(\{2C_{\Ev}[\Di - \treDi]+3C_{\Ev}(\mEvs[\treDi]\ObSoNoL)^{-1}[\gb_{\treDi}-\gb_\Di]+3C_\Ev[\treDi-\Di]\}/2\big)\\+\exp
\big(-\{C_{\Ev}[\Di - \treDi]+2C_{\Ev}(\mEvs[\treDi]\ObSoNoL)^{-1}[\gb_{\treDi}-\gb_\Di]\}/9\big)\\
\leq\exp
\big(\{C_{\Ev}[\treDi-\Di]+3C_{\Ev}(\mEvs[\treDi]\ObSoNoL)^{-1}\treRa\}/2\big)+\exp
\big(-C_\Ev [\Di-\treDi]/9 \big)\\
\leq\exp
\big(C_{\Ev}\{- \Di +3(\mEvs[\treDi]\ObSoNoL)^{-1}\treRa +L_\Ev(\ObSoNoL\mEvs[\treDi])^{-1}\treRa\}/2\big)\\+\exp
\big(-C_{\Ev}(\Di -L_\Ev(\ObSoNoL\mEvs[\treDi])^{-1}\treRa)/9\big)\\
\leq\exp
\big(C_{\Ev}\{- \Di
+5L_\Ev(\mEvs[\treDi]\ObSoNoL)^{-1}\treRa\}/2\big)\times \exp
\big(-\frac{C_{\Ev}L_\Ev\treRa}{2\mEvs[\treDi]\ObSoNoL} \big)\\+\exp
\big(-C_{\Ev}(\Di -L_\Ev(\ObSoNoL\mEvs[\treDi])^{-1}\treRa)/9\big)
\end{multline*}
Taking into account the definition \eqref{ad:de:mp} of $\peDi$, i.e.,
$\Di> 5 L_{\Ev}(\ObSoNoL\mEvs[\treDi])^{-1}\treRa$ for all $\DiMa\geq\Di>\peDi$, and
$L_\Ev\treRa(\ObSoNoL\mEvs[\treDi])^{-1}\geq \treDi$ due to Assumption \ref{ad:as:ev} \eqref{ad:as:ev:c}, we obtain
\begin{equation*}
  \Ex_{\TrSo} p_{\RvDi|\ObSo}(\Di) \leq \exp\big(-\frac{L_\Ev C_{\Ev}\treRa}{2\ObSoNoL\mEvs[\treDi]}\big)
+\exp
  \big(-\frac{4
    L_{\Ev}C_{\Ev}\treRa}{9\ObSoNoL\mEvs[\treDi]}\big)\leq 2 \exp
  \big(-\frac{4C_{\Ev}}{9}\treDi\big).
\end{equation*}
Thereby, $\Ex_{\TrSo}P_{\RvDi|\ObSo}(\peDi<\RvDi\leq \DiMa)=\sum_{\Di=\peDi+1}^{\DiMa}\Ex_{\TrSo} p_{\RvDi|\ObSo}(\Di)\leq 2 \exp
  \big(-\frac{4C_{\Ev}}{9}\treDi + \log \DiMa \big)$  which shows the
  assertion \ref{ad:le:pm:ii} and  completes the proof.
\end{proof}
\begin{proof}[\noindent{\color{darkred}\sc Proof of Lemma
    \ref{ad:le:pv}.}]\label{ad:le:pv:pr}
Consider \ref{ad:le:pv:i}. We start the proof with the observation that 
due to Assumption
\ref{ad:as:ev} \eqref{ad:as:ev:c} the condition
\eqref{tr:as:pv:ac} holds true with $L=L_\Ev$ uniformly for all $m\in\Nz$ and
$\ObSoNoL\in(0,1)$, and hence imposing Assumption \ref{tr:as:pv} the conditions of Corollary
\ref{tr:co:pb} are satisfied, which in turn setting $c:=1/(9K)$ with  $K:=((1+d^{-1})\vee
d^{-2}\HnormV{\TrSo-\RvSoEr}^2)L_\Ev$ implies
 for all $1\leq \Di\leq\DiMa$ and
$\ObSoNoL\in(0,\ObSoNoL_{\TrSy})$   that
\begin{align}\label{ad:le:pv:pr:e1}
& \Ex_{\TrSo}P_{\DiRvSo[\Di]|\ObSo}(
\HnormV{\DiRvSo[\Di]-\TrSo}^2> (4+({11}/{2})K)[\gb_{\Di}\vee\ObSoNoL {\Di}
\oEvs[\Di]])\leq 2\exp(-{\Di}/{36});\\\label{ad:le:pv:pr:e2}
& \Ex_{\TrSo}P_{\DiRvSo[\Di]|\ObSo}( \HnormV{\DiRvSo[\Di]-\TrSo}^2<\{9(1+1/d)\}^{-1}[\gb_{\Di}\vee\ObSoNoL {\Di}
\oEvs])\leq 2\exp(-\Di/(162K^2)).\hfill
\end{align}
On the other hand side, taking into account the definition \eqref{ad:de:mp}
of $\peDi$ and $\meDi$, and the monotonicity of  $(\gb_\Di)_{\Di\geq1}$ and $(\ObSoNoL \Di \oEvs[\Di])_{\Di\geq1}$
we have  for all $\meDi\leq \Di\leq \treDi$ that
\[\ObSoNoL \Di \oEvs\leq \ObSoNoL \treDi
\oEvs[\treDi]\leq \treRa\quad\text{and}\quad\gb_{\Di}\leq 8 L_\Ev C_{\Ev}(1+1/d) \treRa \]
while for all $\peDi\geq\Di\geq\treDi$ (keeping in mind Assumption
\ref{ad:as:tr}) hold
\begin{multline*}
  \Di \leq 5 L_\Ev (\ObSoNoL\mEvs[\treDi])^{-1} \treRa\leq 5 L_\Ev
  (\ObSoNoL\mEvs[\treDi])^{-1} (\kappa^o)^{-1}\ObSoNoL \treDi
  \oEvs[\treDi] \leq 
  (5
  L_\Ev/\kappa^{\TrSy}) \treDi\leq D^{\TrSy}\treDi
  \quad\text{and}\\\quad\gb_{\Di}\leq\gb_{\treDi}\leq \treRa
\end{multline*}
where $D^{\TrSy}:=D^{\TrSy}(\RvSoEr,\TrSo,\Ev):=\ceil{5 L_\Ev/\kappa^{\TrSy}}$. Due to
Assumption \ref{ad:as:ev} \eqref{ad:as:ev:b} and \eqref{ad:as:ev:c}
it follows from $\Di\leq  D^{\TrSy}\treDi$ that $\mEvs\leq
\mEvs[D^{\TrSy}\treDi]\leq \mEvs[D^{\TrSy}]\mEvs[\treDi]$ and $\oEvs\leq \mEvs\leq
\mEvs[D^{\TrSy}]\mEvs[\treDi]\leq \mEvs[D^{\TrSy}] L_\Ev \oEvs[\treDi]$ which in
turn implies $\ObSoNoL \Di \oEvs\leq   L_\Ev D^o\mEvs[D^{\TrSy}]\ObSoNoL
\treDi \oEvs[\treDi]\leq L_\Ev D^{\TrSy}\mEvs[D^{\TrSy}]\treRa$ for all $m\leq\peDi$. Combining the
upper bounds we have  $(4+11K/2)[\gb_\Di\vee\ObSoNoL \Di \oEvs]\leq K^{\TrSy}\treRa$  for all  $\meDi\leq\Di\leq\peDi$ since
$K^{\TrSy}\geq(4+11K/2)(8L_\Ev C_\Ev(1+1/d)\vee L_\Ev D^{\TrSy}\mEvs[D^{\TrSy}])$, and
together with \eqref{ad:le:pv:pr:e1} follows
\begin{multline*}
\sum_{\Di=\meDi}^{\peDi}\Ex_{\TrSo} P_{\DiRvSo|\ObSo}\big(
\HnormV{\DiRvSo[\Di]-\TrSo}^2> K^o\treRa\big)\\\leq \sum_{\Di=\meDi}^{\peDi}\Ex_{\TrSo}P_{\DiRvSo|\ObSo}\big(
\HnormV{\DiRvSo[\Di]-\TrSo}^2> (4+({11}/{2})K)[\gb_{\Di}\vee\ObSoNoL
{\Di}\oEvs[\Di]]\big)\\\leq 2\sum_{\Di=\meDi}^{\peDi}\exp(-m/36)\leq 74 \exp(-\meDi/36)
\end{multline*}
which proves the assertion \ref{ad:le:pv:i}. Consider now
\ref{ad:le:pv:ii}. We observe that by definition \eqref{tr:de:tre} of
$\treRa$ for all $\Di\in\Nz$ holds $\treRa\leq [\ObSoNoL \Di
\oEvs\vee\gb_\Di]$, and hence
$\{9(1+1/d)\}^{-1}[\gb_{\Di}\vee\ObSoNoL \Di \oEvs]\geq
(K^{\TrSy})^{-1}\treRa$ since $K^{\TrSy}\geq 9(1+1/d)$. Combining the last
estimate, \eqref{ad:le:pv:pr:e2} and $K^{\TrSy}\geq 10 K$ it follows that
\begin{multline*}
\sum_{\Di=\meDi}^{\peDi}\Ex_{\TrSo}
P_{\DiRvSo|\ObSo}\big(
\HnormV{\DiRvSo[\Di]-\TrSo}^2<(K^{\TrSy})^{-1}\treRa\big)\\\leq
\sum_{\Di=\meDi}^{\peDi}\Ex_{\TrSo}
P_{\DiRvSo|\ObSo}\big(
\HnormV{\DiRvSo[\Di]-\TrSo}^2<\{9(1+1/d)\}^{-1}[\gb_{\Di}\vee\ObSoNoL \Di \oEvs]
\big)\\\leq
2\sum_{\Di=\meDi}^{\peDi}\exp(-m/(K^{\TrSy})^2)\leq 4(K^{\TrSy})^2\exp(-\meDi/(K^{\TrSy})^2)
\end{multline*}
which shows the
  assertion \ref{ad:le:pv:ii} and  completes the proof.
\end{proof}
\begin{proof}[\noindent{\color{darkred}\sc Proof of Theorem
    \ref{ad:th:ora}.}]\label{ad:th:ora:pr}
We start the proof with the observation that  Lemma \ref{ad:le:pm}
together with Lemma \ref{ad:le:pv} \ref{ad:le:pv:i} imply
\begin{multline}\label{ad:th:ora:pr:e1}
\Ex_{\TrSo}P_{\RvDiRvSo|\ObSo}(
\HnormV{\RvDiRvSo-\TrSo}^2> K^o\treRa)=\Ex_{\TrSo}\sum_{\Di=1}^{\DiMa}p_{\RvDi|\ObSo}(\Di)P_{\DiRvSo|\ObSo}(
\HnormV{\DiRvSo[\Di]-\TrSo}^2> K^o\treRa)\\
\leq \Ex_{\TrSo}P_{\RvDi|\ObSo}(1\leq
  \RvDi<\meDi)+\Ex_{\TrSo}P_{\RvDi|\ObSo}(\peDi<
  \RvDi\leq\DiMa)\\\hfill + \sum_{\Di=\meDi}^{\peDi}\Ex_{\TrSo}P_{\DiRvSo|\ObSo}(
\HnormV{\DiRvSo[\Di]-\TrSo}^2> K^o\treRa)\\
\leq 4\exp\big(-\treDi\{C_{\Ev}/5-\log\DiMa/\treDi\}\big) + 74
\exp(-\meDi/36)
\end{multline}
 On the other hand side, from Lemma \ref{ad:le:pm}
together with Lemma \ref{ad:le:pv} \ref{ad:le:pv:ii} also follows that
\begin{multline}\label{ad:th:ora:pr:e2}
\Ex_{\TrSo}P_{\RvDiRvSo|\ObSo}(
\HnormV{\RvDiRvSo-\TrSo}^2< (K^o)^{-1}\treRa)
\leq \Ex_{\TrSo}P_{\RvDi|\ObSo}(1\leq
  \RvDi<\meDi)\\\hfill+\Ex_{\TrSo}P_{\RvDi|\ObSo}(\peDi<
  \RvDi\leq\DiMa) + \Ex_{\TrSo}\sum_{\Di=\meDi}^{\peDi}p_{\RvDi|\ObSo}(\Di)P_{\DiRvSo|\ObSo}(
\HnormV{\DiRvSo[\Di]-\TrSo}^2< (K^o)^{-1}\treRa)\\
\leq 4\exp\big(-\treDi\{C_{\Ev}/5-\log\DiMa/\treDi\}\big) + 4(K^{\TrSy})^2
\exp(-\meDi/(K^{\TrSy})^2)
\end{multline}
By combining \eqref{ad:th:ora:pr:e1} and \eqref{ad:th:ora:pr:e2} we
obtain the assertion of the theorem since $\meDi,\treDi \to \infty $ and
$\log\DiMa/\treDi=o(1)$ as $\ObSoNoL\to0$ which completes the proof.
\end{proof}
\subsection{Proof of Theorem \ref{ad:th:bo}}
The next assertion presents a concentration inequality for
Gaussian random variables. 
\begin{lem}\label{ad:le:te}Let the assumptions of Lemma \ref{tr:le:te}
  be satisfied. For all $c\geq 0$ we have
  \begin{align}\label{ad:le:te:e1}
   &\sup_{m\geq 1}(6t_m)^{-1}\exp
    \bigg(\frac{c(v_m+2r_m)}{4t_m}\bigg)\Ex\bigg(S_m-\Ex S_m-\frac{3}{2}c(v_m+2r_m)\bigg)_+\leq1
  \end{align}
where $(a)_+:=(a\vee0)$.\end{lem}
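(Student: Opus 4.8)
The plan is to use the layer-cake identity $\Ex(X)_+=\int_0^\infty P(X>u)\,du$ to reduce the statement to integrating a tail bound, and then to invoke the deviation inequality \eqref{tr:le:te:e2} of Lemma~\ref{tr:le:te}. Writing $V:=v_m+2r_m$, this identity gives
\[
\Ex\Big(S_m-\Ex S_m-\tfrac32\,cV\Big)_+=\int_0^\infty P\big(S_m-\Ex S_m>\tfrac32\,cV+u\big)\,du,
\]
so it is enough to bound the integrand uniformly in $m\ge1$ and $u\ge0$ and then integrate out $u$.

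The crucial move is to rewrite the threshold: for each $u\ge0$ we have $\tfrac32\,cV+u=\tfrac32\,c'V$ with $c':=c+\tfrac{2u}{3V}\ge c$, so applying \eqref{tr:le:te:e2} with $c'$ in place of $c$ yields $P(S_m-\Ex S_m>\tfrac32\,c'V)\le\exp(-c'(c'\wedge1)V/(4t_m))$. Because $c'\ge c\ge c\wedge1$, a short case distinction ($c'\le1$ versus $c'>1$) shows $c'(c'\wedge1)\ge(c\wedge1)\,c'=(c\wedge1)\big(c+\tfrac{2u}{3V}\big)$, whence the integrand is at most $\exp\big(-\tfrac{c(c\wedge1)V}{4t_m}\big)\exp\big(-\tfrac{(c\wedge1)u}{6t_m}\big)$.

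Integrating the exponential factor over $u\in[0,\infty)$ then produces $\Ex(S_m-\Ex S_m-\tfrac32\,cV)_+\le\tfrac{6t_m}{c\wedge1}\exp\big(-\tfrac{c(c\wedge1)V}{4t_m}\big)$; for $c\ge1$ this is exactly $6t_m\exp(-cV/(4t_m))$, i.e.\ the asserted bound, and since no constant in it depends on $m$ the supremum over $m$ is immediate. I expect the only genuine difficulty to be quantitative bookkeeping: the constant $6$ and the exponent $c/(4t_m)$ are dictated by the interplay between the factor $\tfrac32$ in the event and the $\tfrac1{4t_m}$-scaling in \eqref{tr:le:te:e2}, so the substitution $u\mapsto c'$ and the estimate of $c'(c'\wedge1)$ have to be carried out keeping all constants explicit rather than absorbed. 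The range $0<c<1$ is the most delicate point, since there the naive bound retains the factor $(c\wedge1)=c$ in two places; one either formulates the result with $c(c\wedge1)$ in the exponent (which coincides with the stated form precisely when $c\ge1$) or splits the $u$-integral at the value of $u$ for which $c'=1$ and treats the two pieces separately.
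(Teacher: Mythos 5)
Your proof is essentially the paper's proof: the same layer-cake representation, the same rewriting of the threshold as $\tfrac{3}{2}c'(v_m+2r_m)$ with $c':=c+2u/(3(v_m+2r_m))$, the same appeal to \eqref{tr:le:te:e2}, and the same integration of the resulting exponential in $u$. The one place where you are more careful is exactly the place where the paper is loose. The paper bounds the integrand by $\exp\bigl(-c'(v_m+2r_m)/(4t_m)\bigr)$, i.e.\ it tacitly replaces $c'(c'\wedge 1)$ by $c'$, and flags this with the parenthetical ``(keeping in mind that $c\geq 1$)''. Your bookkeeping shows that in general the argument delivers
\[
\Ex\Bigl(S_m-\Ex S_m-\tfrac{3}{2}c(v_m+2r_m)\Bigr)_+\;\leq\;\frac{6t_m}{c\wedge1}\,\exp\Bigl(-\frac{c(c\wedge1)(v_m+2r_m)}{4t_m}\Bigr),
\]
which coincides with the asserted $6t_m\exp\bigl(-c(v_m+2r_m)/(4t_m)\bigr)$ precisely when $c\geq1$.

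Your worry about $0<c<1$ is not a defect of your argument but a genuine imprecision in the paper: the lemma is stated ``for all $c\geq0$'' yet is applied in the proof of Lemma~\ref{ad:le:bo:rem} with $c=2/3$, where the displayed inequality does not follow from \eqref{tr:le:te:e2} (and is in fact false for $c$ near $0$; already for $c=0$ the left-hand side grows like $\sqrt{\sum_j\beta_j^4}$ and cannot be bounded by $6t_m$ alone). The bound you derive, $\tfrac{6t_m}{c\wedge1}\exp\bigl(-c(c\wedge1)(v_m+2r_m)/(4t_m)\bigr)$, is the correct replacement; plugging in $c=2/3$ in the application gives $9\Evs_1\exp(-\DiMa/9)$ in place of the quoted $6\Evs_1\exp(-\DiMa/6)$, which is weaker by a constant but entirely harmless for the downstream asymptotics. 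Splitting the $u$-integral at the point where $c'=1$, as you also suggest, would give a somewhat sharper intermediate bound for small $c$, but cannot recover the stated inequality either, since the latter simply does not hold uniformly in $c\geq0$. In short: your proof is correct, matches the paper's for $c\geq1$, and correctly identifies that the paper's statement should carry the restriction $c\geq1$ (or be restated with the $c(c\wedge1)$ exponent and the $(c\wedge1)^{-1}$ prefactor).
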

\begin{proof}[\noindent{\color{darkred}\sc Proof of Lemma \ref{ad:le:te}.}]\label{ad:le:te:pr}
The assertion follows from Lemma \ref{tr:le:te} (keeping in mind that
$c\geq 1$), indeed
\begin{multline*}
\Ex\bigg(S_m-\Ex S_m-\frac{3}{2}c(v_m+2r_m)\bigg)_+=\int_{0}^\infty P(S_m-\Ex S_m \geq x+ \frac{3}{2}c(v_m+2r_m))dx\\ =
\int_{0}^\infty P(S_m-\Ex S_m \geq \frac{3}{2}(2x/(3(v_m+2r_m))+ c)(v_m+2r_m))dx\\
\leq\int_{0}^\infty \exp \bigg(-\frac{(2x/(3(v_m+2r_m))+
  c)(v_m+2r_m)}{4t_m}\bigg)dx\\=\int_{0}^\infty \exp \bigg(-\frac{2x/3+
  c(v_m+2r_m)}{4t_m}\bigg)dx\\
=\exp \bigg(-\frac{c(v_m+2r_m)}{4t_m}\bigg)\int_{0}^\infty \exp \bigg(-\frac{x}{6t_m}\bigg)dx=\exp \bigg(-\frac{c(v_m+2r_m)}{4t_m}\bigg)(6t_m)
\end{multline*}
\end{proof}
\begin{lem}\label{ad:le:bo:rem}
If Assumption \ref{tr:as:pv} and \ref{ad:as:ev} hold true then for all $\ObSoNoL\in(0,\ObSoNoL_{\TrSy})$
\begin{enumerate}[label=\emph{\textbf{(\roman*)}},ref=\emph{\textbf{(\roman*)}}]
\item\label{ad:le:bo:rem:i} $\sum_{j=1}^{\DiMa}\sigma_j^2\Ev_j^2\ObSoNoL^{-2}\Ex_{\TrSo}\{(\ObSo_j-\Ev_j\TrSo[j])P_{\RvDi|\ObSo}(j\leq
\RvDi\leq\DiMa)\}^2$\\
\null\hfill
$\leq\ObSoNoL \peDi \oEvs[\peDi]+
10\Evs_1\exp\big(-\treDi/5+2\log\DiMa\big)$;
\item\label{ad:le:bo:rem:ii}
  $\sum_{j=1}^{\DiMa}(\RvSoEr_j-\TrSo[j])^2\Ex_{\TrSo}\Ex_{\RvDi|\ObSo}\{\Ind{\{1\leq\RvDi<j\}}
  + (\sigma_j/\RvSoVa_j)^2\Ind{\{j\leq\RvDi\leq \DiMa\}}\}+
  \sum_{j>\DiMa}(\RvSoEr_j-\TrSo[j])^2$\\\null\hfill$\leq
  \gb_{\meDi} +\HnormV{\RvSoEr-\TrSo}^2\{d^{-2}\ObSoNoL\mEvs[\meDi]+ 2\exp\big(-\treDi/5+\log\DiMa\big)\}$.
\end{enumerate}
\end{lem}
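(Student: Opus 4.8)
The plan is to prove the two parts separately; in each case I would split the sum over the coordinate index $j$ at a threshold dictated by the set $\set{\meDi,\dotsc,\peDi}$ outside which, by Lemma \ref{ad:le:pm}, the posterior $P_{\RvDi|\ObSo}$ of the random dimension has exponentially small mass. On the ``bulk'' part I would invoke the elementary inequalities recorded right after Assumption \ref{tr:as:pv}---namely $\sigma_j\Ev_j^2\ObSoNoL^{-1}\leq1$, $\sigma_j/\RvSoVa_j\leq1\wedge d^{-1}\ObSoNoL^{1/2}\Evs_j^{1/2}$ and $\oPoVa[\Di]\leq\ObSoNoL\Di\oEvs[\Di]$---and on the ``tail'' part Lemma \ref{ad:le:pm}.

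For part \ref{ad:le:bo:rem:i}, the key first step is the observation that, for every $1\leq j\leq\DiMa$, since $0\leq P_{\RvDi|\ObSo}(j\leq\RvDi\leq\DiMa)\leq1$ and since for $j>\peDi$ the event $\set{j\leq\RvDi\leq\DiMa}$ is contained in $\set{\peDi<\RvDi\leq\DiMa}$, we have $P_{\RvDi|\ObSo}(j\leq\RvDi\leq\DiMa)^2\leq\Ind{\set{j\leq\peDi}}+P_{\RvDi|\ObSo}(\peDi<\RvDi\leq\DiMa)$. Plugging this in splits the sum into a bulk term $\sum_{j=1}^{\peDi}\sigma_j^2\Ev_j^2\ObSoNoL^{-2}\,\Ex_{\TrSo}(\ObSo_j-\Ev_j\TrSo[j])^2$ and a tail term. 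For the bulk term I use $\Ex_{\TrSo}(\ObSo_j-\Ev_j\TrSo[j])^2=\ObSoNoL$ and then $\sigma_j\Ev_j^2\ObSoNoL^{-1}\leq1$ to bound it by $\sum_{j=1}^{\peDi}\sigma_j=\oPoVa[\peDi]\leq\ObSoNoL\peDi\oEvs[\peDi]$. For the tail term $\sum_{j=1}^{\DiMa}\sigma_j^2\Ev_j^2\ObSoNoL^{-2}\,\Ex_{\TrSo}\set{(\ObSo_j-\Ev_j\TrSo[j])^2P_{\RvDi|\ObSo}(\peDi<\RvDi\leq\DiMa)}$ I would apply the Cauchy--Schwarz inequality to $\Ex_{\TrSo}\set{(\ObSo_j-\Ev_j\TrSo[j])^2P_{\RvDi|\ObSo}(\peDi<\RvDi\leq\DiMa)}$, using $\Ex_{\TrSo}(\ObSo_j-\Ev_j\TrSo[j])^4=3\ObSoNoL^2$, then $P_{\RvDi|\ObSo}(\peDi<\RvDi\leq\DiMa)^2\leq P_{\RvDi|\ObSo}(\peDi<\RvDi\leq\DiMa)$ and Lemma \ref{ad:le:pm}\ref{ad:le:pm:ii}, to bound it by $\sqrt6\,\ObSoNoL\exp(-(2C_\Ev/9)\treDi+(1/2)\log\DiMa)$; combining with $\sigma_j^2\Ev_j^2\ObSoNoL^{-2}\leq\Evs_j\leq\mEvs[\DiMa]\leq\Evs_1/\ObSoNoL$ and the fact that there are at most $\DiMa$ summands, the factor $\ObSoNoL$ cancels and the tail term is at most $\sqrt6\,\Evs_1\,\DiMa^{3/2}\exp(-(2C_\Ev/9)\treDi)\leq10\,\Evs_1\exp(-\treDi/5+2\log\DiMa)$, where I used $C_\Ev\geq1$ (so that $2C_\Ev/9\geq1/5$) and $\DiMa^{3/2}\leq\exp(2\log\DiMa)$. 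Adding the two contributions gives part \ref{ad:le:bo:rem:i}.

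For part \ref{ad:le:bo:rem:ii} I would split the left-hand side at $\meDi$. On the range $j>\meDi$ (including $j>\DiMa$), using $\sigma_j/\RvSoVa_j\leq1$ the bracket $\Ind{\set{1\leq\RvDi<j}}+(\sigma_j/\RvSoVa_j)^2\Ind{\set{j\leq\RvDi\leq\DiMa}}$ is at most $\Ind{\set{1\leq\RvDi\leq\DiMa}}=1$, so this contribution is at most $\sum_{j>\meDi}(\RvSoEr_j-\TrSo[j])^2=\gb_{\meDi}$. On the range $1\leq j\leq\meDi$ I handle the two indicator terms separately: since $\set{1\leq\RvDi<j}\subseteq\set{1\leq\RvDi<\meDi}$, Lemma \ref{ad:le:pm}\ref{ad:le:pm:i} gives $\Ex_{\TrSo}\Ex_{\RvDi|\ObSo}\Ind{\set{1\leq\RvDi<j}}\leq2\exp(-(C_\Ev/5)\treDi+\log\DiMa)\leq2\exp(-\treDi/5+\log\DiMa)$, while $(\sigma_j/\RvSoVa_j)^2\leq d^{-2}\ObSoNoL\Evs_j\leq d^{-2}\ObSoNoL\mEvs[\meDi]$ and $\Ex_{\TrSo}\Ex_{\RvDi|\ObSo}\Ind{\set{j\leq\RvDi\leq\DiMa}}\leq1$. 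Summing $(\RvSoEr_j-\TrSo[j])^2\set{d^{-2}\ObSoNoL\mEvs[\meDi]+2\exp(-\treDi/5+\log\DiMa)}$ over $1\leq j\leq\meDi$ gives $\HnormV{\RvSoEr-\TrSo}^2\set{d^{-2}\ObSoNoL\mEvs[\meDi]+2\exp(-\treDi/5+\log\DiMa)}$, and adding the $\gb_{\meDi}$ contribution yields the claim.

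I expect the only point requiring care to be the tail estimate in part \ref{ad:le:bo:rem:i}: one has to accept the crude bound $\sigma_j^2\Ev_j^2\ObSoNoL^{-2}\leq\Evs_1/\ObSoNoL$, which over $\DiMa$ indices contributes a factor $\DiMa/\ObSoNoL$, and it is essential that the $\ObSoNoL^{-1}$ there is cancelled exactly by the $\ObSoNoL$ coming from the Gaussian fourth moment in the Cauchy--Schwarz step, so that the tail term stays exponentially small in $\treDi$ up to the harmless factor $\exp(2\log\DiMa)$. Everything else is routine bookkeeping with the elementary inequalities following Assumption \ref{tr:as:pv} and with Lemma \ref{ad:le:pm}.
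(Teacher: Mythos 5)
Your proof is correct. Part \ref{ad:le:bo:rem:ii} follows the paper's own argument essentially verbatim (same split at $\meDi$, same use of $(\sigma_j/\RvSoVa_j)^2\leq d^{-2}\ObSoNoL\Evs_j$ on the bulk and of Lemma \ref{ad:le:pm}\ref{ad:le:pm:i} for the indicator term), so there is nothing to add there.

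For part \ref{ad:le:bo:rem:i} you take a genuinely different route on the tail term. The paper first applies Jensen to get $\{\ObSoNo_jP_{\RvDi|\ObSo}(j\leq\RvDi\leq\DiMa)\}^2\leq\ObSoNo_j^2P_{\RvDi|\ObSo}(j\leq\RvDi\leq\DiMa)$, arrives at the same bulk/tail decomposition as you, and then controls $\Ex_{\TrSo}\{S_{\DiMa}\,P_{\RvDi|\ObSo}(\peDi<\RvDi\leq\DiMa)\}$ with $S_{\DiMa}=\sum_{j\leq\DiMa}\ObSoNoL\Evs_j\ObSoNo_j^2$ via the splitting $\Ex\{SY\}\leq\Ex(S-a)_++a\,\Ex Y$, which requires the auxiliary expectation-tail bound of Lemma \ref{ad:le:te} (giving the $\exp(-\DiMa/6)$ term and forcing the case distinction $\peDi=\DiMa$ versus $\peDi<\DiMa$, the latter using $\DiMa>5\treDi$). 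You instead decouple the noise from the posterior weight by Cauchy--Schwarz with the Gaussian fourth moment, paying for it by halving the exponent from Lemma \ref{ad:le:pm}\ref{ad:le:pm:ii} (you get $\exp(-\tfrac{2C_\Ev}{9}\treDi+\tfrac12\log\DiMa)$ per summand) and by the crude bound $\sigma_j^2\Ev_j^2\ObSoNoL^{-2}\leq\Evs_1/\ObSoNoL$ over $\DiMa$ indices. Your bookkeeping checks out: $2C_\Ev/9\geq1/5$ since $C_\Ev\geq1$, $\sqrt6\,\DiMa^{3/2}\leq10\,\DiMa^{2}$, and the $\ObSoNoL^{-1}$ is indeed cancelled by the $\ObSoNoL$ from the fourth moment, so the stated constant $10\Evs_1\exp(-\treDi/5+2\log\DiMa)$ is recovered. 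Your version is more elementary (it dispenses with Lemma \ref{ad:le:te} and with the case distinction on $\peDi$) at the cost of a weaker exponential rate in the tail, which is immaterial for the final statement; the paper's version would be preferable only if one needed the sharper exponent $4C_\Ev/9$.
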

\begin{proof}[\noindent{\color{darkred}\sc Proof of Lemma \ref{ad:le:bo:rem}.}]\label{ad:le:bo:rem:pr}
Consider \ref{ad:le:bo:rem:i}. We start with the observation that the random variables
$\{\ObSoNo_j:=\ObSoNoL^{-1/2}(\ObSo_j-\Ev_j\TrSo[j])\}_{j\geq1}$ are
independent and standard normally distributed. Moreover, applying
Jensen's inequality we have
\begin{multline*}
  \{\ObSoNo_jP_{\RvDi|\ObSo}(j\leq
  \RvDi\leq\DiMa)\}^2=\{\Ex_{\RvDi|\ObSo}\ObSoNo_j\Ind{\{j\leq\RvDi\leq\DiMa\}}\}^2\leq \Ex_{\RvDi|\ObSo}\ObSoNo_j^2\Ind{\{j\leq\RvDi\leq\DiMa\}}
  \\
  =\ObSoNo_j^2P_{\RvDi|\ObSo}(j\leq
  \RvDi\leq\DiMa).
\end{multline*}
We split the sum into two parts which we bound separately. Precisely,
\begin{multline}\label{ad:le:bo:rem:pr:e1}
\sum_{j=1}^{\DiMa}\sigma_j^2\Ev_j^2\ObSoNoL^{-2}\{(\ObSo_j-\Ev_j\TrSo[j])P_{\RvDi|\ObSo}(j\leq
\RvDi\leq\DiMa)\}^2\\
\leq \sum_{j=1}^{\peDi}\ObSoNoL\Evs_j\ObSoNo^2_j +
\sum_{j=1}^{\DiMa}\ObSoNoL\Evs_j\ObSoNo_j^2P_{\RvDi|\ObSo}(\peDi<
\RvDi\leq\DiMa)
\end{multline}
where we used that $\sigma_j\leq \ObSoNoL\Evs_j$.
Keeping in mind the notations used in Lemma \ref{ad:le:te} let
$S_{\DiMa}:=\sum_{j=1}^{\DiMa}\ObSoNoL\Evs_j\ObSoNo_j^2$ and
observe that $\alpha_j=0$ and $\beta_j^2=\ObSoNoL\Evs_j$, and hence
$r_{\DiMa}=0$.  Keeping in mind that
$\DiMa:=\max\{1\leq m\leq \gauss{\ObSoNoL^{-1}}:
\ObSoNoL\mEvs\leq \Evs_1\}$ we set $t_{\DiMa}:=\Evs_1\geq \ObSoNoL \mEvs[\DiMa]
=\max_{1\leq j\leq\DiMa}\beta_j^2$  and $v_{\DiMa} := \Evs_1\DiMa =
\DiMa t_{\DiMa}\geq\sum_{j=1}^{\DiMa}\beta_j^2$, where $\Ex_{\TrSo} S_{\DiMa}\leq
v_{\DiMa}$. From Lemma \ref{ad:le:te} with $c=2/3$ follows that $
\Ex_{\TrSo}(S_{\DiMa}-2\Evs_1\DiMa)_+\leq
(6t_{\DiMa})\exp(-v_{\DiMa}/(6t_{\DiMa}))=(6\Evs_1)\exp(-\DiMa/6)$,
and hence
\begin{multline}\label{ad:le:bo:rem:pr:e2}
\sum_{j=1}^{\DiMa}\ObSoNoL\Evs_j\Ex_{\TrSo}\{\ObSoNo_j^2 P_{\RvDi|\ObSo}(\peDi<\RvDi\leq\DiMa)\}
\\\leq
\Ex\big(S_{\DiMa}-2\Evs_1\DiMa\big)_++2\Evs_1\DiMa\Ex_{\TrSo}P_{\RvDi|\ObSo}(\peDi<
\RvDi\leq\DiMa)\\
\leq 6\Evs_1\exp(-\DiMa/6)+2\Evs_1\DiMa\Ex_{\TrSo}P_{\RvDi|\ObSo}(\peDi<\RvDi\leq\DiMa).
\end{multline}
We distinguish two cases. First, if $\peDi=\DiMa$, then assertion \ref{ad:le:bo:rem:i} follows  by combining
\eqref{ad:le:bo:rem:pr:e1} and $\Ex_{\TrSo}\ObSoNo_j^2=1$. Second, if
$\peDi<\DiMa$, then the definition \eqref{ad:de:mp} of $\peDi$ implies
$\DiMa>5\treDi$ which in turn implies the assertion \ref{ad:le:bo:rem:i} by combining
\eqref{ad:le:bo:rem:pr:e1}, $\Ex_{\TrSo}\ObSoNo_j^2=1$,  \eqref{ad:le:bo:rem:pr:e2} and Lemma \ref{ad:le:pm} \ref{ad:le:pm:ii}. Consider
\ref{ad:le:bo:rem:ii}. Due to Assumption \ref{tr:as:pv} we have
$(\sigma_j/\RvSoVa_j)^{2}\leq (1\wedge
d^{-2}\ObSoNoL\Evs_j)$ which we will use without further reference. Splitting the first sum into two parts we
obtain
\begin{multline*}
\sum_{j=1}^{\DiMa}(\RvSoEr_j-\TrSo[j])^2\Ex_{\TrSo}\{\Ind{\{1\leq\RvDi<j\}}
  + (\sigma_j/\RvSoVa_j)^2\Ind{\{j\leq\RvDi\leq \DiMa\}}\}+ \sum_{j>\DiMa}(\RvSoEr_j-\TrSo[j])^2\\
\hspace*{5ex}\leq  \sum_{j=1}^{\meDi}(\RvSoEr_j-\TrSo[j])^2\Ex_{\TrSo}\{\Ind{\{1\leq\RvDi<j\}}
  + d^{-2}\ObSoNoL\Evs_j\}\hfill\\
+ \sum_{j=\meDi+1}^{\DiMa}(\RvSoEr_j-\TrSo[j])^2\Ex_{\TrSo}\{\Ind{\{1\leq\RvDi<j\}}
  + \Ind{\{j\leq\RvDi\leq \DiMa\}}\}+
  \sum_{j>\DiMa}(\RvSoEr_j-\TrSo[j])^2\\\hfill
\leq \HnormV{\RvSoEr-\TrSo}^2\{\Ex_{\TrSo}P_{\RvDi|\ObSo}(1\leq\RvDi<\meDi)+
d^{-2}\ObSoNoL\mEvs[\meDi]\}+\sum_{j>\meDi}(\RvSoEr_j-\TrSo[j])^2
\end{multline*}
The assertion \ref{ad:le:bo:rem:ii} follows now by combining
the last estimate and Lemma \ref{ad:le:pm} \ref{ad:le:pm:i}, which
completes the proof.
\end{proof}
\begin{proof}[\noindent{\color{darkred}\sc Proof of Theorem
    \ref{ad:th:bo}.}]\label{ad:th:bo:pr}
We start the proof with the observation that
$\hSo_j-\TrSo[j]=(\DiPoEr{}{j}-\TrSo[j])P_{\RvDi|\ObSo}(j\leq \RvDi\leq
  \DiMa)+(\RvSoEr_j-\TrSo[j])P_{\RvDi|\ObSo}(1\leq\RvDi<j)\}$ for all $1\leq
  j\leq\DiMa$ and $\hSo_j-\TrSo[j]=\RvSoEr_j-\TrSo[j]$ for all $j>\DiMa$. From the identity
  $\DiPoEr{}{j}-\TrSo[j]=(\sigma_j/\RvSoVa_j)(\RvSoEr_j-\TrSo[j])+(\sigma_j\Ev_j\ObSoNoL^{-1})(\ObSo_j-\Ev_j\TrSo[j])$
  and Lemma \ref{ad:le:bo:rem}  follows that
  \begin{multline*}
   \Ex_{\TrSo} \HnormV{\hSo-\TrSo}^2\leq \sum_{j=1}^{\DiMa}2\sigma^2\Ev^2_j\ObSoNoL^{-2}\Ex_{\TrSo}\{(\ObSo_j-\Ev_j\TrSo[j])P_{\RvDi|\ObSo}(j\leq \RvDi\leq
  \DiMa)\\ + \sum_{j=1}^{\DiMa}2(\RvSoEr_j-\TrSo[j])^2\Ex_{\TrSo}\{(\sigma_j/\RvSoVa_j)P_{\RvDi|\ObSo}(j\leq \RvDi\leq
  \DiMa)+
  P_{\RvDi|\ObSo}(1\leq\RvDi<j)\}^2+\sum_{j>\DiMa}(\RvSoEr_j-\TrSo[j])^2\\
\leq 2\{\ObSoNoL \peDi \oEvs[\peDi]
+10\Evs_1\exp\big(-\treDi/5+2\log\DiMa\big)\}\\
+ 2\{\gb_{\meDi} +\HnormV{\RvSoEr-\TrSo}^2\{d^{-2}\ObSoNoL\mEvs[\meDi]+ 2\exp\big(-\treDi/5+\log\DiMa\big)\}\}.
 \end{multline*}
On the other hand side, taking into account the definition \eqref{ad:de:mp} of $\meDi$ and
$\peDi$, we have show in the proof of Lemma \ref{ad:le:pv} that $\gb_{\meDi}\leq 8L_\Ev C_{\Ev}(1+1/d)\treRa$   and
$\ObSoNoL \peDi \oEvs[\peDi]\leq L_\Ev D^o\mEvs[D^o]\treRa$, while
trivially
$\ObSoNoL\mEvs[\meDi]\leq\ObSoNoL\mEvs[\treDi]\leq\treRa$. By
combination of these estimates we obtain
  \begin{multline*}
   \Ex_{\TrSo} \HnormV{\hSo-\TrSo}^2\leq \{2L_\Ev
   D^o\mEvs[D^o]+16 L_\Ev C_{\Ev}(1+1/d)+2
   d^{-2}\HnormV{\RvSoEr-\TrSo}^2\}\treRa\\
+(20\Evs_1+ 4\HnormV{\RvSoEr-\TrSo}^2)\exp\big(-\treDi/5+2\log\DiMa-\log\treRa\big)\}\treRa
 \end{multline*}
From the last bound follows the assertion of the theorem since
$(2\log\DiMa-\log\treRa)/\treDi \to 0$ as $\ObSoNoL\to0$ which completes the proof.
\end{proof}

\subsection{Proof of Theorem \ref{ad:th:ra:mm}}
\begin{proof}[\noindent{\color{darkred}\sc Proof of Lemma
    \ref{ad:le:opm}.}]\label{ad:le:opm:pr} The proof follows along the
  lines of the proof of Lemma \ref{ad:le:pm}, where we replace
  $\meDi$, $\peDi$, $\treDi$ and $\treRa$ by its counterpart
  $\moeDi$, $\poeDi$, $\oeDi$ and $\oeRa$, respectively.
 Moreover, we will use without  further reference, that for all
 $\TrSo\in\cwrSo$  the bias is bound by  $\gb_\Di\leq
 \rSo \wSo_m$, for all $m\in\Nz$, and hence $\gb_{\oeDi}\leq (1\vee\rSo)\oeRa$.

Consider \ref{ad:le:opm:i}. The claim holds trivially true
in case $\moeDi=1$, thus suppose $\moeDi>1$ and let
$1\leq \Di<\moeDi\leq \oeDi$. Define
$S_{\Di}:=\normV{\hSo^{\oeDi}-\RvSoEr}_{\sigma}^2-\normV{\hSo^\Di-\RvSoEr}_{\sigma}^2$. Let
 $\cA_{\Di}$ and $\cA_{\Di}^c$,
 respectively, be an event and its complement defined as in the Proof of Lemma \ref{ad:le:pm}, then it follows
\begin{equation}\label{ad:le:opm:pr:e1}
 p_{\RvDi|\ObSo}(\Di)\leq \exp\bigg(\frac{1}{2}\big\{-S_{\Di}+3 C_\Ev [\oeDi-\Di]\big\}\bigg)\Ind{\cA_{\Di}}
+ \Ind{\cA_{\Di}^c}
\end{equation}
where $S_{\Di}=\sum_{j=\Di+1}^{\oeDi}\frac{\Ev_j^2\sigma_j}{\ObSoNoL^2}(Y_j-\Ev_j\RvSoEr_j)^2$.
We use the
notation introduced in Lemma \ref{ad:le:pm}, where again $1\geq
\beta_j^2\geq (1+1/d)^{-1}$  due  to Assumption
\ref{tr:as:pv}  and by
employing $\min_{\Di<j\leq\oeDi}\Ev_j^2\geq \mEvs[\oeDi]^{-1}$
together with Assumption
\ref{ad:as:ev} \eqref{ad:as:ev:c}
\begin{multline}\label{ad:le:opm:pr:e2}
L_\Ev (\ObSoNoL\mEvs[\oeDi])^{-1}(1\vee\rSo)\oeRa\geq L_\Ev (\ObSoNoL\mEvs[\oeDi])^{-1} \ObSoNoL \oeDi \oEvs[\oeDi]  \geq  \oeDi
\quad\text{ and }\\
(1+1/d)^{-1}(\ObSoNoL\mEvs[\oeDi])^{-1} [\gb_\Di-(1\vee\rSo)\oeRa] \leq (1+1/d)^{-1}(\ObSoNoL\mEvs[\oeDi])^{-1} [\gb_\Di-\gb_{\oeDi}]  \leq
r_{\Di}.
\end{multline}
By employing successively Lemma
 \ref{tr:le:te}, \eqref{ad:le:opm:pr:e2} and $\gb_{\oeDi}\leq
 (1\vee\rSo)\oeRa$ for all $\TrSo\in\cwrSo$ it   follows now from \eqref{ad:le:opm:pr:e1} that
\begin{multline*}
  \Ex_{\TrSo} p_{\RvDi|\ObSo}(\Di) 
\leq \exp
  \big(-\frac{\gb_\Di}{4(1+1/d)\ObSoNoL\mEvs[\oeDi]}  + \frac{2C_\Ev
    L_\Ev(1\vee\rSo)\oeRa}{\ObSoNoL\mEvs[\oeDi]}\big)
\times\exp\big(-\frac{L_\Ev C_{\Ev}(1\vee\rSo)\oeRa}{4\ObSoNoL\mEvs[\oeDi]}\big)
\\
+\exp
  \big(-\frac{[\gb_\Di-(1\vee\rSo)\oeRa]}{32(1+1/d)\ObSoNoL\mEvs[\oeDi]}\big).
\end{multline*}
Taking into account the definition \eqref{ad:de:omp} of $\moeDi$, i.e.,
$\gb_\Di> 8 L_{\Ev}C_{\Ev}(1+1/d)(1\vee\rSo)\oeRa$ for all $1\leq\Di<\moeDi$, and
$L_\Ev\oeRa(\ObSoNoL\mEvs[\oeDi])^{-1}\geq \oeDi$ due to Assumption \ref{ad:as:ev} \eqref{ad:as:ev:c}, we obtain
\begin{equation*}
  \Ex_{\TrSo} p_{\RvDi|\ObSo}(\Di) \leq \exp\big(-\frac{L_\Ev C_{\Ev}(1\vee\rSo)\oeRa}{4\ObSoNoL\mEvs[\oeDi]}\big)
+\exp
  \big(-\frac{7
    L_{\Ev}C_{\Ev}(1\vee\rSo)\oeRa}{32\ObSoNoL\mEvs[\oeDi]}\big)\leq 2 \exp
  \big(-\frac{7C_{\Ev}(1\vee\rSo)}{32}\oeDi\big).
\end{equation*}
Thereby, $\Ex_{\TrSo}P_{\RvDi|\ObSo}(1\leq
  \RvDi<\moeDi)=\sum_{\Di=1}^{\moeDi-1}\Ex_{\TrSo} p_{\RvDi|\ObSo}(\Di)\leq 2 \exp
  \big(-\frac{7C_{\Ev}(1\vee\rSo)}{32}\oeDi + \log \DiMa \big)$ using that
  $\DiMa\geq \moeDi$ which proves the assertion \ref{ad:le:opm:i}.
  Consider now \ref{ad:le:opm:ii}.  The claim holds trivially true
in case $\poeDi=\DiMa$, thus suppose $\poeDi<\DiMa$ and let
$\DiMa\geq\Di>\poeDi\geq \oeDi$. Consider the upper bound \eqref{ad:le:opm:pr:e1} where $-S_{\Di}=\sum_{j=\oeDi+1}^{\Di}\frac{\Ev_j^2\sigma_j}{\ObSoNoL^2}(Y_j-\Ev_j\RvSoEr_j)^2$.
Employing the notations introduced in the
Proof of Lemma \ref{ad:le:pm} where we had $1\geq
\beta_j^2\geq (1+1/d)^{-1}$ due to Assumption
\ref{tr:as:pv}, we obtain from Assumption
\ref{ad:as:ev} \eqref{ad:as:ev:a}  that
$\max_{\oeDi<j\leq\Di}\Ev_j^2\leq C_\Ev\mEvs[\oeDi]^{-1}$ 
and
 taking into account in addition Assumption
\ref{ad:as:ev} \eqref{ad:as:ev:c}  that
\begin{multline}\label{ad:le:opm:pr:e3}
L_\Ev (\ObSoNoL\mEvs[\oeDi])^{-1}(1\vee\rSo)\oeRa\geq \oeDi,\quad v_\Di\leq m-\oeDi
\quad\text{ and }\\
C_\Ev(\ObSoNoL\mEvs[\oeDi])^{-1}(1\vee\rSo)\oeRa \geq C_\Ev(\ObSoNoL\mEvs[\oeDi])^{-1} [\gb_{\oeDi}-\gb_\Di]  \geq
r_{\Di}.
\end{multline}
By employing successively Lemma
 \ref{tr:le:te}, \eqref{ad:le:opm:pr:e3} and $\gb_{\oeDi}\leq (1\vee\rSo)\treRa$ it   follows now from \eqref{ad:le:opm:pr:e1} that
\begin{multline*}
\Ex_{\TrSo} p_{\RvDi|\ObSo}(\Di)
\leq\exp
\big(C_{\Ev}\{- \Di
+5L_\Ev(\mEvs[\oeDi]\ObSoNoL)^{-1}(1\vee\rSo)\oeRa\}/2\big)\times \exp
\big(-\frac{C_{\Ev}L_\Ev(1\vee\rSo)\oeRa}{2\mEvs[\oeDi]\ObSoNoL} \big)\\+\exp
\big(-C_{\Ev}(\Di -L_\Ev(\ObSoNoL\mEvs[\oeDi])^{-1}(1\vee\rSo)\oeRa)/9\big)
\end{multline*}
Taking into account the definition \eqref{ad:de:omp} of $\poeDi$, i.e.,
$\Di> 5 L_{\Ev}(\ObSoNoL\mEvs[\oeDi])^{-1}(1\vee\rSo)\oeRa$ for all $\DiMa\geq\Di>\poeDi$, and
$L_\Ev(1\vee\rSo)\oeRa(\ObSoNoL\mEvs[\oeDi])^{-1}\geq (1\vee\rSo)\oeDi$ due to Assumption \ref{ad:as:ev} \eqref{ad:as:ev:c}, we obtain
\begin{equation*}
  \Ex_{\TrSo} p_{\RvDi|\ObSo}(\Di) \leq \exp\big(-\frac{L_\Ev C_{\Ev}(1\vee\rSo)\oeRa}{2\ObSoNoL\mEvs[\oeDi]}\big)
+\exp
  \big(-\frac{4
    L_{\Ev}C_{\Ev}(1\vee\rSo)\oeRa}{9\ObSoNoL\mEvs[\oeDi]}\big)\leq 2 \exp
  \big(-\frac{4C_{\Ev}(1\vee\rSo)}{9}\oeDi\big).
\end{equation*}
Thereby, $\Ex_{\TrSo}P_{\RvDi|\ObSo}(\poeDi<\RvDi\leq \DiMa)=\sum_{\Di=\poeDi+1}^{\DiMa}\Ex_{\TrSo} p_{\RvDi|\ObSo}(\Di)\leq 2 \exp
  \big(-\frac{4C_{\Ev}(1\vee\rSo)}{9}\oeDi + \log \DiMa \big)$  which shows the
  assertion \ref{ad:le:opm:ii} and  completes the proof.
\end{proof}
\begin{proof}[\noindent{\color{darkred}\sc Proof of Theorem
    \ref{ad:th:ra:mm}.}]\label{ad:th:ra:mm:pr}
We start the proof with the observation that  
due to Assumption
\ref{ad:as:ev} \eqref{ad:as:ev:c} the condition
\eqref{tr:as:pv:ac} holds true with $L=L_\Ev$ uniformly for all $m\in\Nz$ and
$\ObSoNoL\in(0,1)$, and hence imposing Assumption \ref{tr:as:pv} the conditions of Corollary
\ref{tr:co:pb} \eqref{tr:co:pb:e1} are satisfied, which in turn implies, by 
setting   $K:=((1+1/d)\vee
\rSo/d^{2})L_\Ev\geq ((1+d^{-1})\vee
d^{-2}\HnormV{\TrSo-\RvSoEr}^2)L_\Ev$, 
that for all $1\leq \Di\leq\DiMa$ and
$\ObSoNoL\in(0,\ObSoNoL_{\OpSy})$   
 \begin{align}\label{ad:th:ra:mm:pr:e1}
& \Ex_{\TrSo}P_{\DiRvSo[\Di]|\ObSo}(
\HnormV{\DiRvSo[\Di]-\TrSo}^2> (4+(11/{2})K)[\gb_{\Di}\vee\ObSoNoL {\Di}
\oEvs[\Di]])\leq 2\exp(-{\Di}/{36}).\hfill
\end{align}
Moreover, exploiting the inequality below \eqref{tr:pr:pbm:pr:e2} with
  $c_1=1/3$ and $c_2\geq 1$, it is possible to prove a slightly modified version of Corollary  \ref{tr:co:pb} \eqref{tr:co:pb:e1} which implies for all $c_2\geq 1$
 \begin{equation}\label{ad:th:ra:mm:pr:e2}
\Ex_{\TrSo}P_{\DiRvSo[\Di]|\ObSo}(
\HnormV{\DiRvSo[\Di]-\TrSo}^2> 16c_2K[\gb_{\Di}\vee\ObSoNoL {\Di}
\oEvs[\Di]])
\leq 2\exp(-c_2{\Di}/{12}).
\end{equation}%
Consider \ref{ad:th:ra:mm:i}. Following line by line the proof of
Lemma \ref{ad:le:pv} \ref{ad:le:pv:i}, using
\eqref{ad:th:ra:mm:pr:e1} rather than \eqref{ad:le:pv:pr:e1} and
exploiting $[\gb_\Di\vee\ObSoNoL\Di\oEvs]\leq 8 L_\Ev
C_\Ev(1+1/d){(1\vee\rSo)}\oeRa$ for all $\moeDi\leq \Di\leq\oeDi$ and
$[\gb_\Di\vee\ObSoNoL\Di\oEvs]\leq L_\Ev D^{\OpSy}\mEvs[D^{\OpSy}]{(1\vee\rSo)}\oeRa$ with $D^{\OpSy}:=\ceil{5 L_\Ev/\kappa^{\OpSy}}$
for all $\oeDi\leq \Di\leq\poeDi$ (keep in mind that $m\leq D^{\OpSy}\oeDi$) , we obtain
\begin{multline*}
\sum_{\Di=\moeDi}^{\poeDi}\Ex_{\TrSo} P_{\DiRvSo|\ObSo}\big(
\HnormV{\DiRvSo[\Di]-\TrSo}^2> K^{\OpSy}\oeRa\big)\\\leq \sum_{\Di=\moeDi}^{\poeDi}\Ex_{\TrSo}P_{\DiRvSo|\ObSo}\big(
\HnormV{\DiRvSo[\Di]-\TrSo}^2> (4+({11}/{2})K)[\gb_{\Di}\vee\ObSoNoL
{\Di}\oEvs[\Di]]\big)\\\leq 2\sum_{\Di=\moeDi}^{\poeDi}\exp(-m/36)\leq 74 \exp(-\moeDi/36).
\end{multline*}
Combining the last estimate, Lemma \ref{ad:le:opm} and the decomposition
\eqref{ad:th:ora:pr:e1} used in the proof of Theorem
\ref{ad:th:ora} (with $\meDi$ and $\peDi$ replaced by 
  $\moeDi$, $\poeDi$) it follows that
\begin{multline}\label{ad:th:ra:mm:pr:e3}
\Ex_{\TrSo}P_{\RvDiRvSo|\ObSo}(
\HnormV{\RvDiRvSo-\TrSo}^2> K^{\OpSy}\oeRa)
\\
\leq 4\exp\big(-\oeDi\{C_{\Ev}/5-\log\DiMa/\oeDi\}\big) + 74
\exp(-\moeDi/36)
\end{multline}
Taking into account that $\oeDi\to\infty$ and $\log \DiMa/\oeDi=o(1)$
as $\ObSoNoL\to0$, we obtain the assertion \ref{ad:th:ra:mm:i} of the
Theorem for any $\TrSo\in\cwrSo$ such that $\moeDi\to\infty$ as
$\ObSoNoL\to0$. On the other hand side, if   $\TrSo\in\cwrSo$ such
that $\moeDi\not\to\infty$, i.e., $\sup_{\ObSoNoL}\moeDi<\infty$, then there exists
$\ObSoNoL_\TrSy\in(0,1)$ such that $\moeDi[G_{\ObSoNoL_{\TrSy}}]=\moeDi$ for all
$\ObSoNoL\in(0,\ObSoNoL_\TrSy)$ (keep in mind that
$(\moeDi)_{\ObSoNoL}$ is an integer-valued monotonically increasing
sequence). Moreover, by construction
$\gb_{\moeDi[G_{\ObSoNoL_{\TrSy}}]}\leq 8 L_\Ev C_\Ev(1+1/d){(1\vee\rSo)}\oeRa$ for
all $\ObSoNoL\in(0,\ObSoNoL_\TrSy)$ which in turn implies
$\gb_\Di\leq \gb_{\moeDi[G_{\ObSoNoL_{\TrSy}}]}=0$ for all $\Di\geq \moeDi[G_{\ObSoNoL_{\TrSy}}]$, since $\oeRa=o(1)$ as
$\ObSoNoL\to0$. Thereby, for all $\Di\geq \moeDi[G_{\ObSoNoL_{\TrSy}}]$
follows $\oeRa/[\gb_{\Di}\vee\ObSoNoL {\Di}
\oEvs[\Di]]=\oeRa/[\ObSoNoL {\Di}\oEvs[\Di]]\geq[\ObSoNoL
{\oeDi}\oEvs[\oeDi]]/[\ObSoNoL {\Di}\oEvs[\Di]]\geq \oeDi/[L_\Ev
\Di]$ using that $L_\Ev\oEvs[\oeDi]\geq\mEvs[\oeDi]\geq\mEvs\geq\oEvs$ due
to Assumption \ref{ad:as:ev} \eqref{ad:as:ev:c}, which in turn  together
with $K^{\OpSy}\oeRa/[\gb_{\Di}\vee\ObSoNoL {\Di}
\oEvs[\Di]]\geq K^{\OpSy}\oeDi/[L_\Ev {\Di}] = 16 c_2 K$, $c_2:=(8C_\Ev(1+1/d)\vee
D^{\OpSy}\mEvs[D^{\OpSy}]){(1\vee\rSo)} \oeDi/{\Di}\geq1$  and \eqref{ad:th:ra:mm:pr:e2}  implies 
\begin{multline*}
\sum_{\Di=\moeDi}^{\poeDi}\Ex_{\TrSo} P_{\DiRvSo|\ObSo}\big(
\HnormV{\DiRvSo[\Di]-\TrSo}^2> K^{\OpSy}\oeRa\big)
\\\leq 2\exp(-(8C_\Ev(1+1/d)\vee
D^{\OpSy}\mEvs[D^{\OpSy}]){(1\vee\rSo)} \oeDi/12 + \log \DiMa)
\\\leq 2 \exp(-C_\Ev\oeDi/5 + \log \DiMa).
\end{multline*}
Consequently, we have  
\begin{equation*}\Ex_{\TrSo}P_{\RvDiRvSo|\ObSo}(
\HnormV{\RvDiRvSo-\TrSo}^2> K^{\OpSy}\oeRa)
\leq 6\exp\big(-\oeDi\{C_{\Ev}/5-\log\DiMa/\oeDi\}\big) 
\end{equation*}
which shows  that assertion \ref{ad:th:ra:mm:i} holds  for any
$\TrSo\in\cwrSo$ since $\oeDi\to\infty$ and $\log \DiMa/\oeDi=o(1)$
as $\ObSoNoL\to0$. Consider \ref{ad:th:ra:mm:ii}. Employing that
 for all $\TrSo\in\cwrSo$ it holds $K^{\OpSy}\oeRa\geq 16
 K[\gb_{\Di}\vee\ObSoNoL {\Di}\oEvs[\Di]]$ for all $\moeDi\leq
 \Di\leq\poeDi$ it follows that $K_\ObSoNoL\oeRa/[\gb_{\Di}\vee\ObSoNoL {\Di}
\oEvs[\Di]]\geq  16 c_2 K$ where $c_2:= K_{\ObSoNoL}/K^{\OpSy}\geq 12$ for
all $\ObSoNoL\in(0,\widetilde{\ObSoNoL}_{\OpSy})$ since $K_{\ObSoNoL}\to\infty$ as
$\ObSoNoL\to0$. Therefore, by applying
\eqref{ad:th:ra:mm:pr:e2} we have
\begin{equation*}
\sum_{\Di=\moeDi}^{\poeDi}\Ex_{\TrSo} P_{\DiRvSo|\ObSo}\big(
\HnormV{\DiRvSo[\Di]-\TrSo}^2> {K_{\ObSoNoL}}\oeRa\big)
\leq 4 \exp(-K_{\epsilon}/[12K^{\OpSy}]).
\end{equation*}
and hence from Lemma \ref{ad:le:opm} follows  for all
$\ObSoNoL\leq (\widetilde{\ObSoNoL}_{\OpSy}\wedge\ObSoNoL_{\OpSy})$
\begin{multline*}
\Ex_{\TrSo}P_{\RvDiRvSo|\ObSo}(
\HnormV{\RvDiRvSo-\TrSo}^2> K_{\ObSoNoL}\oeRa)
\\\leq 4\exp\big(-\oeDi\{C_{\Ev}/5-\log\DiMa/\oeDi\}\big) + 4\exp(-K_{\ObSoNoL}/[12K^{\OpSy}]).
\end{multline*}
Observe, that  $(\widetilde{\ObSoNoL}_{\OpSy}\wedge\ObSoNoL_{\OpSy})$
depends only on the class $\cwrSo$ and thus the upper bound given in the last display holds true
uniformly for all $\TrSo\in\cwrSo$,
which  implies the assertion
\ref{ad:th:ra:mm:ii} by using that $K_{\ObSoNoL}\to\infty$, $\oeDi\to\infty$ and $\log \DiMa/\oeDi=o(1)$
as $\ObSoNoL\to0$, and completes the proof.
\end{proof}
\subsection{Proof of Theorem \ref{ad:th:be:mm}}
\begin{lem}\label{ad:le:be:mm:rem}
If Assumption \ref{tr:as:pv} and \ref{ad:as:ev}  hold true then for all
$\TrSo\in\cwrSo$ and  $\ObSoNoL\in(0,\ObSoNoL_o)$
\begin{enumerate}[label=\emph{\textbf{(\roman*)}},ref=\emph{\textbf{(\roman*)}}]
\item\label{ad:le:be:mm:rem:i} $\sum_{j=1}^{\DiMa}\sigma_j^2\Ev_j^2\ObSoNoL^{-2}\Ex_{\TrSo}\{(\ObSo_j-\Ev_j\TrSo[j])P_{\RvDi|\ObSo}(j\leq
\RvDi\leq\DiMa)\}^2$
\\\null\hfill
$\leq\ObSoNoL \poeDi \oEvs[\poeDi]+ 10\Evs_1\exp\big(-\oeDi/5+2\log\DiMa\big)$;
\item\label{ad:le:be:mm:rem:ii}
  $\sum_{j=1}^{\DiMa}(\RvSoEr_j-\TrSo[j])^2\Ex_{\TrSo}\Ex_{\RvDi|\ObSo}\{\Ind{\{1\leq\RvDi<j\}}
  + (\sigma_j^2\RvSoVa_j^{-1})^2\Ind{\{j\leq\RvDi\leq \DiMa\}}\}+
  \sum_{j>\DiMa}(\RvSoEr_j-\TrSo[j])^2$\\\null\hfill$\leq
  \gb_{\moeDi} +\HnormV{\RvSoEr-\TrSo}^2\{d^{-2}\ObSoNoL\mEvs[\moeDi]+ 2\exp\big(-\frac{C_{\Ev}(1\vee\rSo)}{5}\oeDi+\log\DiMa\big)\}$.
\end{enumerate}
\end{lem}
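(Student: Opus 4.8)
The plan is to replay the proof of Lemma \ref{ad:le:bo:rem} almost verbatim, replacing throughout the oracle quantities $\meDi,\peDi,\treDi,\treRa$ by their minimax counterparts $\moeDi,\poeDi,\oeDi$ and $(1\vee\rSo)\oeRa$, invoking Lemma \ref{ad:le:opm} in place of Lemma \ref{ad:le:pm}, and exploiting the uniform bias control available for every $\TrSo\in\cwrSo$, namely $\gb_\Di\leq\rSo\wSo_\Di$ for all $\Di\in\Nz$ --- and in particular $\gb_{\moeDi}\leq 8L_\Ev C_\Ev(1+1/d)(1\vee\rSo)\oeRa$ by the very definition \eqref{ad:de:omp} of $\moeDi$. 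Throughout we use without further mention the elementary consequences of Assumption \ref{tr:as:pv}, in particular $\sigma_j\leq\ObSoNoL\Evs_j$ and $(\sigma_j/\RvSoVa_j)^2\leq d^{-2}\ObSoNoL\Evs_j$, as well as $\mEvs[\moeDi]\leq\mEvs[\oeDi]$.

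For part \ref{ad:le:be:mm:rem:i} I would first note that $\ObSoNo_j:=\ObSoNoL^{-1/2}(\ObSo_j-\Ev_j\TrSo[j])$, $j\geq1$, are i.i.d.\ standard normal and that, by Jensen's inequality, $\{\ObSoNo_j P_{\RvDi|\ObSo}(j\leq\RvDi\leq\DiMa)\}^2\leq\ObSoNo_j^2\,P_{\RvDi|\ObSo}(j\leq\RvDi\leq\DiMa)$. Using $\sigma_j\leq\ObSoNoL\Evs_j$ one splits the sum into $\sum_{j=1}^{\poeDi}\ObSoNoL\Evs_j\ObSoNo_j^2+\sum_{j=1}^{\DiMa}\ObSoNoL\Evs_j\ObSoNo_j^2\,P_{\RvDi|\ObSo}(\poeDi<\RvDi\leq\DiMa)$; after taking $\Ex_\TrSo$ the first block equals exactly $\ObSoNoL\poeDi\oEvs[\poeDi]$. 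For the second block one applies the Gaussian concentration inequality of Lemma \ref{ad:le:te} with $c=2/3$ to $S_{\DiMa}:=\sum_{j=1}^{\DiMa}\ObSoNoL\Evs_j\ObSoNo_j^2$ (so that $r_{\DiMa}=0$, $t_{\DiMa}=\Evs_1$ and $v_{\DiMa}=\DiMa\Evs_1$ by the definition of $\DiMa$), yielding $\Ex_\TrSo(S_{\DiMa}-2\Evs_1\DiMa)_+\leq6\Evs_1\exp(-\DiMa/6)$, and splits off $2\Evs_1\DiMa\,\Ex_\TrSo P_{\RvDi|\ObSo}(\poeDi<\RvDi\leq\DiMa)$. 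One then distinguishes the case $\poeDi=\DiMa$, where the $P_{\RvDi|\ObSo}$-term vanishes and the claim is immediate, from $\poeDi<\DiMa$, in which case the definition \eqref{ad:de:omp} of $\poeDi$ forces $\DiMa>5\oeDi$, so that Lemma \ref{ad:le:opm} \ref{ad:le:opm:ii} together with $C_\Ev(1\vee\rSo)\geq1$ lets the remainders $6\Evs_1\exp(-\DiMa/6)$ and $2\Evs_1\DiMa\exp(-C_\Ev(1\vee\rSo)\oeDi/5+\log\DiMa)$ be absorbed into $10\Evs_1\exp(-\oeDi/5+2\log\DiMa)$.

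For part \ref{ad:le:be:mm:rem:ii} I would split the first sum at $\moeDi$. For $1\leq j\leq\moeDi$ one replaces $(\sigma_j/\RvSoVa_j)^2$ by $d^{-2}\ObSoNoL\Evs_j\leq d^{-2}\ObSoNoL\mEvs[\moeDi]$ and bounds $\Ex_\TrSo\Ind{\{1\leq\RvDi<j\}}$ by $\Ex_\TrSo P_{\RvDi|\ObSo}(1\leq\RvDi<\moeDi)$, so this block is at most $\HnormV{\RvSoEr-\TrSo}^2\{\Ex_\TrSo P_{\RvDi|\ObSo}(1\leq\RvDi<\moeDi)+d^{-2}\ObSoNoL\mEvs[\moeDi]\}$; for $j>\moeDi$ one bounds the two indicator weights by $1$, so that the corresponding part is at most $\sum_{j>\moeDi}(\RvSoEr_j-\TrSo[j])^2=\gb_{\moeDi}$, which together with $\sum_{j>\DiMa}(\RvSoEr_j-\TrSo[j])^2\leq\gb_{\moeDi}$ produces the $\gb_{\moeDi}$ term. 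Inserting the bound of Lemma \ref{ad:le:opm} \ref{ad:le:opm:i}, which is precisely $2\exp(-C_\Ev(1\vee\rSo)\oeDi/5+\log\DiMa)$, then yields the assertion.

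The argument is essentially mechanical; the points that require care are, first, the constant bookkeeping in part \ref{ad:le:be:mm:rem:i}, i.e.\ verifying that the several exponential remainders collapse to the single factor $10\Evs_1\exp(-\oeDi/5+2\log\DiMa)$ claimed in the statement --- the same computation as in the proof of Lemma \ref{ad:le:bo:rem} --- and, second, uniformity over $\cwrSo$: one must check that the noise threshold $\ObSoNoL_o$ below which $\poeDi\leq\DiMa$, $\moeDi\geq1$ and all the auxiliary inequalities hold depends only on $\Ev$, $\wSo$, $\rSo$ and $d$, not on the individual $\TrSo$. Both are handled by replaying the case analyses of Lemmas \ref{ad:le:pm}, \ref{ad:le:pv} and \ref{ad:le:bo:rem} with the substitutions above; the residual dependence on $\TrSo$ is removed precisely by the uniform bias bound $\gb_\Di\leq\rSo\wSo_\Di$, which is what the restriction to the class $\cwrSo$ buys us.
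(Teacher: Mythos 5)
Your proposal is correct and follows essentially the same route as the paper's own proof, which likewise replays the argument of Lemma \ref{ad:le:bo:rem} with $\meDi,\peDi,\treDi,\treRa$ replaced by $\moeDi,\poeDi,\oeDi,(1\vee\rSo)\oeRa$, uses the same split of the sum, the same application of Lemma \ref{ad:le:te} with $c=2/3$, the same case distinction $\poeDi=\DiMa$ versus $\poeDi<\DiMa$ (forcing $\DiMa>5\oeDi$), and the same invocation of Lemma \ref{ad:le:opm}. You even cite the correct parts of Lemma \ref{ad:le:opm} (part \ref{ad:le:opm:ii} for the upper tail in assertion \ref{ad:le:be:mm:rem:i} and part \ref{ad:le:opm:i} for the lower tail in assertion \ref{ad:le:be:mm:rem:ii}), where the paper's printed proof appears to have the two references swapped.
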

\begin{proof}[\noindent{\color{darkred}\sc Proof of Lemma
    \ref{ad:le:be:mm:rem}.}]\label{ad:le:be:mm:rem:pr}
The proof follows along the lines of the proof of Lemma
\ref{ad:le:bo:rem}, where we replace
  $\meDi$, $\peDi$, $\treDi$ and $\treRa$ by its counterpart
  $\moeDi$, $\poeDi$, $\oeDi$ and $\oeRa$, respectively.

Consider \ref{ad:le:be:mm:rem:i}. 
Following the proof of \eqref{ad:le:bo:rem:pr:e1} it is
straightforward to see that
\begin{multline}\label{ad:le:be:mm:rem:pr:e1}
\sum_{j=1}^{\DiMa}\sigma_j^2\Ev_j^2\ObSoNoL^{-2}\{(\ObSo_j-\Ev_j\TrSo[j])P_{\RvDi|\ObSo}(j\leq
\RvDi\leq\DiMa)\}^2
\\
\leq \sum_{j=1}^{\poeDi}\ObSoNoL\Evs_j\ObSoNo^2_j +
\sum_{j=1}^{\DiMa}\ObSoNoL\Evs_j\ObSoNo_j^2P_{\RvDi|\ObSo}(\poeDi<
\RvDi\leq\DiMa)
\end{multline}
and following line by line the proof of \eqref{ad:le:bo:rem:pr:e2} we
conclude
\begin{multline}\label{ad:le:be:mm:rem:pr:e2}
\sum_{j=1}^{\DiMa}\ObSoNoL\Evs_j\Ex_{\TrSo}\{\ObSoNo_j^2 P_{\RvDi|\ObSo}(\poeDi<\RvDi\leq\DiMa)\}\\
\leq 6\Evs_1\exp(-\DiMa/6)+2\Evs_1\DiMa\Ex_{\TrSo}P_{\RvDi|\ObSo}(\poeDi<\RvDi\leq\DiMa).
\end{multline}
We distinguish two cases. First, if $\poeDi=\DiMa$, then assertion \ref{ad:le:be:mm:rem:i} follows  by combining
\eqref{ad:le:be:mm:rem:pr:e1} and $\Ex_{\TrSo}\ObSoNo_j^2=1$. Second, if
$\poeDi<\DiMa$, then the definition \eqref{ad:de:omp} of $\poeDi$ implies
$\DiMa>5\oeDi$ which in turn implies the assertion \ref{ad:le:be:mm:rem:i} by combining
\eqref{ad:le:be:mm:rem:pr:e1}, $\Ex_{\TrSo}\ObSoNo_j^2=1$,  \eqref{ad:le:be:mm:rem:pr:e2} and Lemma \ref{ad:le:opm} \ref{ad:le:opm:i}. Consider
\ref{ad:le:bo:rem:ii}.
Following the proof of Lemma \ref{ad:le:bo:rem} \ref{ad:le:bo:rem:ii}
we obtain
\begin{multline*}
\sum_{j=1}^{\DiMa}(\RvSoEr_j-\TrSo[j])^2\Ex_{\TrSo}\{\Ind{\{1\leq\RvDi<j\}}
  + (\sigma_j/\RvSoVa_j)^2\Ind{\{j\leq\RvDi\leq \DiMa\}}\}+ \sum_{j>\DiMa}(\RvSoEr_j-\TrSo[j])^2\\
\leq \HnormV{\RvSoEr-\TrSo}^2\{\Ex_{\TrSo}P_{\RvDi|\ObSo}(1\leq\RvDi<\moeDi)+
d^{-2}\ObSoNoL\mEvs[\moeDi]\}+\sum_{j>\moeDi}(\RvSoEr_j-\TrSo[j])^2
\end{multline*}
The assertion \ref{ad:le:be:mm:rem:ii} follows now by combining
the last estimate and Lemma \ref{ad:le:opm} \ref{ad:le:opm:ii}, which
completes the proof.
\end{proof}
\begin{proof}[\noindent{\color{darkred}\sc Proof of Theorem
    \ref{ad:th:be:mm}.}]\label{ad:th:be:mm:pr}
The proof follows line by line the proof of Theorem \ref{ad:th:bo}
using Lemma \ref{ad:le:be:mm:rem} rather than Lemma
\ref{ad:le:bo:rem}, more precisely from Lemma \ref{ad:le:be:mm:rem} follows
  \begin{multline*}
   \Ex_{\TrSo} \HnormV{\hSo-\TrSo}^2\leq \sum_{j=1}^{\DiMa}2\sigma^2\Ev^2_j\ObSoNoL^{-2}\Ex_{\TrSo}\{(\ObSo_j-\Ev_j\TrSo[j])P_{\RvDi|\ObSo}(j\leq \RvDi\leq
  \DiMa)\\ + \sum_{j=1}^{\DiMa}2(\RvSoEr_j-\TrSo[j])^2\Ex_{\TrSo}\{(\sigma_j/\RvSoVa_j)P_{\RvDi|\ObSo}(j\leq \RvDi\leq
  \DiMa)+
  P_{\RvDi|\ObSo}(1\leq\RvDi<j)\}^2+\sum_{j>\DiMa}(\RvSoEr_j-\TrSo[j])^2\\
\leq 2\{\ObSoNoL \poeDi \oEvs[\poeDi]
+10\Evs_1\exp\big(-\frac{C_{\Ev}(1\vee\rSo)}{5}\oeDi+2\log\DiMa\big)\}\\
+ 2\{\gb_{\moeDi} +\HnormV{\RvSoEr-\TrSo}^2\{d^{-2}\ObSoNoL\mEvs[\moeDi]+ 2\exp\big(-\frac{C_{\Ev}(1\vee\rSo)}{5}\oeDi+\log\DiMa\big)\}\}.
 \end{multline*}
Taking  further into account the definition \eqref{ad:de:omp} of $\moeDi$ and
$\poeDi$, we have  $\gb_{\moeDi}\leq 8L_\Ev
C_{\Ev}(1+1/d)(1\vee\rSo)\oeRa$   and (keeping in mind Assumption \ref{tr:as:mi})
$\poeDi \leq 
D^\star\oeDi$
with $D^\star:=D^\star(\cwrSo,\Ev):=\ceil{5 L_\Ev(1\vee\rSo)/\kappa}$,
which in turn implies $\ObSoNoL \poeDi \oEvs[\poeDi]\leq L_\Ev D^\star\mEvs[D^\star]\oeRa$, while
trivially
$\ObSoNoL\mEvs[\moeDi]\leq\ObSoNoL\mEvs[\oeDi]\leq\oeRa$ and $\HnormV{\RvSoEr-\TrSo}^2\leq\rSo$. By
combination of these estimates we obtain uniformly for all $\TrSo\in\cwrSo$  that
  \begin{multline*}
   \Ex_{\TrSo} \HnormV{\hSo-\TrSo}^2\leq \{2L_\Ev
   D^\star\mEvs[D^\star]+16 L_\Ev C_{\Ev}(1+1/d)(1\vee\rSo)+2
   d^{-2}\rSo\}\oeRa\\
+(20\Evs_1+ 4\rSo)\exp\big(-\frac{C_{\Ev}(1\vee\rSo)}{5}\oeDi+2\log\DiMa-\log\oeRa\big)\}\oeRa.
 \end{multline*}
Note that in the last display the multiplicative factors of $\oeRa$ depend only on the class
$\cwrSo$, the constant $d$ and the sequence $\Ev$.   Thereby, the
assertion of the theorem follows from $\log(\DiMa/\oeRa)/\oeDi \to 0$ as $\ObSoNoL\to0$ which completes the proof.
\end{proof}


\bibliography{NP-Bayes-GSSM}
\end{document}